\theoremstyle	{plain}
\newtheorem		{theorem}		{Theorem}		[section]
\theoremstyle	{definition}
\theoremstyle	{definition}
\newtheorem		{defn}			{Definition}	[section]
\newtheorem		{prop}			{Proposition}	[section]
\newtheorem		{corollary}			{Corollary} 	[section]
\theoremstyle	{remark}
\newcommand{ \mc }[1] 			{ \mathcal #1 }
\newcommand{ \mb }[1] 			{ \mathbf #1 }
\newcommand{ \mbb }[1] 			{ \mathbb #1 }
\newcommand{ \norm }[1] 		{ \Vert #1 \Vert }
\newcommand{ \floor }[1] 		{ \lfloor #1 \rfloor }
\newcommand{ \restrictedTo }[1]	{ \big|_{ \!_{ #1 } } }
\newcommand{ \nosquare } 			{ \let\qed\relax }	
\newcommand{ \ldef } 				{ \hspace{ 1 pt } \raisebox{ 0.4 pt }{:} \hspace{ -4 pt }= }
\newcommand{ \matrixEqualsCol } 	{ \hspace{ -3 mm } = \hspace{ -3 mm } } 
\newcommand{ \eps } 		{ \varepsilon }
\newcommand{ \phii } 		{ \zeta }
\newcommand{ \barr } 		{ \overline }
\newcommand{ \indicator } 	{ \mathbbm 1 }
\newcommand{ \prob } 		{ \mathbb P }
\newcommand{ \zeroComp } 		{ \varnothing }
\newcommand{ \compSet } 		{ \mc C }
\newcommand{ \upDownComp } 					{ \mb X } 		
\newcommand{ \leftMostCol }	 				{ Y } 			
\newcommand{ \transKernelComp } 			{ T }			
\newcommand{ \transKernelLeftMostCol } 		{ Q }			
\newcommand{ \distComp } 					{ M } 			
\newcommand{ \distLeftMostCol } 			{ \nu } 		
\newcommand{ \downKernel } 		{ p^\downarrow }
\newcommand{ \upKernel } 		{ p^\uparrow		_{ ( \alpha, \theta ) } }
\newcommand{ \upDownKernel } 	{ \transKernelComp 	^{ ( \alpha, \theta ) } }
\newcommand{ \upDownDist } 		{ \distComp 		^{ ( \alpha, \theta ) } }
\newcommand{ \upDownChain } 	{ \upDownComp 		^{ ( \alpha, \theta ) } }
\newcommand{ \leftMostColChain }{ \leftMostCol 		^{ ( \alpha, \theta ) } }
\newcommand{ \upKernelSym } 			{ p^\uparrow 			_{ ( \alpha, \alpha ) } }
\newcommand{ \upDownKernelSym } 		{ \transKernelComp 		^{ ( \alpha, \alpha ) } }
\newcommand{ \upDownDistSym } 			{ \distComp 			^{ ( \alpha, \alpha ) } }
\newcommand{ \upDownChainSym } 			{ \upDownComp 			^{ ( \alpha, \alpha ) } }
\newcommand{ \leftMostColDistSym } 		{ \distLeftMostCol 		^{ ( \alpha, \alpha ) } }
\newcommand{ \leftMostColChainSym } 	{ \leftMostCol 			^{ ( \alpha, \alpha ) } }
\newcommand{ \upKernelBiased } 				{ p^\uparrow				_{ ( \alpha, 0 ) } }
\newcommand{ \upDownKernelBiased } 			{ \transKernelComp 			^{ ( \alpha, 0 ) } }
\newcommand{ \leftMostColKernelBiased } 	{ \transKernelLeftMostCol 	^{ ( \alpha, 0 ) } }
\newcommand{ \upDownChainBiased } 			{ \upDownComp				^{ ( \alpha, 0 ) } }
\newcommand{ \leftMostColChainBiased } 		{ \leftMostCol 				^{ ( \alpha, 0 ) } }
\newcommand{ \leftMostColLocalProb }{ r }
\newcommand{ \upComp } 						{ \mb C^\uparrow } 		
\newcommand{ \downComp } 					{ \mb C^\downarrow } 	
\newcommand{\setOfOpenSets} 					{ \mc U }
\newcommand{ \bernsteinKernel } 				{ K }
\newcommand{ \eigenIndexSet } 					{ \mbb N \setminus \{ 1 \} }
\newcommand{ \eigenvalueGenerator } 			{ \omega }
\newcommand{ \generalGenerator } 				{ A }
\newcommand{ \generalGeneratorTwo } 			{ B }
\newcommand{ \generalSemigroup } 				{ V_t }
\newcommand{ \generalSemigroupTwo } 			{ W_t }
\newcommand{ \generalDomain } 					{ \mc E }
\newcommand{ \generalDomainTwo } 				{ \mc F }
\newcommand{ \generalSubspaceOfDomain } 		{ E }
\newcommand{ \setOfLambdaValues } 				{ I }
\titleformat*{\section}{\large\bfseries}
\renewcommand*{\@fnsymbol}[1]{\ifcase#1\or*\else\@arabic{\numexpr#1-1\relax}\fi}
\begin{document}

\title	{
			The leftmost column of ordered Chinese Restaurant Process up-down chains:
			intertwining and convergence
		}
\author{
	Kelvin Rivera-Lopez%
	\thanks{This work was supported in part by NSF grant DMS-1855568.}
	\thanks{Universit\'e de Lorraine, CNRS, IECL, F-54000 Nancy, France, kelvin.rivera-lopez@univ-lorraine.fr} 
		\and 
	Douglas Rizzolo$^*$%
	\thanks{University of Delaware, drizzolo@udel.edu}%
	}
\date{
    \today
}
\maketitle

\abstract{
    Recently there has been significant interest in constructing ordered analogues of Petrov's two-parameter extension of Ethier and Kurtz's infinitely-many-neutral-alleles diffusion model.  One method for constructing these processes goes through taking an appropriate diffusive limit of Markov chains on integer compositions called ordered Chinese Restaurant Process up-down chains.  The resulting processes are diffusions whose state space is the set of open subsets of the open unit interval. In this paper we begin to study nontrivial aspects of the order structure of these diffusions.  In particular, for a certain choice of parameters, we take the diffusive limit of the size of the first component of ordered Chinese Restaurant Process up-down chains and describe the generator of the limiting process.  We then relate this to the size of the leftmost maximal open subset of the open-set valued diffusions.  This is challenging because the function taking an open set to the size of its leftmost maximal open subset is discontinuous.   Our methods are based on establishing intertwining relations between the processes we study.
}

\section{Introduction}
\label{secintro}

The construction and analysis of ordered analogues of Petrov's \cite{Petrov09} two-parameter extension of Ethier and Kurtz's \cite{EthiKurt81} infinitely-many-neutral-alleles diffusion model has recently attracted significant interest in the literature \cite{FPRW20-3, FRSW20-2, krdr2020, RogersWinkel20,ShiWinkel20-2}.  Recall that the ${\tt EKP}(\alpha,\theta)$ diffusions constructed in \cite{Petrov09} are a family of Feller diffusions on the closure of the Kingman simplex
\[\overline{\nabla}_\infty = \left\{\mathbf{x}= (x_1,x_2,\dots) \ : \ x_1\geq x_2\geq \cdots\geq 0, \sum_{i\geq 1} x_i\leq 1\right\}\]
whose generator acts on the unital algebra generated by $\phi_m(\mathbf{x}) = \sum_{i\geq 1}x_i^m$, $m\geq 2$ by 
\[\mathcal{G} = \frac{1}{2}\left(\sum_{i=1}^\infty x_i \frac{\partial^2}{\partial x_i^2}  - \sum_{i,j=1}^\infty x_ix_j \frac{\partial^2}{\partial x_i \partial x_j} - \sum_{i=1}^\infty (\theta x_i+\alpha) \frac{\partial}{\partial x_i}\right).   \] 
In \cite{krdr2020}, for each $ \theta \geq 0 $, $ 0 \leq \alpha < 1 $, and $ \alpha + \theta > 0 $, we constructed a Feller diffusion $\upDownChain$ whose state space $ \setOfOpenSets $ is the set of open subsets of $ ( 0, 1 ) $ such that the ranked sequence of lengths of maximal open intervals in $\upDownChain$ is an ${\tt EKP}(\alpha,\theta)$ diffusion.  This was done by considering the scaling limit of up-down chains associated to the ordered Chinese Restaurant Process.  

While many interesting properties of $\upDownChain$ can be obtained from the corresponding properties for ${\tt EKP}(\alpha,\theta)$ diffusions, properties that depend on the order structure cannot be.  In this paper we begin to study nontrivial aspects of the order structure of these diffusions.  Motivated by \cite[Theorem 2 and Theorem 19]{FPRW20-1} and \cite[Theorem 5]{FPRW18}, which consider similar properties in closely related tree-valued processes, we consider the evolution of the left-most maximal open interval of $\upDownChainBiased$ in running in its $(\alpha,0)$-Poisson-Dirichlet interval partition stationarity distribution.  Recall that the  $(\alpha,0)$-Poisson-Dirichlet interval partition is the distribution of $\{t\in (0,1) : V_{1-t}>0\}$ where $V_t$ is a $(2-2\alpha)$-dimensional Bessel process started from $0$.  We prove the following result.

\begin{theorem}\label{thm main}
Define $\xi \colon  \setOfOpenSets \to [0,1]$ by $\xi(u) = \inf\{s>0 : s\in [0,1]\setminus u\} $.  If $\upDownChainBiased$ is running in its $(\alpha,0)$-Poisson-Dirichlet interval partition stationarity distribution, then $\xi(\upDownChainBiased)$ is a Feller process.  Moreover, the generator of its semigroup $ \mc L \colon \mc D\subseteq C[0,1] \to C[0, 1 ] $ is given by
    $$
    	\mc L f ( x )
    		=
    			x ( 1 - x )
    			f''( x )
    		-	\alpha
    			f'( x )
    $$

    \noindent
    for $x\in (0,1)$, where the domain $ \mc D $ of $\mc L$ consists of functions $ f $ satisfying
    \begin{enumerate}[ label = (D\arabic*) ]   
		\item
        \label{continuous image under generalized diff operator}
        $f \in C^2( 0, 1 ) $ and
        $
			\phii(x)
				=
            			x ( 1 - x )
            			f''( x )
            		-	\alpha
            			f'( x )
        $
        extends continuously to $ [ 0, 1 ] $,
		
        \item
        \label{boundary condition at zero}
        $
        	\int_0^1 
        		( f(x) - f(0) )
        		x^{ - \alpha - 1 } 
        		(1-x)^{ \alpha - 1 }
        		\,
        		dx
        			=
        				0
        			,
        $
        and
        
        \item
		\label{boundary condition at one}
		$ 
        	f'(x) 
        	( 1 - x )^\alpha 
        		\to 
    				0 
    	$ 
    	as $ x \to 1 $.
    	
    \end{enumerate}
\end{theorem}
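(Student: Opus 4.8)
\emph{Proof strategy.} The function $\xi$ is discontinuous on $\setOfOpenSets$, so $f\circ\xi$ does not lie in the domain of the generator of $\upDownChainBiased$, and one cannot read off the generator of $\xi(\upDownChainBiased)$ by applying the known generator of $\upDownChainBiased$ to functions of $\xi$. The plan is instead to route everything through \emph{intertwining}, which identifies the law of $\xi(\upDownChainBiased)$ without ever pushing $\xi$ through a continuity argument. I would carry this out in three stages: first a discrete intertwining at the level of the ordered CRP up-down chains; then a diffusive limit performed simultaneously for the full chain, the leftmost-column chain, and the link; and finally an application of the Rogers--Pitman criterion to the resulting continuous intertwining.

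\textbf{Discrete intertwining.} Recall $\upDownChainBiased$ is the diffusive limit of the $(\alpha,0)$ ordered CRP up-down chains on integer compositions. At the discrete level I would track the size of the first part of the composition and show that it is itself Markov. The mechanism is a link $\Lambda_n$, given by the conditional law of the composition given its first part under the relevant stationary (regenerative $(\alpha,0)$) composition structure, together with an explicit birth--death-type kernel $\leftMostColLocalKernel_n$ on $\{0,1,\dots,n\}$ satisfying the intertwining relation $\leftMostColLocalKernel_n\Lambda_n = \Lambda_n T_n$, where $T_n$ is the one-step up-down kernel. The verification is combinatorial: one splits the insertion and deletion steps according to whether they act on the first part or on the remainder, and uses the regenerative consistency of the ordered CRP deletion kernel to collapse the action on the remainder, leaving closed dynamics for the first part.

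\textbf{Convergence.} Rescaling the first part by the total size $n$, I would show that the rescaled leftmost-column chains converge to a Feller diffusion $Y$ on $[0,1]$ with generator $\mc L$, by computing $n(\leftMostColLocalKernel_n - I)$ on smooth test functions, identifying the interior limit $x(1-x)f''(x)-\alpha f'(x)$, and establishing tightness. The delicate point is the boundary: the boundaries $x=0$ (first part negligible, i.e.\ $0\notin u$) and $x=1$ (first part nearly the whole composition) must be classified and matched to conditions \ref{boundary condition at zero} and \ref{boundary condition at one}. The speed density of $\mc L$ is $x^{-\alpha-1}(1-x)^{\alpha-1}$, which is non-integrable at $0$; the integral identity \ref{boundary condition at zero} is precisely the non-local entrance/normalization condition selecting the correct extension there, and it must be shown to arise as the scaling limit of the re-insertion dynamics of the first part just after it vanishes. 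Simultaneously I would pass the links to the limit, obtaining a weakly continuous $\Lambda\colon [0,1]\to\mc P(\setOfOpenSets)$, where $\Lambda(x,\cdot)$ is the conditional law of the $(\alpha,0)$-Poisson--Dirichlet interval partition given that its leftmost maximal interval has length $x$; in particular $\Lambda(x,\cdot)$ is supported on $\{u : \xi(u)=x\}$. Passing $\leftMostColLocalKernel_n\Lambda_n = \Lambda_n T_n$ to the limit, using the Feller convergence of the full chains from \cite{krdr2020}, yields the continuous intertwining $Q_t\Lambda = \Lambda P_t$, where $P_t,Q_t$ are the semigroups of $\upDownChainBiased$ and $Y$.

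\textbf{Conclusion and main obstacle.} Since $\upDownChainBiased$ runs in its $(\alpha,0)$-PDIP stationary law, this law disintegrates as $\int \Lambda(x,\cdot)\,\mu(dx)$ over the marginal law $\mu$ of $\xi$, so the conditional law of the state given $\xi$ equals $\Lambda$; this initial compatibility together with $Q_t\Lambda=\Lambda P_t$ is exactly the hypothesis of the Rogers--Pitman criterion, giving that $\xi(\upDownChainBiased)$ is Markov with semigroup $Q_t$, hence Feller with generator $(\mc L,\mc D)$. I expect the hard part to be the boundary analysis at $x=0$ and its compatibility with the link: this is precisely where $\xi$ is discontinuous (the leftmost interval born from, or collapsing to, nothing). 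Proving that the scaling limit produces exactly the non-local condition \ref{boundary condition at zero}, that $(\mc L,\mc D)$ generates a Feller semigroup, and that $x\mapsto\Lambda(x,\cdot)$ is weakly continuous up to and including $x=0$ despite the jump in $\xi$, is where the genuine work lies; the interior generator computation and the upper boundary condition \ref{boundary condition at one} should be comparatively routine.
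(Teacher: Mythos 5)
Your discrete stage coincides with the paper's: the link $\Lambda_n(i,(i,\sigma)) = \upDownDistSym_{n-i}(\sigma)$ and the relation $\leftMostColKernelBiased_n\Lambda_n = \Lambda_n\upDownKernelBiased_n$ are exactly Proposition \ref{propIntertwining}, proved there by the same first-part/remainder case analysis. Your endgame, however, is genuinely different from the paper's. The paper never constructs a continuum link and never applies Rogers--Pitman to $\upDownChainBiased$ itself; Rogers--Pitman enters only at the discrete level. Instead, the proof of Theorem \ref{thm main} is pathwise: in stationarity each time-marginal of the limit $F$ of the rescaled leftmost columns is $\mathrm{Beta}(1-\alpha,\alpha)$, hence a.s.\ strictly positive, and combining the Skorokhod representation theorem, the absence of fixed discontinuities for the Feller process $F$, and the one-sided continuity fact that $u_n\to u$ in $\setOfOpenSets$ together with $\xi(u_n)\to c>0$ force $\xi(u)=c$, one gets $F(t)=\xi(\upDownChainBiased(t))$ a.s.\ for each fixed $t$; thus $\xi(\upDownChainBiased)$ has the finite-dimensional distributions of $F$ and inherits its Feller semigroup. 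Your alternative --- prove $Q_t\Lambda=\Lambda P_t$ for the limiting link $\Lambda(x,\cdot)$ given by the law of $(0,x)\cup\bigl(x+(1-x)\bar U\bigr)$ with $\bar U$ an $(\alpha,\alpha)$-PDIP, then invoke Rogers--Pitman --- looks viable (the support condition and the disintegration of the stationary law both hold, by the last-exit decomposition of the Bessel process), and it would buy something the paper does not prove: Markovianity of $\xi(\upDownChainBiased)$ from \emph{any} initial law of the form $\mu\Lambda$. The price is infrastructure the paper deliberately avoids: weak continuity of $x\mapsto\Lambda(x,\cdot)$ and a limiting argument upgrading the discrete intertwining to the continuum semigroup identity, with the attendant uniformity requirements on the convergence of $\Lambda_n$, of $T_n^{\lfloor n^2t\rfloor}$ (from \cite{krdr2020}), and of the leftmost-column semigroups. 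Both routes still need the analytic core of Section \ref{section limit of generator}, namely that $(\mc L,\mc D)$ generates a Feller semigroup; you acknowledge this.

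The concrete gap is in your convergence stage. (Minor: the generator scaling is $n(n+1)$, i.e.\ time sped up by $n^2$, not $n$.) More seriously, the plan to ``compute the rescaled generator on smooth test functions, identify the interior limit, and establish tightness'' cannot be executed in that order, because on a generic smooth $f$ the rescaled generator does not converge at all: it diverges at the boundary state. At $i=1$ the kernel of Proposition \ref{propIntertwining} carries the re-insertion term $\leftMostColLocalProb_{1,0}\,\leftMostColDistSym_n(j)$, and since $n(n+1)\leftMostColLocalProb_{1,0}\sim n$ while $\sum_{j=1}^n\leftMostColDistSym_n(j)\bigl(f(j/n)-f(0)\bigr)$ is of order $n^{-\alpha}\,\eta(f)$ up to a positive constant (use $\leftMostColDistSym_n(j)\approx \tfrac{\alpha}{\Gamma(1-\alpha)}\,n^{1-\alpha}j^{-1-\alpha}(n-j)^{\alpha-1}$ and a Riemann sum), the boundary entry of the rescaled generator blows up like $n^{1-\alpha}\eta(f)$ unless $\eta(f)=0$, i.e.\ unless $f$ already satisfies \ref{boundary condition at zero}. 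So there is no separation between ``interior limit'' and ``boundary condition'': generator convergence holds only on test functions in $\mc H=\ker\eta$, and this space must be taken as the core from the outset. This is exactly what the paper's Bernstein kernel accomplishes: the range of $\bernsteinKernel_n$ is $\mc H_n=\mc H\cap\mc P_n$ (Proposition \ref{bernsteinKernelAndFiltration}), the identity $\mc B\bernsteinKernel_n=\bernsteinKernel_n\,n(n+1)(\leftMostColKernelBiased_n-\mb 1)$ of Proposition \ref{leftMostChainAndLimitIntertwining} is exact with no error term, and Theorem \ref{markovConvergenceFromIntertwining} (Ethier--Kurtz) then yields semigroup and path convergence with no separate tightness argument. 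If you replace ``smooth test functions'' by ``functions in $\ker\eta$'' (equivalently, adopt $\bernsteinKernel_n$ or some analogue as your comparison operator), your Stage 2 is repaired and the rest of your plan can proceed.
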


We consider only the $(\alpha,0)$ case because the known stationary distribution of $\upDownChain$ is an $(\alpha,\theta)$-Poisson-Dirichlet interval partition and, except in the $(\alpha,0)$ case, with probability 1 interval partitions with these distributions do not have left-most maximal open intervals.  We remark that our theorem statement could be slightly simpler if we knew that $\upDownChain$ had a unique stationary distribution, but this is currently an open problem.

Our proof is based on taking the scaling limit of the left-most coordinate in an up-down chain on compositions based on the ordered Chinese Restaurant Process, which are the same chains that were used in \cite{krdr2020} to construct $\upDownChainBiased$.  

\begin{defn}

	For $ n \ge 1 $, a \textit{composition} of $ n $ is a tuple $ \sigma = ( \sigma_1, ..., \sigma_k ) $ of positive integers that sum to $ n $. 
	The composition of $ n = 0 $ is the empty tuple, which we denote by $ \zeroComp $. 
	We write $ | \sigma | = n $ and $ \ell( \sigma ) = k $ when $ \sigma $ is a composition of $ n $ with $ k $ components.
	We denote the set of all compositions of $ n $ by $ \compSet_n $ and their union by 
	$ 
		\compSet 
			= 
				\cup_{ n \ge 0 } \, 
					\compSet_n 
	$.
\end{defn}

An up-down chain on $\compSet_n$ is a Markov chain whose steps can be factored into two parts: 
	1) an up-step from $ \compSet_n $ to $ \compSet_{n+1} $ according to a kernel $p^\uparrow$ followed by 
	2) a down-step from $ \compSet_{n+1} $ to $ \compSet_{n} $ according to a kernel $p^\downarrow$.  
The probability $ T_n(\sigma,\sigma') $ of transitioning from $\sigma$ to $\sigma'$ can then be written as
	\begin{equation}
	\label{eq tupdown} 
		T_n(\sigma,\sigma') 
			= 
				\sum_{\tau\in \compSet_{n+1}} 
					p^\uparrow(\sigma,\tau)
					p^\downarrow(\tau,\sigma').
	\end{equation}
Up-down chains on compositions, and more generally, on graded sets, have been studied in a variety of contexts \cite{BoroOlsh09,FPRW20-1,Fulman09-1,Fulman09-2,RossGan20,Petrov09,Petrov13}, often in connection with their nice algebraic and combinatorial properties.

In the up-down chains we considered, the up-step kernel $\upKernel$ is given by an $(\alpha,\theta)$-ordered Chinese Restaurant Process growth step \cite{PitmWink09}. 
In the Chinese Restaurant Process analogy, we view $\tau=(\tau_1,\dots,\tau_k) \in \compSet_n$ as an ordered list of the number of customers at $k$ occupied tables in a restaurant, so that $\tau_i$ is the number of customers at the $i^{th}$ table on the list.  
During an up-step, a new customer enters the restaurant and chooses a table to sit at according to the following rules:
\begin{itemize}

	\item 
	The new customer joins table $i$ with probability $(\tau_i-\alpha)/(n+\theta)$, resulting in a step from $\tau$ to $(\tau_1,\dots,\tau_{i-1},\tau_i+1,\tau_{i+1},\dots,\tau_k)$.

	\item 
	The new customer starts a new table directly after table $i$ with probability $\alpha/(n+\theta)$, resulting in a step from $\tau$ to $(\tau_1,\dots,\tau_{i-1},\tau_i, 1,\tau_{i+1},\dots,\tau_k)$.
	
	\item 
	The new customer starts a new table at the start of the list with probability $\theta/(n+\theta)$, resulting in a step from $\tau$ to $(1, \tau_1,\tau_2\dots,\tau_k)$.

	\end{itemize}
We note that, for consistency with \cite{FPRW20-3, FRSW20-2}, this up-step is the left-to-right reversal of the growth step in \cite{PitmWink09}.

The down-step kernel $\downKernel$ we consider can also be thought of in terms of the restaurant analogy. 
During a down-step, a seated customer gets up and exits the restaurant according to the following rule: 
\begin{itemize}
\item 
The seated customer is chosen uniformly at random, resulting in a step from $\tau$ to 
\linebreak[4]
$
	( \tau_1, \ldots, \tau_{i-1}, \tau_i - 1, \tau_{ i + 1 }, \ldots, \tau_k )
$
with probability $\tau_i/n$ (the $i^{th}$ coordinate is to be contracted away if $\tau_i-1=0$, or if the $ i^{th} $ table is no longer occupied). 
\end{itemize}
Note that, in contrast to the up-step, the down-step does not depend on $(\alpha,\theta)$.

Let $ (\upDownChain_n(k))_{k\geq 0} $ be a Markov chain on $\compSet_n$ with transition kernel $ \upDownKernel_n$ defined as in Equation \eqref{eq tupdown} using the $\upKernel$ and $\downKernel$ just described. 
A Poissonized version of this chain was considered in \cite{RogersWinkel20,ShiWinkel20-2}.  
It can be shown that $ \upDownChain_n $ is an aperiodic, irreducible chain.  
We denote its unique stationary distribution by $\upDownDist_n$ and note that this is the left-to-right reversal of the $(\alpha,\theta)$-regenerative composition structures introduced in \cite{GnedPitm05}.

The projection $ \phi( \sigma ) = \sigma_1 $ for $ \sigma \neq \zeroComp $ gives rise to the leftmost column processes, defined by 
$ \leftMostColChain_n = \phi( \upDownChain_n ) $.
Let $ \distLeftMostCol_n^{ ( \alpha, \theta ) } = \upDownDist_n \circ \phi^{ -1 } $, the distribution of the leftmost column when the up-down chain is in stationarity.
The following result, interesting in its own right, is a key step in our proof of Theorem \ref{thm main}.

\begin{theorem}
\label{full statement of convergence result}
	For $ n \ge 1 $, let $ \mu_n $ be a distribution on $ \{ 1, \ldots, n \} $.
	Then, for all $ n $, the up-down chain $ \upDownChainBiased_n $ can be initialized so that
	$ \leftMostColChain_n $ is a Markov chain with initial distribution $ \mu_n $.
	Moreover, for any such sequence of initial conditions for $ \upDownChainBiased_n $, if the sequence
	$
		\{
		n^{ -1 }
		\leftMostColChainBiased_n ( 0 )
		\}_{ n \ge 1 }
	$
	has a limiting distribution $ \mu $, then we have the convergence
	$$
		\left(n^{ -1 }
			\leftMostColChainBiased_n (\floor{ n^2 t } )\right)_{t\geq 0}
			\Longrightarrow
				(F( t ))_{t\geq 0}
	$$
	in the Skorokhod space $ D( [ 0, \infty ), [ 0, 1 ] ) $, 
	where $ F $ is a Feller process with generator $ \mc L $ (as in Theorem \ref{thm main}) and initial distribution $ \mu $.
\end{theorem}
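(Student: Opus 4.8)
The plan is to realize $\leftMostColChainBiased_n$ as a Markov chain in its own right through an intertwining, to read off its transition kernel explicitly, and then to prove a generator-convergence statement of the standard Markov-chain-to-Feller-process type, the only genuinely delicate point being the behaviour at the left boundary $x=0$. For the first assertion I would use a Rogers--Pitman-type intertwining criterion. For each $m\in\{1,\dots,n\}$ let $\Lambda_n(m,\cdot)$ be the conditional law of the full composition given that its first part equals $m$, computed under the $(\alpha,0)$-regenerative structure; by construction $\Lambda_n(m,\cdot)$ is supported on $\phi^{-1}(m)=\{\sigma:\sigma_1=m\}$. The heart of this step is to verify the intertwining identity $\Lambda_n\,\upDownKernelBiased_n=\leftMostColKernelBiased_n\,\Lambda_n$ for an explicit kernel $\leftMostColKernelBiased_n$ on $\{1,\dots,n\}$; granting it, initializing $\upDownChainBiased_n$ from $\int\mu_n(dm)\,\Lambda_n(m,\cdot)$ makes $\leftMostColChainBiased_n=\phi(\upDownChainBiased_n)$ a Markov chain with kernel $\leftMostColKernelBiased_n$ and initial law $\mu_n$, for arbitrary $\mu_n$. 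The observation that makes this work is that, because $\theta=0$, no table is ever created to the left of the first one, so the first coordinate evolves autonomously except at the single value $\sigma_1=1$, where, if the lone customer departs, the new leftmost column is $\sigma_2$; it is exactly the regenerative (deletion) property of the $(\alpha,0)$ composition structure that makes $\Lambda_n$ self-reproducing under the dynamics and hence yields the Markov property.

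Next I would compute $\leftMostColKernelBiased_n$ from the up-step and down-step rules. A direct calculation shows that for $2\le m\le n$ the chain is birth--death with $\leftMostColKernelBiased_n(m,m+1)=(m-\alpha)(n-m)/(n(n+1))$ and $\leftMostColKernelBiased_n(m,m-1)=(n-m+\alpha)m/(n(n+1))$, while from $m=1$ the move that would decrease the coordinate is instead a jump to a value distributed as $\sigma_2$ under $\Lambda_n(1,\cdot)$. Writing $x=m/n$ and $f_n(m)=f(m/n)$ for $f\in C^2(0,1)$, a Taylor expansion together with the exact identity $\leftMostColKernelBiased_n(m,m+1)-\leftMostColKernelBiased_n(m,m-1)=-\alpha/(n+1)$ gives, uniformly on compact subsets of $(0,1)$, that $n^2(\leftMostColKernelBiased_n-I)f_n(m)\to x(1-x)f''(x)-\alpha f'(x)=\mc Lf(x)$. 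Thus the bulk generator converges to $\mc L$, and the analogous computation at $m=n$, where only downward moves of rate $\alpha/(n+1)$ occur, produces the natural right-boundary condition \ref{boundary condition at one}.

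The main obstacle is the left boundary. After the $n^2$ time change the jump out of the state $m=1$ occurs at rate of order $n$, which does not vanish, and the landing point $\sigma_2/n$ is macroscopic, so one cannot pass to the limit in $n^2(\leftMostColKernelBiased_n-I)$ uniformly on all of $C[0,1]$. The correct route is to show, from the explicit form of the $(\alpha,0)$ regenerative structure, that as $n\to\infty$ the law of $\sigma_2/n$ under $\Lambda_n(1,\cdot)$ together with the escape rate combine so that the boundary contribution, tested against $f$, converges to a multiple of $\int_0^1\bigl(f(x)-f(0)\bigr)\,x^{-\alpha-1}(1-x)^{\alpha-1}\,dx$. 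Requiring this to vanish is precisely condition \ref{boundary condition at zero}, and it is what renders the limit finite despite the diverging rate and the non-integrable weight near $0$ (the factor $f(x)-f(0)\sim f'(0)x$ supplying the extra power of $x$ that makes the integral converge since $\alpha<1$). This asymptotic identification is the technical crux; once it is in place, $n^2(\leftMostColKernelBiased_n-I)f_n\to\mc Lf$ uniformly on $[0,1]$ for $f$ in a suitable core of $(\mc L,\mc D)$.

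Finally I would assemble the pieces. To apply the semigroup/generator criterion for convergence of Markov chains (in the style of Ethier--Kurtz), I need $(\mc L,\mc D)$ to generate a Feller semigroup on $C[0,1]$; I would verify this by Hille--Yosida, solving the resolvent equation $(\lambda-\mc L)f=g$ as a second-order ODE subject to \ref{continuous image under generalized diff operator}--\ref{boundary condition at one}, checking that $\mc D$ is dense, that $\mc L$ satisfies the positive maximum principle, and that the range condition holds, and exhibiting a core on which the uniform convergence of the previous step is valid. Combining generator convergence on this core with the assumed convergence $n^{-1}\leftMostColChainBiased_n(0)\Rightarrow\mu$ of initial laws then yields the asserted convergence of $\bigl(n^{-1}\leftMostColChainBiased_n(\floor{n^2t})\bigr)_{t\ge0}$ to the Feller process $F$ with generator $\mc L$ and initial law $\mu$ in $D([0,\infty),[0,1])$.
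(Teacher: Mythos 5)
Your first two paragraphs coincide with the paper's own argument (Section \ref{section leftmost column}): the kernel you describe --- the conditional law of the composition given its first part, which by the regeneration of the $(\alpha,0)$ structure into an $(\alpha,\alpha)$ remainder is exactly $\Lambda_n(i,(i,\sigma))=\upDownDistSym_{n-i}(\sigma)$ --- is the paper's intertwining kernel; the identity $\Lambda_n\upDownKernelBiased_n=\leftMostColKernelBiased_n\Lambda_n$ is Proposition \ref{propIntertwining}; the Markov property follows from Rogers--Pitman exactly as you say; and your birth--death rates with the regenerative jump out of $m=1$ agree with the paper's explicit kernel. Where you genuinely depart from the paper is the convergence step. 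The paper never Taylor-expands: it proves the \emph{exact} algebraic identity $\mc B\,\bernsteinKernel_n=\bernsteinKernel_n\,n(n+1)(\leftMostColKernelBiased_n-\mb 1)$ on $C([n])$ for every finite $n$ (Proposition \ref{leftMostChainAndLimitIntertwining}), using the Bernstein kernel and the core $\mc H=\ker\eta$ spanned by Jacobi polynomials, and the only asymptotic input is the soft convergence $(\bernsteinKernel_n^{-1}-\rho_n)f\to0$ via degenerate Bernstein polynomials (Proposition \ref{intertwiningOperatorAndProjectionConvergence}). In effect the paper takes $f_n=\bernsteinKernel_n^{-1}f$ where you take $f_n=\pi_nf$, and with that choice the boundary difficulty you wrestle with never arises.

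Within your route there is a genuine gap, located exactly at your self-identified ``technical crux,'' and the claim as you state it is not correct. Testing the discrete generator at $m=1$ against $\pi_nf$, the regenerative jump contributes
\begin{equation*}
	n(n+1)\,\leftMostColLocalProb_{1,0}\sum_{j}\leftMostColDistSym_n(j)\bigl(f(j/n)-f(1/n)\bigr)
		\;\sim\;
			n\,E\bigl[f(J_n/n)-f(0)\bigr],
	\qquad J_n\sim\leftMostColDistSym_n .
\end{equation*}
This does \emph{not} converge to a multiple of $\eta(f)=\int_0^1(f(x)-f(0))x^{-\alpha-1}(1-x)^{\alpha-1}dx$: because $\leftMostColDistSym_n$ has a power-law profile of index $\alpha$, one has $E[f(J_n/n)-f(0)]\asymp n^{-\alpha}\eta(f)$, so after multiplication by $n$ the boundary term diverges like $n^{1-\alpha}\eta(f)$ whenever $\eta(f)\neq0$. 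Thus condition \ref{boundary condition at zero} is not a condition making a finite limit vanish; it is the condition under which the \emph{divergent leading term} cancels. More importantly, on the core $\ker\eta$ cancellation of the leading term is not enough for the Ethier--Kurtz criterion: you must show the remainder is $o(1)$, i.e. $n\,E[f(J_n/n)-f(0)]\to0$ for every polynomial $f$ with $\eta(f)=0$, and this requires moment asymptotics of $\leftMostColDistSym_n$ accurate to relative error $o(n^{\alpha-1})$, not merely the leading order. This can in fact be done --- e.g. a Vandermonde summation gives the exact factorial moments
$E[J_n^{\downarrow k}]
	=\frac{\alpha\,\Gamma(\alpha)\Gamma(k-\alpha)}{\Gamma(1-\alpha)\Gamma(k)}
	\cdot\frac{n^{\downarrow k}\,\Gamma(n)}{\Gamma(n+\alpha)}$,
whence $E[(J_n/n)^k]=\frac{\alpha}{\Gamma(1-\alpha)}\frac{\Gamma(k-\alpha)\Gamma(\alpha)}{\Gamma(k)}\,n^{-\alpha}+O(n^{-1-\alpha})$, which suffices --- but none of this appears in your proposal, and without it the uniform generator convergence you feed into Ethier--Kurtz is unproven.

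A secondary, smaller issue: your Hille--Yosida step (``solving the resolvent equation as a second-order ODE subject to \ref{continuous image under generalized diff operator}--\ref{boundary condition at one}'') glosses over the fact that \ref{boundary condition at zero} is a nonlocal integral condition, so the range condition is not a routine two-point boundary-value problem. The paper sidesteps this as well: on $\mc H=\ker\eta$ the operator is diagonal in the Jacobi basis, so the range condition is immediate (Proposition \ref{existence of the generator}), and the identification of the closure's domain with $\mc D$ is carried out separately (Proposition \ref{description of generator and domain}).
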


While there are many ways to prove a result like Theorem \ref{full statement of convergence result}, we take an approach based on the algebraic properties of the ordered Chinese Restaurant Process up-down chains.  In particular, our proof is based on the following surprising intertwining result.
For a positive integer $ i $ and composition $ \sigma $, we use the notation $ ( i, \sigma ) $ as a shorthand for the composition $ ( i, \sigma_1, \sigma_2, \ldots, \sigma_{ \ell(\sigma)} ) $.

\begin{theorem}
\label{full statement of all intertwining results}
    For $ n \ge 1 $, let $ \Lambda_n $ be the transition kernel from $ \{1,\dots, n \} $ to $ \compSet_n $ given by 
    $$
    	\Lambda_n( i, ( i, \sigma ) ) 	
    			= 
    				\upDownDistSym_{ n - i }( \sigma )
    			,
    $$

	\noindent
	and let $ \bernsteinKernel_n $ be the transition kernel from $ [ 0, 1 ] $ to $ \{1,\dots, n \} $ given by
    $$
    	\bernsteinKernel_n( x, i )
    		=
    			\binom{ n }{ i }
    			x^i
    			( 1 - x )^{ n - i }
    		+	\leftMostColDistSym_n( i )
    			( 1 - x )^n
    		.
    $$
	
	\noindent
    If the initial distribution of
    $ \upDownComp_n^{ ( \alpha, 0 ) } $
    is of the form $ \mu \Lambda_n $ for some distribution $ \mu $ on $ \{1,\dots, n \}  $, then the process 
	$
		\leftMostCol_n^{ ( \alpha, 0 ) } 
	$
	is Markovian.
	In this case, the following intertwining relations hold:
    \begin{enumerate}[ label = (\roman*) ]
    	\item
        $
        	\Lambda_n \upDownKernelBiased_n 	
        		= 	
        				\leftMostColKernelBiased_n \Lambda_n 	
    			,
        $
    	where $ \transKernelLeftMostCol_n^{ ( \alpha, 0 ) } $ is the transition kernel of $ \leftMostCol_n^{ ( \alpha, 0 ) } $, and
	
		\item
        $
        	\bernsteinKernel_n
			e^{ t n ( n + 1 ) ( \transKernelLeftMostCol_n^{ ( \alpha, 0 ) } - \mb 1 ) }
        		= 	
        			U_t
					\bernsteinKernel_n
        $
        for $ t \ge 0 $,
    	where $ U_t $ is the semigroup generated by the operator $ \mc L $ defined in Theorem \ref{thm main}
	and $ \mb 1 $ denotes the identity operator.
	
	\end{enumerate}
	
\end{theorem}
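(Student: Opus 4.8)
The plan is to derive everything from the intertwining (i): once (i) is in hand, the asserted Markovianity of $\leftMostCol_n^{(\alpha,0)}$ follows from the standard criterion for functions of Markov chains (Rogers--Pitman). Indeed $\Lambda_n$ is supported on the fibers of the projection $\phi$, since $\Lambda_n(i,\cdot)$ charges only compositions whose first part equals $i$, so the fiber/lumpability hypothesis holds; combined with $\Lambda_n \upDownKernelBiased_n = \leftMostColKernelBiased_n \Lambda_n$ this forces $\phi(\upDownComp_n^{(\alpha,0)}) = \leftMostCol_n^{(\alpha,0)}$ to be Markov with kernel $\leftMostColKernelBiased_n$ whenever the initial law is $\mu\Lambda_n$. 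So the real content is the two intertwinings, and their proofs rest on three structural facts about the symmetric family: the left-regeneration property $\upDownDistSym_n(i,\sigma) = \leftMostColDistSym_n(i)\,\upDownDistSym_{n-i}(\sigma)$, together with the one-step growth and deletion consistencies $\upDownDistSym_m \upKernelSym = \upDownDistSym_{m+1}$ and $\upDownDistSym_{m+1}\downKernel = \upDownDistSym_m$ of the ordered Chinese Restaurant Process.

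To prove (i) I would track the leftmost column through a single up--down step issued from a state with law $\Lambda_n(i,\cdot)$, i.e.\ $(i,\sigma)$ with $\sigma\sim\upDownDistSym_{n-i}$. Writing $m=n-i$, the decisive computation is that the $(\alpha,0)$ up-step, \emph{conditioned on the new customer not joining the first table}, acts exactly as the $(\alpha,\alpha)$ up-step $\upKernelSym$ on the remainder $\sigma$: here $\theta=0$ forbids insertions to the left of the first table, while the insertion slot immediately after the first table supplies precisely the $\theta=\alpha$ leading-insertion probability $\alpha/(m+\alpha)$ for $\sigma$. Dually, the down-step conditioned on not removing a customer from the first table is the parameter-free $\downKernel$ on the remainder. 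Feeding $\sigma\sim\upDownDistSym_m$ through these conditional dynamics and invoking the two consistencies keeps the remainder exactly $\upDownDistSym$-distributed at the correct level in every branch. Enumerating the few branches (join/leave the first table on the up/down half-steps) shows the output law is always of the form $\Lambda_n(j,\cdot)$ for the realized new first part $j$, and collecting branch probabilities yields $\leftMostColKernelBiased_n$ as a birth--death kernel on $\{1,\dots,n\}$ with birth rate proportional to $(i-\alpha)(n-i)$ and death rate proportional to $(n-i+\alpha)i$ (after the $n(n+1)$ factor). The one delicate branch is $i=1$, where the up--down step can empty and delete the first table; the surviving composition then has law $\upDownDistSym_n$, so its new first part is distributed as $\leftMostColDistSym_n$ and the rest is again $\upDownDistSym$-distributed by left-regeneration -- exactly the regeneration later visible in the $\leftMostColDistSym_n(i)(1-x)^n$ term of $\bernsteinKernel_n$, and the step where the left-regeneration property is indispensable.

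For (ii) the plan is to pass through the generator. Writing $G_n = n(n+1)(\leftMostColKernelBiased_n - \mb 1)$ for the generator of the $\floor{n^2 t}$-rescaled leftmost column, I would first prove the operator identity $\bernsteinKernel_n G_n = \mc L\,\bernsteinKernel_n$ on functions on $\{1,\dots,n\}$ by evaluating $\mc L$ on the Bernstein basis $x\mapsto\binom{n}{i}x^i(1-x)^{n-i}$ and on the correction term $(1-x)^n$, matching the finite differences produced by $G_n$; the continuum coefficients $-\alpha f'$ and $x(1-x)f''$ are exactly the limits of the birth--death rates found above. Second, I would verify that every $\bernsteinKernel_n g$ lies in $\mc D$, i.e.\ satisfies \ref{continuous image under generalized diff operator}--\ref{boundary condition at one}: the smoothness and the condition \ref{boundary condition at one} at $1$ are routine for these Bernstein-type functions, but the nonlocal condition \ref{boundary condition at zero} should reduce, via the Beta integrals $\int_0^1 x^{i-\alpha-1}(1-x)^{\alpha-1}\,dx$, to an identity characterizing the moments of $\leftMostColDistSym_n$. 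With the generator identity and domain-invariance in hand, I would lift to the semigroup identity by the standard interpolation argument: for fixed $s$ the map $t\mapsto U_{s-t}\,\bernsteinKernel_n e^{tG_n}g$ is differentiable with vanishing derivative, because $e^{tG_n}g$ is again a function on the finite set, so $\bernsteinKernel_n e^{tG_n}g$ stays in the invariant Bernstein class $\subseteq\mc D$ and the two generators cancel; hence its endpoints agree.

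I expect the main obstacle to be in (ii): establishing the nonlocal boundary identity \ref{boundary condition at zero} for the Bernstein images while simultaneously controlling the domain so that the semigroup lift is legitimate -- in particular matching the discrete regeneration carried by $\leftMostColDistSym_n$ with the analytic boundary condition at the origin. By comparison the combinatorics of (i) is intricate but essentially mechanical once the regeneration and consistency identities have been recorded.
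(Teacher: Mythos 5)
Your proposal is correct, and for the Markov property and relation (i) it is essentially the paper's own argument: Proposition \ref{propTransitionRecurrence} is exactly your conditional decomposition (given $U^c$, the part of the composition to the right of the first column performs an $(\alpha,\alpha)$ up-step; given $D^c$, it performs a $\downKernel$ down-step), and Proposition \ref{propIntertwining} then pushes $\Lambda_n$ through the branches using the consistency identities \eqref{eqnConsistency} and the regeneration identity \eqref{eqnCondDistOfRightStructure} --- including your delicate $i=1$ branch, which is the source of the $\leftMostColLocalProb_{1,0}\,\leftMostColDistSym_n(j)$ term --- with Markovianity deduced from Rogers--Pitman exactly as you say.

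For (ii), you prove the same generator identity as the paper, $\mc B\, \bernsteinKernel_n = \bernsteinKernel_n\, n(n+1)(\leftMostColKernelBiased_n - \mb 1)$ (Proposition \ref{leftMostChainAndLimitIntertwining}), but you lift it to semigroups by a genuinely different route: your interpolation argument, differentiating $t \mapsto U_{s-t}\bernsteinKernel_n e^{tG_n}g$ with $G_n = n(n+1)(\leftMostColKernelBiased_n - \mb 1)$, versus the paper's resolvent/Yosida-approximation machinery (Theorem \ref{thmSemigroupRelationFromGenerator} and Corollary \ref{generator to semigroup relation with finite filtration}). Your route is shorter and self-contained, needing only that the finite-dimensional class $\mathrm{range}\,\bernsteinKernel_n$ sits inside $\mc D$ and that $(\mc L, \mc D)$ generates $U_t$ (which the statement grants, and which the paper proves separately in Section \ref{section limit of generator}); the paper's route instead produces a reusable Banach-space lemma and avoids the differentiability bookkeeping.

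One step you should not treat as mechanical: when both sides of the generator identity are expanded in the basis $\{b_{k,n}\}_{k=0}^n$, the coefficients of $b_{k,n}$ for $k\ge 1$ do match termwise from the local probabilities \eqref{yLocalProbDef}, but the coefficient of $b_{0,n}$ does \emph{not} match by inspection --- equality there encodes a nontrivial identity for $\leftMostColDistSym_n$ under the regeneration kernel. The paper sidesteps any such computation: by Proposition \ref{bernsteinKernelAndFiltration}, both sides of the identity lie in $\mc H_n = \mathrm{range}\,\bernsteinKernel_n$, and membership in $\mc H_n$ forces the $b_{0,n}$-coefficient to be the $\leftMostColDistSym_n$-average of the remaining coefficients (see \eqref{HnAsDiscreteKernel}), so agreement in the coordinates $k \ge 1$ suffices. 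Your planned Beta-integral verification of condition \ref{boundary condition at zero} for the Bernstein images is precisely the computation establishing this membership, so your outline closes once you invoke it at this step rather than attempting a direct computation of the $b_{0,n}$-coefficient.
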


This paper is organized as follows. 
In Section \ref{section leftmost column}, we show that the $ ( \alpha, 0 ) $ leftmost column process is intertwined with its corresponding up-down chain and describe its transition kernel explicitly.
This establishes part of Theorem \ref{full statement of all intertwining results}.
In Section \ref{section general convergence from commutation}, we state a condition under which the convergence of Markov processes can be obtained from some commutation relations involving generators.
In Section \ref{section limit of generator}, we analyze the generator of the limiting process.
In Section \ref{section intertwining leftmost column with limit}, we show that our generators satisfy the commutation relations appearing in the result of Section \ref{section general convergence from commutation}.
In Section \ref{section convergence argument}, we verify the convergence condition appearing in the result in Section \ref{section general convergence from commutation}. 
In Section \ref{section semigroup relation from generator relation}, we provide general conditions under which commutation relations involving generators lead to the corresponding relations for their semigroups. 
Finally in Section \ref{section proofs of main results}, we prove Theorems \ref{thm main}, \ref{full statement of convergence result}, and \ref{full statement of all intertwining results}.


The following will be used throughout this paper.
For a compact topological space $ X $, we denote by $ C( X ) $ the space of continuous functions from $ X $ to $ \mbb R $ equipped with the supremum norm.
Finite topological spaces will always be equipped with the discrete topology.
Any sum or product over an empty index set will be regarded as a zero or one, respectively.
The set of positive integers $ \{ 1, ..., k \} $ will be denoted by $ [ k ] $.
The falling factorial will be denoted using \emph{factorial exponents} -- that is,
$ 
	x^{ \downarrow b } 
		= 
			x ( x - 1 ) 
			\cdot \ldots \cdot 
			( x - b + 1 ) 
$ 
for a real number $ x $ and nonnegative integer $ b $,
and
$ 
	0^{ \downarrow 0 } 
		= 
			1 
$
by convention.
The rising factorial will be denoted by $ (x)_b = x(x+1)\cdots (x+b-1)$.
We denote the gamma function by $ \Gamma ( x ) $.
Multinomial coefficients will be denoted using the shorthand
$$
	\binom	{ | \sigma | }{ \sigma } 	
		= 	
				\begin{cases}
					\displaystyle
					\binom	{ | \sigma | }
							{
							 	\sigma_1, 
							 	..., 
							 	\sigma_{ \ell( \sigma ) } 
							}
						,
						& \sigma \neq \zeroComp,
						\\
					1
						,
						& \sigma = \zeroComp
						.
			\end{cases}
$$

\section{The Leftmost Column Process}
\label{section leftmost column}

Our study of the leftmost column process will be mainly focused on the $ \theta = 0 $ case.
However, it will be useful to study the distribution of the $ ( \alpha, \alpha ) $ leftmost column process when the up-down chain is in stationarity.
As we will see, this distribution has a role in the evolution of the $ ( \alpha, 0 ) $ process.

\begin{prop}

	The stationary distribution of $ \upDownChainSym_n $ is given by
	$$
		\upDownDistSym_n( \sigma ) 	
			=
				\binom{ n }{ \sigma } 
				\frac{ 1 }{ ( \alpha )_n } 
				\prod_{ j = 1 }^{ \ell( \sigma ) } 
					\alpha
					\, 
					( 1 - \alpha )_{ \sigma_j - 1 }
			,
				\qquad 	
				\sigma \in \compSet_n
			,	\,
				n \ge 0
			.
	$$

	\noindent
	Moreover, the following consistency conditions hold:
	\begin{equation}
		\label{eqnConsistency}
		\upDownDistSym_n 	
			= 	
					\upDownDistSym_{ n - 1 }
					\upKernelSym
			= 	
					\upDownDistSym_{ n + 1 }
					\downKernel
			,
					\qquad
					n \ge 1
			.
	\end{equation}

\end{prop}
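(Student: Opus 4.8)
The plan is to derive stationarity as a formal consequence of the two consistency relations in \eqref{eqnConsistency}, and to obtain those relations by a direct computation. First I would observe that the two relations together immediately give stationarity: writing $\upDownKernelSym_n = \upKernelSym\,\downKernel$ as in \eqref{eq tupdown}, one has $\upDownDistSym_n \upDownKernelSym_n = (\upDownDistSym_n \upKernelSym)\downKernel = \upDownDistSym_{n+1}\downKernel = \upDownDistSym_n$, where the middle equality is the up-consistency relation at level $n$ and the last is the down-consistency relation. Moreover, the explicit formula need not be checked separately to be a probability measure: the base case $\upDownDistSym_0(\zeroComp)=1$ is immediate, and since $\upKernelSym$ is a Markov kernel, the up-consistency $\upDownDistSym_n = \upDownDistSym_{n-1}\upKernelSym$ propagates total mass $1$ from level $n-1$ to level $n$ by induction. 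Thus everything reduces to verifying the two equalities in \eqref{eqnConsistency} for the stated formula.

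For each relation I would enumerate the preimages of a fixed $\sigma\in\compSet_n$ under the relevant kernel and compute the contribution of each. Both kernels produce $\sigma$ in exactly two ways: by modifying a single existing block, or by creating/deleting a block of size $1$. For down-consistency, the compositions $\tau\in\compSet_{n+1}$ with $\downKernel(\tau,\sigma)>0$ are either $\sigma$ with some coordinate $\sigma_i$ raised to $\sigma_i+1$ (then $\downKernel(\tau,\sigma)=(\sigma_i+1)/(n+1)$), or $\sigma$ with a new size-$1$ block inserted into one of its $\ell(\sigma)+1$ gaps (then $\downKernel(\tau,\sigma)=1/(n+1)$). For up-consistency, the compositions $\rho\in\compSet_{n-1}$ with $\upKernelSym(\rho,\sigma)>0$ are either $\sigma$ with some $\sigma_i\ge 2$ lowered to $\sigma_i-1$, or, when $\sigma_i=1$, the composition $\sigma$ with that singleton deleted. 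The heart of the argument is a ratio computation: in each case I would evaluate $\upDownDistSym_{n\pm1}(\cdot)/\upDownDistSym_n(\sigma)$ using the two elementary identities $(1-\alpha)_{m}=(1-\alpha)_{m-1}(m-\alpha)$ and the multinomial/Pochhammer cancellations, then multiply by the corresponding transition probability. These are arranged so that the transition probability cancels the weight factor exactly, leaving the clean contributions $\sigma_i/n$ (up, block $i$) and $1/n$ (up, singleton), respectively $(\sigma_i-\alpha)/(n+\alpha)$ (down, block $i$) and $\alpha/(n+\alpha)$ (down, gap). Summing the per-block contributions—together with the $\ell(\sigma)+1$ gap contributions in the down case—then gives $\sum_i \sigma_i/n = 1$ in the up case and $\big(\sum_i(\sigma_i-\alpha)+(\ell(\sigma)+1)\alpha\big)/(n+\alpha)=(n+\alpha)/(n+\alpha)=1$ in the down case.

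The step I expect to require the most care is the bookkeeping for the singleton blocks, and in particular the role of the choice $\theta=\alpha$. In the up-step the growth kernel assigns probability $\alpha/(n-1+\alpha)$ to starting a new table after any given table but $\theta/(n-1+\alpha)$ to starting one at the front; only when $\theta=\alpha$ do all $\ell(\rho)+1$ ``new block'' insertions carry the same probability, which is exactly what makes the singleton contributions position-independent and lets the sum telescope to $1$. I would therefore treat the front insertion and the interior insertions uniformly, checking that both reduce to the same transition probability, and handle the edge cases $\sigma_i=1$ (where a block cannot have arisen from a join) and the empty-composition base case explicitly. As a cross-check I would note that $\upDownDistSym_n$ is the left-to-right reversal of the Gnedin--Pitman $(\alpha,\alpha)$-regenerative composition structure, so that \eqref{eqnConsistency} can alternatively be read off from the deletion/insertion consistency of regenerative composition structures; but the direct computation above is self-contained and keeps the role of $\theta=\alpha$ transparent.
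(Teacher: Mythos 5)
Your proposal is correct, but it is a genuinely different argument from the one in the paper: there, the proof is entirely by citation --- the stationary distribution is read off from Theorem 1.1 of the authors' earlier work together with Formula 48 of Gnedin--Pitman for the $\alpha=\theta$ specialization, and the consistency relations \eqref{eqnConsistency} are quoted from Proposition 6 of Pitman--Winkel --- whereas you verify everything from scratch. Your computations check out: the ratio $\upDownDistSym_{n-1}(\rho)/\upDownDistSym_n(\sigma)$ against the up-step probability does give per-block contributions $\sigma_i/n$ (with the singleton case also equal to $\sigma_i/n=1/n$), the down-step contributions are $(\sigma_i-\alpha)/(n+\alpha)$ per block and $\alpha/(n+\alpha)$ per gap, and both sums telescope to $1$; moreover your observation that stationarity is a formal consequence of the two consistency relations, via $\upDownDistSym_n\upKernelSym\downKernel=\upDownDistSym_{n+1}\downKernel=\upDownDistSym_n$, is a nice logical economy that the paper does not exploit (it sources stationarity and consistency independently). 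What your route buys is a self-contained proof that makes transparent exactly where $\theta=\alpha$ is used (all $\ell(\rho)+1$ insertion positions carry equal weight); what the paper's route buys is brevity and a connection to the regenerative-composition-structure literature, which is also where the harder general-$(\alpha,\theta)$ statement lives. One point to tighten in a write-up: when $\sigma$ has singleton parts, distinct insertion positions can produce the \emph{same} composition (e.g.\ both insertions into $(1)$ yield $(1,1)$, so $\upKernelSym((1),(1,1))=2\alpha/(1+\alpha)$), so the kernel values are not literally $\alpha/(n-1+\alpha)$ and $1/(n+1)$ per preimage; your per-position (rather than per-preimage) bookkeeping is the correct way to sum, and you should state explicitly that summing over positions/ways is equivalent to summing the total kernel mass over distinct preimages. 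Finally, since the proposition asserts \emph{the} stationary distribution, you should append the paper's observation that $\upDownChainSym_n$ is irreducible and aperiodic, so the stationary distribution you have exhibited is unique.
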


\begin{proof}
The stationary distribution of $\upDownChain_n$ is identified in \cite[Theorem 1.1]{krdr2020} and the formula in the special case $\alpha=\theta$ follows from \cite[Formula 48]{GnedPitm05}.
The consistency conditions follows from \cite[Proposition 6]{PitmWink09}. 
\end{proof}

\begin{prop}
	\label{stationaryDistLeftMostSym}

	If $ \upDownChainSym_n$ has distribution $ \upDownDistSym_n $, then $ \leftMostColChainSym_n $ has distribution
	$$
		\leftMostColDistSym_n( i ) 	
			=
				\binom{ n }{ i } 
				\frac	{ 
							\alpha 
							\, ( 1 - \alpha )_{ i - 1 } 
						}{ 
							( n - i + \alpha )_i 
						}  
				\,  
				\indicator ( 1 \le i \le n )
			,
				\qquad 	
				i \ge 0
			,	\,
				n \ge 1
			.
	$$

\end{prop}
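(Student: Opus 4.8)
The plan is to compute the law of $\sigma_1$ directly from the explicit formula for $\upDownDistSym_n$ in the previous proposition, by summing it over all compositions whose first part equals a fixed value $i$. Since $\leftMostColChainSym_n = \phi( \upDownChainSym_n )$ with $\phi(\sigma) = \sigma_1$, the pushforward gives
$$
	\leftMostColDistSym_n( i )
		=
			\sum_{ \substack{ \sigma \in \compSet_n \\ \sigma_1 = i } }
				\upDownDistSym_n( \sigma )
		=
			\sum_{ \tau \in \compSet_{ n - i } }
				\upDownDistSym_n( ( i, \tau ) )
			,
$$
where I parametrize each such composition as $\sigma = ( i, \tau )$ with $\tau$ a composition of $n - i$ (allowing $\tau = \zeroComp$ when $i = n$). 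This is meaningful precisely for $1 \le i \le n$ and assigns mass $0$ otherwise, matching the indicator in the statement.

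First I would factor the summand. The multinomial coefficient splits as $\binom{ n }{ ( i, \tau ) } = \binom{ n }{ i } \binom{ n - i }{ \tau }$, and the product over components separates its $j = 1$ term, namely
$$
	\prod_{ j = 1 }^{ \ell( ( i, \tau ) ) }
		\alpha \, ( 1 - \alpha )_{ ( i, \tau )_j - 1 }
		=
			\alpha \, ( 1 - \alpha )_{ i - 1 }
			\prod_{ j = 1 }^{ \ell( \tau ) }
				\alpha \, ( 1 - \alpha )_{ \tau_j - 1 }
			.
$$
Substituting into the formula for $\upDownDistSym_n$ and pulling the $i$-dependent factors out of the sum yields
$$
	\leftMostColDistSym_n( i )
		=
			\binom{ n }{ i }
			\frac{ \alpha \, ( 1 - \alpha )_{ i - 1 } }{ ( \alpha )_n }
			\sum_{ \tau \in \compSet_{ n - i } }
				\binom{ n - i }{ \tau }
				\prod_{ j = 1 }^{ \ell( \tau ) }
					\alpha \, ( 1 - \alpha )_{ \tau_j - 1 }
			.
$$

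The key step is to recognize the remaining sum. By the formula for $\upDownDistSym_{ n - i }$, each summand equals $( \alpha )_{ n - i } \, \upDownDistSym_{ n - i }( \tau )$, so the sum equals $( \alpha )_{ n - i }$ times the total mass of the probability distribution $\upDownDistSym_{ n - i }$ on $\compSet_{ n - i }$, which is simply $( \alpha )_{ n - i }$. The boundary case $i = n$ is covered automatically since $\upDownDistSym_0( \zeroComp ) = 1$ and $( \alpha )_0 = 1$. This reduces everything to the elementary identity
$$
	\frac{ ( \alpha )_{ n - i } }{ ( \alpha )_n }
		=
			\frac{ 1 }{ ( n - i + \alpha )_i }
			,
$$
obtained by cancelling the common initial factors of the two rising factorials, which leaves exactly the $i$ consecutive factors $( n - i + \alpha )( n - i + \alpha + 1 ) \cdots ( n + \alpha - 1 ) = ( n - i + \alpha )_i$ in the denominator. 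Assembling these pieces gives the claimed formula.

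I do not expect a serious obstacle here: the argument is a direct computation. The only points that require care are the bookkeeping of the empty composition in the boundary case $i = n$, and the observation that the inner sum is precisely the normalization of the lower-level stationary distribution $\upDownDistSym_{ n - i }$ — this is what lets me evaluate the composition sum by appealing to the previous proposition rather than summing over compositions by hand.
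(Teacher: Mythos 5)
Your proposal is correct and is essentially the paper's own argument: the paper simply asserts the pointwise factorization $\upDownDistSym_n( i, \sigma ) = \leftMostColDistSym_n( i ) \, \upDownDistSym_{ n - i }( \sigma )$ (whose verification is exactly your multinomial-coefficient splitting and rising-factorial cancellation $( \alpha )_{ n - i } / ( \alpha )_n = 1 / ( n - i + \alpha )_i$) and then sums over $\sigma \in \compSet_{ n - i }$, using, as you do, that $\upDownDistSym_{ n - i }$ has total mass one.
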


\begin{proof}

	Let $ 1 \le i \le n $ and $ \sigma \in \compSet_{ n - i } $. 
	It can be verified that
	\begin{equation}
    	\label{eqnCondDistOfRightStructure}
    		\upDownDistSym_n( i, \sigma ) 	
    			= 	
    				\leftMostColDistSym_n( i ) 
    				\upDownDistSym_{ n - i }( \sigma ).
	\end{equation}

	\noindent 
	Summing over $ \sigma $ concludes the proof.
\end{proof}

Let $ n \ge i \ge 1 $ and $ \sigma \in \compSet_{ n - i } $. 
Consider taking an $ ( \alpha, 0) $ up-step from $ ( i, \sigma ) $ followed by a down-step. 
Let $ U $ be the event in which this up-step stacks a box on the first column of $ ( i, \sigma ) $,
 and let $ D $ be the event in which the down-step removes a box from the first column of a composition.
Then, 
	$ \leftMostColLocalProb_{ i, i + 1 } 		= 	\prob ( U \cap D^c ) $, 
	$ \leftMostColLocalProb_{ i, i - 1 } 		= 	\prob ( U^c \cap D ) $, 
	$ \leftMostColLocalProb^{ ( 1 ) }_{ i, i } 	= 	\prob ( U^c \cap D^c ) $, 
	$ \leftMostColLocalProb^{ ( 2 ) }_{ i, i } 	= 	\prob ( U \cap D ) $, and 
	$ \leftMostColLocalProb_{ i, i } 			= 	
													\leftMostColLocalProb^{ ( 1 ) }_{ i, i } 
												+ 	\leftMostColLocalProb^{ ( 2 ) }_{ i, i } $
do not depend on $ \sigma $. Indeed, we have the formulas
\begin{equation}
	\label{yLocalProbDef}
	\begin{matrix}
		\leftMostColLocalProb_{ i, i - 1 } 			
			& 
				\matrixEqualsCol 
			& 
				\frac{ i ( n - i + \alpha ) }{ n ( n + 1 ) }, 					
				\quad
		&
		\leftMostColLocalProb^{ ( 1 ) }_{ i, i } 		
			& 
				\matrixEqualsCol 
			& 	
				\frac{ ( n - i + 1 ) ( n - i + \alpha ) }{ n ( n + 1 ) }, 	
		\\
		\vspace{-2mm}
		\\
		\leftMostColLocalProb_{ i, i + 1 } 			
			& 
				\matrixEqualsCol 
			& 
				\frac{ ( i - \alpha ) ( n - i ) }{ n ( n + 1 ) }, 				
				\quad		
		&
		\leftMostColLocalProb^{ ( 2 ) }_{ i, i } 
			& 
				\matrixEqualsCol 
			& 	
				\frac{ ( i - \alpha ) ( i + 1 ) }{ n ( n + 1 ) }
			.
	\end{matrix} 
\end{equation}

\noindent
We use these formulas to define
$ 
	\leftMostColLocalProb_{ 0, -1 },
$ 
$ 
	\leftMostColLocalProb_{ 0, 1 },
$ 
$ 
	\leftMostColLocalProb^{ ( 1 ) }_{ 0, 0 },
$ 
$ 
	\leftMostColLocalProb^{ ( 2 ) }_{ 0, 0 },
$ 
and
$ 
	\leftMostColLocalProb_{ 0, 0 }
		=
			\leftMostColLocalProb^{ ( 1 ) }_{ 0, 0 }
		+
    	\leftMostColLocalProb^{ ( 2 ) }_{ 0, 0 }.
$
Moreover, we extend
$ 
	\leftMostColLocalProb_{ i, j }
$ 
to be zero for all other integer arguments $ i $ and $ j $.

The following is a useful identity relating the transition kernels of the $ ( \alpha, 0 ) $ and $ ( \alpha, \alpha ) $ chains.

\begin{prop}
	\label{propTransitionRecurrence}

	For $ n \ge 1 $ and $ ( i, \sigma ), ( j, \sigma' ) \in \compSet_n $, we have the identity
	\begin{align*}
		\upDownKernelBiased_n 
			\Big( 
					( i, \sigma ), ( j, \sigma' ) 
			\Big) 	
			& = 	
					\leftMostColLocalProb_{ i, j } 
					\upKernelSym( \sigma, \sigma' ) 
					\indicator( j = i - 1 ) 
				+ 	\leftMostColLocalProb_{ i, j } 
					\downKernel( \sigma, \sigma' ) 
					\indicator( j = i + 1 ) 	
			\\
			& \quad 	
				+	( 	\leftMostColLocalProb^{ ( 1 ) }_{ i, i } 
						\upDownKernelSym_{ n - i }( \sigma, \sigma' ) 
					+ 	\leftMostColLocalProb^{ ( 2 ) }_{ i, i } 
						\indicator( \sigma = \sigma' ) 
					) 
					\indicator( j = i ) 	
			\\
			& \quad 	
				+ 	\leftMostColLocalProb_{ 1, 0 } 
					\upKernelSym( \sigma, ( j, \sigma' ) ) 
					\indicator( i = 1 )
					.
	\end{align*}

\end{prop}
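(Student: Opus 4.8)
The plan is to compute $\upDownKernelBiased_n\big((i,\sigma),(j,\sigma')\big)$ directly from its factorization \eqref{eq tupdown} into an $(\alpha,0)$ up-step followed by a down-step, and to organize the resulting sum according to whether or not the new box is stacked on the first column (event $U$) and whether or not the removed box comes from the first column (event $D$). Recall that starting from $(i,\sigma)$ the leftmost column has height $i$; since $\theta=0$, the $(\alpha,0)$ up-step can place a new box on the first column, or start a new table after the first column, or act entirely within $\sigma$ — but it cannot prepend a new table at the very start of the list, which is exactly the feature that keeps the leftmost coordinate tractable. The key structural observation I would exploit is that the $(\alpha,0)$ growth step, restricted to moves that do \emph{not} prepend a new leftmost table, agrees with the $(\alpha,\alpha)$ growth step on the ``tail'' $\sigma$. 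This is what lets the $\upKernelSym$ and $\downKernel$ factors on $\sigma$ appear in the final formula.

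First I would enumerate the four events $U\cap D$, $U\cap D^c$, $U^c\cap D$, $U^c\cap D^c$ and match them to the four probabilities $\leftMostColLocalProb_{i,i+1}$, $\leftMostColLocalProb_{i,i-1}$, $\leftMostColLocalProb^{(1)}_{i,i}$, $\leftMostColLocalProb^{(2)}_{i,i}$, using the fact — asserted in the text — that these do not depend on $\sigma$. Concretely:
\begin{itemize}
\item On $U^c\cap D$ (stack on $\sigma$, then remove from column one) the leftmost column drops to $i-1$; the net effect on $\sigma$ is a pure up-step, so this contributes $\leftMostColLocalProb_{i,i-1}\,\upKernelSym(\sigma,\sigma')\,\indicator(j=i-1)$, provided the $(\alpha,0)$ up-move on $\sigma$ coincides with the $(\alpha,\alpha)$ growth step there.
\item On $U\cap D^c$ (stack on column one, then remove from $\sigma$) the leftmost column rises to $i+1$ and the net effect on $\sigma$ is a down-step, giving $\leftMostColLocalProb_{i,i+1}\,\downKernel(\sigma,\sigma')\,\indicator(j=i+1)$.
\item On $U^c\cap D^c$ (stack on $\sigma$, remove from $\sigma$) the column stays at $i$ and $\sigma$ undergoes a full $(\alpha,\alpha)$ up-down step, contributing $\leftMostColLocalProb^{(1)}_{i,i}\,\upDownKernelSym_{n-i}(\sigma,\sigma')\,\indicator(j=i)$.
\item On $U\cap D$ (stack on column one, remove from column one) the composition returns to its starting value, contributing $\leftMostColLocalProb^{(2)}_{i,i}\,\indicator(\sigma=\sigma')\,\indicator(j=i)$.
\end{itemize}
I would verify the two $\upKernelSym/\downKernel$ identifications by checking that, conditioned on $U^c$, the placement of the box within $\sigma$ has exactly the $(\alpha,\alpha)$ growth probabilities (the $(n+\theta)$ denominator with $\theta=0$ versus the shifted count for the $\sigma$-subconfiguration must be reconciled), and symmetrically for the down-step, which does not depend on $(\alpha,\theta)$ at all.

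The remaining term is the genuinely new-table contribution. When the $(\alpha,0)$ up-step starts a new table immediately after the first column — turning $(i,\sigma)$ into $(i,1,\sigma)$ — a subsequent down-step can remove the unique box of that brand-new singleton table and return us to $(i,\sigma)$, or it can act elsewhere; but the case that produces a genuinely different leftmost structure is when the first column itself is a singleton, $i=1$, so that removing from it leaves a composition whose new leftmost coordinate is $j$. This is the source of the last line $\leftMostColLocalProb_{1,0}\,\upKernelSym(\sigma,(j,\sigma'))\,\indicator(i=1)$, and I expect \textbf{this boundary term to be the main obstacle}: one must carefully track how a new table inserted just after a singleton first column, followed by deletion of that singleton, reindexes the composition and contributes to transitions $1\mapsto j$. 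I would handle it by writing out the $i=1$ case separately, noting that after the prepended-singleton is deleted the leftmost column of the result is determined by the inserted table, and identifying the combined insert-then-delete weight with the $(\alpha,\alpha)$ growth probability $\upKernelSym(\sigma,(j,\sigma'))$ scaled by $\leftMostColLocalProb_{1,0}$. Summing the five contributions and collecting the indicator functions yields the stated identity; throughout, the only real work is the bookkeeping of up-step probabilities and the verification that each conditional move on the tail matches the $(\alpha,\alpha)$ kernel.
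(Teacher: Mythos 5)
Your proposal takes essentially the same route as the paper's proof: condition on the four events $U\cap D$, $U\cap D^c$, $U^c\cap D$, $U^c\cap D^c$, identify the conditional law of the tail given $U^c$ with the $(\alpha,\alpha)$ growth step $\upKernelSym(\sigma,\cdot\,)$ (and the conditional down-step on the tail with $\downKernel$), and split the $U^c\cap D$ case into $i>1$ versus $i=1$ to produce the boundary term $\leftMostColLocalProb_{1,0}\,\upKernelSym(\sigma,(j,\sigma'))\,\indicator(i=1)$. The verifications you defer — reconciling the $(\alpha,0)$ denominators with those of the $(\alpha,\alpha)$ step on $\sigma$, and the fact that, given $U^c$, the tail placement is independent of the down-step event $D$ — are exactly the observations the paper's proof records before computing the four conditional probabilities.
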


\begin{proof}
	
	Fix $ ( i, \sigma ) $ and $ ( j, \sigma' ) $ in $ \compSet_n $. Let $ \upComp $ be the composition obtained by performing an $ ( \alpha, 0 ) $ up-step from $ ( i, \sigma ) $ and $ \downComp $ be the composition obtained by performing a down-step from $ \upComp $. 
	As before, let $ U $ be the event in which the up-step adds to the first column of a composition 
	and 
	$ D $ be the event in which the down-step removes from the first column of a composition. 
Then, we have that
	$$
		U 
			= 
				\big
				\{ 
					\upComp = ( i + 1, \sigma ) 
				\big
				\}
			,
			\quad
		U^c
			= 
				\{ 
					\upComp_1 = i 
				\}
			,
			\quad
		D^c
			\subset 
				\{ 
					\downComp_1 = \upComp_1 
				\}
			,
	$$
	and
	\begin{align*}
		D
			& \subset 
				\Big
				\{ 
					\upComp_1 > 1
				,
					\,
					\downComp = ( \upComp_1 - 1, ( \upComp )_2^{ \ell( \upComp ) } ) 
				\Big
				\}
			\cup
				\Big
				\{ 
					\upComp_1 = 1
				,
					\,
					\downComp = ( \upComp )_2^{ \ell( \upComp ) }
				\Big
				\}
			.
	\end{align*}

	To obtain the identity, we note that
	$$
		\upDownKernelBiased_n( ( i, \sigma ), ( j, \sigma' ) )
			= 	\prob \{  \downComp = ( j, \sigma' )  \},
	$$
	
	\noindent 
	and rewrite this probability by conditioning on the above sets.
	Of particular importance will be the following observations:
	the conditional distribution of 
	$ 
		( \downComp )_2^{ \ell( \downComp ) } 
	$ 
	given 
	$ 
		( \upComp )_2^{ \ell( \upComp ) } 
	$ 
	and $ D^c $ is 
	$ 
		\downKernel( ( \upComp )_2^{ \ell( \upComp ) }, \cdot \, )
		,
	$ 
	and conditionally given $ U^c $, 
	$ 
		( \upComp )_2^{ \ell( \upComp ) } 
	$ 
	is independent of $ D $ and has distribution 
	$ 
		\upKernelSym( \sigma, \cdot \, ) 
		.
	$
	We also make use of the fact that the events
	$
		\{ 
			\upComp = ( n + 1 - | \rho |, \rho )
		\}
	$
	and
	$
		\{ 
			( \upComp )_2^{ \ell( \upComp ) }
				= 
					\rho
		\}
	$
	are identical, since the size of $ \upComp $ is known to be $ n + 1 $.
		
	Our first conditional probability is given by
	\begin{align*}
		\prob	( 
					\downComp = ( j, \sigma' ) 
				| 	U, 
					D 
				)
			& = 	\prob	( 
								\downComp = ( j, \sigma' ) 
							| 	\upComp = ( i + 1, \sigma ), 
								D 
							) 	
			\\
			& = 	\prob	( 
								( i, \sigma ) = ( j, \sigma' ) 
							| 	\upComp = ( i + 1, \sigma ), 
								D 
							) 	
			\\
			& = 	\indicator ( ( j, \sigma' ) = ( i, \sigma ) ).
	\end{align*}

	\noindent 
	Next, we will condition on $ U \cap D^c $.
	Notice that this is a null set when $ i = n $.
	When $ i < n $, we have
	\begin{align*}
		\prob	( 
					\downComp = ( j, \sigma' ) 
				| 	U, 
					D^c 
				) 		
			& = 	
					\prob	( 
								\upComp_1 = j, 
								( \downComp )_2^{ \ell( \downComp ) } = \sigma' 
							| 	\upComp = ( i + 1, \sigma ), 
								D^c 
							) 		
			\\
			& = 	
					\indicator ( j = i + 1 ) 
					\prob	( 
								( \downComp )_2^{ \ell( \downComp ) } = \sigma' 
							| 	( \upComp )_2^{ \ell( \upComp ) } = \sigma, 
								D^c 
							) 			
			\\
			& = 	\indicator ( j = i + 1 ) 
					\downKernel( \sigma, \sigma' ).
	\end{align*}

	\noindent 
	Conditioning on $ U^c \cap D $ will require two cases.
	For $ i > 1 $, we have
	\begin{align*}
		\prob	( 
					\downComp = ( j, \sigma' ) 
				| 	U^c, 
					D 
				) 		
			& = 	
					\prob	( 
								\downComp = ( j, \sigma' ) 
							| 	\upComp_1 = i, 
								D 
							) 		
			\\
			& = 	
					\prob	( 
								( 
									i - 1, 
									( \upComp )_2^{ \ell( \upComp ) }  
								) 
							= 	( j, \sigma' ) 
							| 	\upComp_1 = i, 
								D 
							)
			\\
			& = 	
					\indicator ( j = i - 1 ) 
					\prob	( 
								( \upComp )_2^{ \ell( \upComp ) } 
							= 	\sigma' 
							| 	U^c, 
								D 
							) 	
			\\
			& = 	
					\indicator ( j = i - 1 ) 
					\prob	( 
								( \upComp )_2^{ \ell( \upComp ) } 
							= 	\sigma' 
							| 	U^c 
							)
			\\
			& = 	
					\indicator ( j = i - 1 ) 
					\upKernelSym( \sigma, \sigma' ),
	\end{align*}

	\noindent 
	and for $ i = 1 $, we have
	\begin{align*}
		\prob	( 
					\downComp = ( j, \sigma' ) 
				| 	U^c, 
					D 
				) 		
			& = 	
					\prob	( 
								\downComp = ( j, \sigma' ) 
							| 	\upComp_1 = 1, 
								D  
							) 	
			\\
			& = 	
					\prob	( 
								( \upComp )_2^{ \ell( \upComp ) } 
							= 	( j, \sigma' )
							| 	U^c, 
								D 
							) 	
			\\
			& = 	
					\prob	( 
								( \upComp )_2^{ \ell( \upComp ) } 
							= 	( j, \sigma' ) 
							| 	U^c 
							) 		
			\\
			& = 	
					\upKernelSym( \sigma, ( j, \sigma' ) ).
	\end{align*}

	\noindent 
	Finally, we condition on $ U^c \cap D^c $.
	We have that
	\begin{align*}
		& \prob	\big( 
					\downComp = ( j, \sigma' ) 
				| 	U^c, 
					D^c 
				\big) 	
			\\
			& = 	
					\prob	\Big( 
								\upComp_1 = j, 
								( \downComp )_2^{ \ell( \downComp ) } = \sigma' 
							| 	\upComp_1 = i, 
								D^c 
							\Big) 		
			\\
			& = 	
					\indicator ( j = i ) 
					\,
					\prob	\Big( 
								( \downComp )_2^{ \ell( \downComp ) } = \sigma' 
							| 	U^c, 
								D^c 
							\Big) 	
			\\
			& = 	
					\indicator ( j = i ) 
					\!\!\!\!
					\sum_{ \tau \in \compSet_{ n + 1 - i }  }  
						\!\!\!\!\!
						\prob	\Big( 
									( \upComp )_2^{ \ell( \upComp ) } = \tau 
								| 	U^c, 
									D^c 
								\Big)  
					\, \prob 	\Big( 
									( \downComp )_2^{ \ell( \downComp ) } = \sigma' 
								| 	\upComp = ( i, \tau ), 
									D^c
								\Big) 		
			\\
			& = 	
					\indicator ( j = i ) 
					\!\!\!\!
					\sum_{ \tau \in \compSet_{ n + 1 - i }  }  
						\!\!\!
						\!\!
						\prob	\Big( 
									( \upComp )_2^{ \ell( \upComp ) } = \tau 
								| 	U^c
								\Big)  
					\,	\prob 	\Big( 
									( \downComp )_2^{ \ell( \downComp ) } = \sigma' 
								| 	( \upComp )_2^{ \ell( \upComp ) } = \tau, 
									D^c 
								\Big)
			 \\
			& = 	
					\indicator ( j = i ) 
					\!\!\!
					\sum_{ \tau \in \compSet_{ n + 1 - i }  }  
						\!\!\!
						\upKernelSym( \sigma, \tau )  
						\downKernel( \tau, \sigma' ) 		
			\\
			& = 	
					\indicator ( j = i ) 
					\,
					\upDownKernelSym_{ n - i }( \sigma, \sigma' ).
	\end{align*}

	\noindent Collecting the terms above with the appropriate terms in (\ref{yLocalProbDef}) establishes the result. 
\end{proof}

%
%
%
%

Let $ n \ge 1 $. 
We define a transition kernel $ \Lambda_n $ from $ [ n ] $ to $ \compSet_n $ by 
$$
	\Lambda_n( i, ( i, \sigma ) ) 	
			= 
					\upDownDistSym_{ n - i }( \sigma )
					, 
$$

\noindent
and a transition kernel $ \Phi_n $ from $ \compSet_n $ to $ [ n ] $ by 
$$
	\Phi( \sigma, i )
			= 
					\indicator( \sigma_1 = i )
					.
$$

\begin{prop}
	\label{propIntertwining}

	For $ n \ge 1 $, the transition kernel 
    $ 
    	\leftMostColKernelBiased_n 
    		= 
    			\Lambda_n 
    			\upDownKernelBiased_n 
    			\Phi_n 
    $
	satisfies
    \begin{equation}
    	\label{eqnIntertwining}
        	\Lambda_n \upDownKernelBiased_n 	
        		= 	
        				\leftMostColKernelBiased_n 
						\Lambda_n
				.
    \end{equation}

	\noindent
    Consequently, if the initial distribution of $ \upDownComp_n^{ ( \alpha, 0 ) } $ is of the form $ \mu \Lambda_n $,
    then $ \leftMostColChainBiased_n $ is a time-homogeneous Markov chain with transition kernel $ \leftMostColKernelBiased_n $.
	Moreover, the transition kernel $ \leftMostColKernelBiased_n $ is given explicitly by
	$$
		\leftMostColKernelBiased_n( i, j ) 	
			= 	
				\leftMostColLocalProb_{ i, j } 
			+ 	\leftMostColLocalProb_{ 1, 0 } 
				\leftMostColDistSym_n( j ) 
				\indicator( i = 1 )
			.
	$$

\end{prop}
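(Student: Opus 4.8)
The plan is to reduce everything to a single kernel computation and then invoke the standard theory of Markov functions. The structural fact that puts us in that setting is that $\Lambda_n$ is a right inverse of $\Phi_n$: since $\Lambda_n(i,\cdot)$ is supported on compositions whose first coordinate is $i$ and $\upDownDistSym_{n-i}$ is a probability measure, one reads off $\Lambda_n \Phi_n = \mb 1$ on $[n]$. Because $\phi(\sigma)=\sigma_1$ is a genuine function and $\Lambda_n$ is concentrated on its fibers, once the intertwining \eqref{eqnIntertwining} is in hand the Markov property of $\leftMostColChainBiased_n$ (and the identification of its kernel) follows from the Rogers--Pitman criterion for a function of a Markov chain to be Markov; equivalently, by a one-step induction showing that the conditional law of $\upDownChainBiased_n(k)$ given $\leftMostColChainBiased_n(0),\dots,\leftMostColChainBiased_n(k)$ is $\Lambda_n(\leftMostColChainBiased_n(k),\cdot)$, with the base case supplied by the hypothesis that the initial law is $\mu\Lambda_n$.

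The heart of the argument is therefore to compute $\Lambda_n \upDownKernelBiased_n$ and show it factors through $\Lambda_n$. I would fix $i\in[n]$, write an arbitrary target composition of $n$ as $(j,\sigma')$ with $j$ its first coordinate and $\sigma'\in\compSet_{n-j}$, and expand
\[
(\Lambda_n \upDownKernelBiased_n)\big(i,(j,\sigma')\big)
= \sum_{\sigma\in\compSet_{n-i}} \upDownDistSym_{n-i}(\sigma)\,\upDownKernelBiased_n\big((i,\sigma),(j,\sigma')\big).
\]
Substituting the four-term expression from Proposition \ref{propTransitionRecurrence} for the summand, each piece collapses under a summation identity already available: the $j=i-1$ and $j=i+1$ terms use the consistency relations \eqref{eqnConsistency} (namely $\upDownDistSym_{n-i}\upKernelSym=\upDownDistSym_{n-i+1}$ and $\upDownDistSym_{n-i}\downKernel=\upDownDistSym_{n-i-1}$); the $\leftMostColLocalProb^{(1)}_{i,i}$ part of the $j=i$ term uses stationarity of $\upDownDistSym_{n-i}$ under $\upDownKernelSym_{n-i}$; the $\leftMostColLocalProb^{(2)}_{i,i}$ part is a trivial diagonal sum; and the $i=1$ term uses $\upDownDistSym_{n-1}\upKernelSym=\upDownDistSym_n$ followed by the factorization \eqref{eqnCondDistOfRightStructure}, $\upDownDistSym_n((j,\sigma'))=\leftMostColDistSym_n(j)\,\upDownDistSym_{n-j}(\sigma')$. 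The point is that in every case the residual dependence on $\sigma'$ reduces to the single common factor $\upDownDistSym_{n-j}(\sigma')$, so that
\[
(\Lambda_n \upDownKernelBiased_n)\big(i,(j,\sigma')\big)
= \Big(\leftMostColLocalProb_{i,j} + \leftMostColLocalProb_{1,0}\,\leftMostColDistSym_n(j)\,\indicator(i=1)\Big)\,\upDownDistSym_{n-j}(\sigma').
\]

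Recognizing $\upDownDistSym_{n-j}(\sigma')=\Lambda_n(j,(j,\sigma'))$, the displayed identity is exactly $\Lambda_n \upDownKernelBiased_n = \widehat Q\,\Lambda_n$, where $\widehat Q(i,j)=\leftMostColLocalProb_{i,j}+\leftMostColLocalProb_{1,0}\,\leftMostColDistSym_n(j)\,\indicator(i=1)$ is the claimed explicit kernel. Multiplying this relation on the right by $\Phi_n$ and using $\Lambda_n\Phi_n=\mb 1$ gives $\widehat Q = \Lambda_n \upDownKernelBiased_n \Phi_n = \leftMostColKernelBiased_n$, which simultaneously identifies the intertwining kernel with $\leftMostColKernelBiased_n$, verifies \eqref{eqnIntertwining}, and delivers the explicit formula. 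The main obstacle is purely the bookkeeping in this collapse: one must pair each of the four terms of Proposition \ref{propTransitionRecurrence} with the correct summation identity --- in particular distinguishing the consistency relations (which shift the level $n-i$ by one, matching $n-j$ when $j=i\pm1$) from the stationarity identity needed on the diagonal --- and verify that the exponents of the surviving $\upDownDistSym$ factor line up to $n-j$ uniformly across all cases. Once that alignment is confirmed the intertwining closes automatically.
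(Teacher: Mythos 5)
Your proposal is correct and follows essentially the same route as the paper: expand $(\Lambda_n\upDownKernelBiased_n)(i,(j,\sigma'))$ via Proposition \ref{propTransitionRecurrence}, collapse each of the four terms using the consistency relations \eqref{eqnConsistency}, stationarity, and the factorization \eqref{eqnCondDistOfRightStructure} so that the common factor $\upDownDistSym_{n-j}(\sigma')=\Lambda_n(j,(j,\sigma'))$ emerges, then use $\Lambda_n\Phi_n=\mb 1$ to identify the explicit kernel with $\leftMostColKernelBiased_n$, and invoke Rogers--Pitman for the Markov property. This is exactly the paper's argument, down to the pairing of each term with its summation identity.
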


\begin{proof}

	Let $ C_n $ be the kernel on $ [ n ] $ defined by the right side of the above equation.	
	Fix $ i, j \in [ n ] $ and $ \sigma' \in \compSet_{n-j} $. 
	Using Proposition \ref{propTransitionRecurrence} and the identities (\ref{eqnConsistency}) and (\ref{eqnCondDistOfRightStructure}), we compute
	\begin{align*}
		&
		( \Lambda_n \upDownKernelBiased_n ) 
		( i, ( j, \sigma' ) ) 		
		\\
			& \hspace{ 10 mm } = 	
					\sum_{ \sigma \in \compSet_{ n - i } } 
						\Lambda_n( i, ( i, \sigma ) ) 
						\upDownKernelBiased_n( ( i, \sigma ), ( j, \sigma' ) ) 	
			\\ 	
			& \hspace{ 10 mm } = 	
					\leftMostColLocalProb_{ i, j } 
					\!\!
					\sum_{ \sigma \in \compSet_{ n - i } } 
						\!\!
						\upDownDistSym_{ n - i }( \sigma ) 
						\left(
        						\upKernelSym( \sigma, \sigma' )
								\indicator( j = i - 1 )
							+ 	\downKernel( \sigma, \sigma' )
								\indicator( j = i + 1 )
						\right)
			\\
			& \hspace{ 10 mm } \quad 	
				+ 	\indicator( j = i )
					\!\!
					\sum_{ \sigma \in \compSet_{ n - i } } 
						\!\!
						\upDownDistSym_{ n - i }( \sigma ) 
    					\left(
        						\upDownKernelSym_{ n - i }( \sigma, \sigma' ) 	
        						\leftMostColLocalProb^{ ( 1 ) }_{ i, i } 
        					+ 	\indicator( \sigma = \sigma' ) 		
        						\leftMostColLocalProb^{ ( 2 ) }_{ i, i } 
    					\right)
			\\
			& \hspace{ 10 mm } \quad 	
				+ 	\leftMostColLocalProb_{ 1, 0 } 
					\indicator( i = 1 )
					\!\!
					\sum_{ \sigma \in \compSet_{ n - i } } 
						\!\!
						\upDownDistSym_{ n - i }( \sigma ) 
						\upKernelSym( \sigma, ( j, \sigma' ) ) 	
			\\
			& \hspace{ 10 mm } = 	
					\leftMostColLocalProb_{ i, j } 
					\left(
       						\upDownDistSym_{ n - j }( \sigma' ) 
							\indicator( j = i - 1 )
						+ 	\upDownDistSym_{ n - j }( \sigma' ) 
							\indicator( j = i + 1 )
					\right)
			\\
			& \hspace{ 10 mm } \quad 	
				+ 	\indicator( j = i )
					\left(
    						\upDownDistSym_{ n - j }( \sigma' ) 
    						\leftMostColLocalProb^{ ( 1 ) }_{ i, i } 
    					+ 	\upDownDistSym_{ n - j }( \sigma' ) 
    						\leftMostColLocalProb^{ ( 2 ) }_{ i, i } 
					\right)
				+ 	\leftMostColLocalProb_{ 1, 0 } 
					\indicator( i = 1 ) 	
					\upDownDistSym_n( j, \sigma' ) 		
			\\
			& \hspace{ 10 mm } = 	
                    \leftMostColLocalProb_{ i, j } 
					\upDownDistSym_{ n - j }( \sigma' ) 
				+ 	\leftMostColLocalProb_{ 1, 0 } 
					\indicator( i = 1 ) 
					\leftMostColDistSym_n( j ) 
					\upDownDistSym_{ n - j }( \sigma') 
			\\
			& \hspace{ 10 mm } = 	
					C_n( i, j ) 
					\Lambda_n( j, ( j, \sigma' ) ) 	
			\\
			& \hspace{ 10 mm } = 	
					( C_n \Lambda_n )( i, ( j, \sigma' ) ).
	\end{align*}

	\noindent 
	The final equality follows from the fact that $ \Lambda_n( j, \, \cdot \, ) $ is supported on 
	$ 
		\{ 
			\sigma \in \compSet_n
		:	\sigma_1 = j
		\}
	$. 
	This establishes the identity 
	$ 
		\Lambda_n 
		\upDownKernelBiased_n 	
			= 	
				C_n 
				\Lambda_n 
			.
	$
	Observing that $ \Lambda_n \Phi_n $ is the identity kernel on $ [ n ] $, we find that
	
%
	\begin{equation*}
	 	\leftMostColKernelBiased_n
	 		= 	
	 			\Lambda_n 
	 			\upDownKernelBiased_n 
	 			\Phi_n 	
	 		= 	
				C_n 
	 			\Lambda_n 
	 			\Phi_n 
	 		= 	
				C_n
			,
	\end{equation*}

	\noindent
	from which we obtain (\ref{eqnIntertwining}) and the explicit description of $ \leftMostColKernelBiased_n $.
	The final claim follows from applying Theorem 2 in \cite{rogersPitman1981}.
\end{proof}

\section{Convergence from Commutation Relations}
\label{section general convergence from commutation}

In this section, we provide a condition under which commutation relations between operators implies the convergence of those operators in an appropriate sense.
In the interest of generality, we first state this condition in the setting of Banach spaces, but we then reformulate it in the context of Markov processes to suit our purposes.
The general setting is as follows.

Let $ V, V_1, V_2, \ldots $ be Banach spaces and $ \pi_1, \pi_2, \ldots $ be uniformly bounded linear operators with $ \pi_n \colon V \to V_n $.
These spaces will be equipped with the following mode of convergence. 
\begin{defn} 
\label{convergenceInDifferentSpaces}
    A sequence $ \{ f_n \}_{ n \ge 1 } $ with $ f_n \in V_n $ converges to an element $ f \in V $ (and we write $ f_n \to f $) if
    $$
    	\norm{
    		f_n
    	-	\pi_n
    		f
    	}
    		\xrightarrow[ n \to \infty ]{}
    			0
    		,
    $$

    \noindent
    where for convenience, we denote every norm by the same symbol $ \norm{ \cdot } $.

\end{defn}

\begin{prop}
	\label{convergenceFromIntertwining}

	For $ n \ge 1 $, let 
	$ L_n \colon D_n \subset V \to V_n $ 
	and $ A_n \colon V_n \to V_n $
	be linear operators 
	in addition to $ A \colon D \subset V \to D $.
	Suppose that for every $ f \in D $, 
	\begin{enumerate}[ label = (\roman*) ]
		\item
		\label{generalIntertwiningCondition}
		$
			A_n
			L_n
			f
				=
					L_n
					A
					f
		$
		for large $ n $, and

		\item
		$
			(
				L_n
			-	\pi_n
			)
				f
			\longrightarrow
				0
		$
		as $ n \to \infty $ (the sequence need only be defined for large $ n $).
		
	\end{enumerate}

	\noindent
	Then for $ f \in D $, the sequence $ f_n = L_n f $ (defined for large $ n $) satisfies
    $$
    	f_n
    		\longrightarrow
    			f
    	\qquad
    	\text{and}
    	\qquad
       	A_n
		f_n
          		\longrightarrow
           			A
        			f
				.
    $$

\end{prop}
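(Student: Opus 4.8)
The plan is to derive both convergences directly from the two hypotheses, the only substantive input being that the range of $A$ lies inside its own domain. First I would unwind Definition \ref{convergenceInDifferentSpaces}: a sequence $g_n \in V_n$ satisfies $g_n \to g$ exactly when $\| g_n - \pi_n g \| \to 0$, and taking $g = 0$ (so that $\pi_n 0 = 0$) shows that $g_n \to 0$ means simply $\| g_n \| \to 0$. In particular, hypothesis (ii) says precisely that $\| ( L_n - \pi_n ) f \| \to 0$ for each $f \in D$.

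The first convergence is then immediate. Since $f_n = L_n f$, we have $\| f_n - \pi_n f \| = \| ( L_n - \pi_n ) f \|$, and the right-hand side tends to $0$ by hypothesis (ii). Hence $f_n \to f$.

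For the second convergence I would use the commutation relation to move $A_n$ past $L_n$: for large $n$, hypothesis (i) gives $A_n f_n = A_n L_n f = L_n ( A f )$, so that $\| A_n f_n - \pi_n ( A f ) \| = \| ( L_n - \pi_n )( A f ) \|$. The one point that must be checked --- and the only place the hypotheses are used in an essential way --- is that $A f$ again belongs to $D$, so that hypothesis (ii) may be reapplied with $A f$ in place of $f$. This is exactly what the assumption $A \colon D \subset V \to D$ guarantees. Granting $A f \in D$, hypothesis (ii) yields $\| ( L_n - \pi_n )( A f ) \| \to 0$, that is, $A_n f_n \to A f$.

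I do not expect any genuine obstacle: the argument is a two-line manipulation once the definition of convergence is unwound. The only bookkeeping to keep in mind is that both the identity in (i) and the sequence $f_n = L_n f$ are only assumed to be defined for large $n$, but since the conclusions concern $n \to \infty$ limits it suffices to argue along the tail, so nothing is lost. The conceptual heart of the statement is simply that the stability condition (ii) is preserved under applying $A$, which is possible precisely because $A$ maps $D$ into itself.
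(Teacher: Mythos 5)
Your proposal is correct and follows essentially the same argument as the paper: the first convergence is hypothesis (ii) read off directly, and the second uses the commutation relation (i) to write $A_n f_n = L_n(Af)$ and then reapplies (ii) to $Af$, which is legitimate precisely because $A$ maps $D$ into $D$. Nothing is missing; your identification of $Af \in D$ as the one essential point matches the paper's own emphasis.
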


\begin{proof}

	Let $ f \in D $ and $ n $ be large enough so that \ref{generalIntertwiningCondition} holds.
	In particular, we can define $ f_n = L_n f $. 
	Writing
	$$
		\norm{ f_n - \pi_n f }
			= 
				\norm{
        				L_n
        				f
					-	\pi_n
						f
					}
			,
	$$
	
	\noindent
	it is clear that $ f_n \to f $. 
	Writing
	\begin{align*}
		\norm{ 
    		A_n
			f_n
		- 	\pi_n 
			A
			f 
		}
			& = 
            		\norm{ 
        				A_n
						L_n
						f
            		- 	\pi_n
						A
            			f 
            		}
			\\
			& = 
            		\norm{ 
        				L_n
						A
						f
            		- 	\pi_n
						A
            			f 
            		}
			\\
			& = 
            		\norm{ 
            			(
    						L_n
                		- 	\pi_n
                		)
						A
            			f 
            		}		
	\end{align*}

	\noindent
	and noting that $ A f \in D $, we obtain the other convergence.
\end{proof}

In the probabilistic context, the above result has some additional consequences.
	
\begin{theorem}
	\label{markovConvergenceFromIntertwining}
	
	Let
    	$ E $ be a compact, separable metric space, 
    	$ A $ be the generator of the Feller semigroup $ S( t ) $ on $ C( E ) $, and
    	$ D $ be a core for $ A $ that is invariant under $ A $.
	For each $ n \ge 1 $, let
    	$ E_n $ be a finite set endowed with the discrete topology, 
    	$ Z_n $ be a Markov chain on $ E_n $,
    	$ \gamma_n \colon E_n \to E $ be any function, and
    	$ L_n \colon D_n \subset C( E ) \to C( E_n ) $ be a linear operator.
    Denote
    	the transition operator of $ Z_n $ by $ S_n $
    	and
    	the projection $ f \mapsto f \circ \gamma_n $ by $ \pi_n \colon C( E ) \to C( E_n ) $.
Let
	$ \{ \delta_n \}_{ n \ge 1 } $ and $ \{ \eps_n \}_{ n \ge 1 } $ be positive sequences converging to zero such that
	$ 
		\eps_n^{ -1 } 
		\delta_n
			\to
				1
			.
	$
	Suppose that for $ f \in D $, the following statements hold:
	\begin{enumerate}[ label = (\alph*) ]
		\item
		\label{generator relation in general result}
		$
			\delta_n^{ - 1 }
			( S_n - \mb 1 )
			L_n
			f
				=
					L_n
					A
					f
		$
		for large $ n $, and

		\item
		\label{convergence condition in general result}
		$
			(
				L_n
			-	\pi_n
			)
				f
			\longrightarrow
				0
		$
		as $ n \to \infty $ (the sequence need only be defined for large $ n $).
		
	\end{enumerate}

	\noindent
	Then, 
    \begin{enumerate}[ label = (\roman*) ]
    	
    	\item
    	\label{semigroupConvergenceInGeneralTheorem}
    	the discrete semigroups 
		$ 
			\{ 
				1, S_n, S_n^2, ... 
			\}_{ n \ge 1 } 
		$ 
		converge to 
		$ \{ S(t) \}_{ t \ge 0 } $ in the following sense: 
		for all $ f \in C( E ) $ and $ t \ge 0 $, 
    	$$
			S_n^{ \floor{ t/\eps_n } }
			\pi_n
			f
				\xrightarrow[ n \to \infty ]{}
        			S(t)
    				f
    	$$

    	\item
    	\label{uniformConvergenceInGeneralTheorem}
    	the above convergence is uniform in $ t $ on bounded intervals, and

    	\item
    	\label{pathConvergenceInGeneralTheorem}
    	if $ A $ is conservative and the distributions of
    	$
    		\gamma_n( Z_n( 0 ) )
    	$
    	converge, say to $ \mu $,
    	then we have the convergence of paths
    	$$
    		\gamma_n( Z_n \floor{ t/\eps_n } )
    			\Longrightarrow
    				F( t )
    	$$
    	in the Skorokhod space $ D( [ 0, \infty ), E ) $, where $ F( t ) $ is a Feller process with initial distribution $ \mu $ and generator $ A $.

    \end{enumerate}

\end{theorem}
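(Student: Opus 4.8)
The plan is to deduce \ref{semigroupConvergenceInGeneralTheorem} and \ref{uniformConvergenceInGeneralTheorem} from the Trotter--Kurtz approximation theorem for discrete-parameter semigroups, and then to obtain the path convergence \ref{pathConvergenceInGeneralTheorem} from a standard theorem on the convergence of time-rescaled Markov chains to Feller processes (as in Ethier and Kurtz). The bridge between our hypotheses and this abstract machinery is Proposition~\ref{convergenceFromIntertwining}, which is designed precisely to turn the generator relation \ref{generator relation in general result} into a statement about approximating sequences.

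First I would establish \ref{semigroupConvergenceInGeneralTheorem} and \ref{uniformConvergenceInGeneralTheorem}. Apply Proposition~\ref{convergenceFromIntertwining} with $ V = C( E ) $, $ V_n = C( E_n ) $, the given operators $ L_n $, and $ A_n = \delta_n^{ -1 }( S_n - \mb 1 ) $. Hypothesis \ref{generator relation in general result} is exactly condition \ref{generalIntertwiningCondition} of that proposition, and hypothesis \ref{convergence condition in general result} is its condition (ii); here the assumption that $ D $ is invariant under $ A $ is essential, as it is what allows \ref{convergence condition in general result} to be applied to $ A f $ and hence to identify the limit of $ A_n L_n f = L_n A f $. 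The proposition yields, for each $ f \in D $, that $ f_n \ldef L_n f $ satisfies $ f_n \to f $ and $ \delta_n^{ -1 }( S_n - \mb 1 ) f_n \to A f $ in the sense of Definition~\ref{convergenceInDifferentSpaces}. Since $ \eps_n^{ -1 } \delta_n \to 1 $ and the $ \pi_n $ are uniformly bounded, the same sequences satisfy $ \eps_n^{ -1 }( S_n - \mb 1 ) f_n \to A f $. As $ D $ is a core for $ A $ and each $ S_n $ is a contraction on $ C( E_n ) $, the Trotter--Kurtz theorem then gives
$$
	S_n^{ \floor{ t/\eps_n } } \pi_n f
		\longrightarrow
			S( t ) f
$$
for every $ f \in D $, uniformly for $ t $ in bounded intervals. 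The extension to arbitrary $ f \in C( E ) $ is a routine density argument: $ D $ is dense in $ C( E ) $, the operators $ S_n^{ \floor{ t/\eps_n } } $ and $ S( t ) $ are contractions, and $ \norm{ \pi_n } \le 1 $, so approximating $ f $ by elements of $ D $ controls the error uniformly in $ n $ and $ t $.

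For \ref{pathConvergenceInGeneralTheorem}, I would note that the time-rescaled chains $ \gamma_n( Z_n( \floor{ \cdot /\eps_n } ) ) $ are images under $ \gamma_n $ of the Markov chains $ Z_n $, and that the expectations relevant to their convergence are governed by the operators whose semigroup convergence was just established, since
$$
	\mathbb E \big[
		f( \gamma_n( Z_n( k ) ) )
	\,\big|\,
		Z_n( 0 ) = x
	\big]
		=
			( S_n^k \pi_n f )( x )
			.
$$
This places us in the setting of the standard theorem on convergence of mapped Markov chains to Feller processes. The hypothesis that $ A $ is conservative guarantees that $ S( t ) $ is an honest Feller semigroup, so the limit $ F $ is a genuine $ E $-valued Feller process with no loss of mass; compactness of $ E $ together with the semigroup convergence from \ref{semigroupConvergenceInGeneralTheorem}--\ref{uniformConvergenceInGeneralTheorem} supplies tightness in $ D( [ 0, \infty ), E ) $, while the Markov property combined with the same semigroup convergence gives convergence of the finite-dimensional distributions. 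Since the initial distributions $ \gamma_n( Z_n( 0 ) ) $ converge to $ \mu $, we conclude $ \gamma_n( Z_n( \floor{ \cdot /\eps_n } ) ) \Rightarrow F $, with initial distribution $ \mu $ and generator $ A $.

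The routine parts above are the density extension and the verification of the hypotheses of the two invoked approximation theorems; the step that most requires care is the path convergence \ref{pathConvergenceInGeneralTheorem}, where one must match the \emph{projected} semigroup convergence (phrased through $ \pi_n $ and $ \gamma_n $) to the precise form demanded by the abstract theorem and invoke conservativeness to rule out escape of mass, so that convergence genuinely takes place in the Skorokhod space rather than merely in the sense of finite-dimensional distributions. The essential structural input throughout is the $ A $-invariance of the core $ D $, which is what converts the pointwise generator commutation \ref{generator relation in general result} into the convergence $ A_n f_n \to A f $ required by Trotter--Kurtz.
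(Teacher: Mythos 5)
Your proposal is correct and takes essentially the same approach as the paper: both apply Proposition \ref{convergenceFromIntertwining} with $A_n = \delta_n^{-1}(S_n - \mathbf{1})$ to produce the approximating sequences $f_n = L_n f$ with $f_n \to f$ and $\eps_n^{-1}(S_n - \mathbf{1})f_n \to Af$, and then invoke the standard Ethier--Kurtz results (the Trotter--Kurtz discrete-semigroup approximation theorem for \ref{semigroupConvergenceInGeneralTheorem}--\ref{uniformConvergenceInGeneralTheorem}, and the Markov-chain-to-Feller-process convergence theorem for \ref{pathConvergenceInGeneralTheorem}). The extra details you spell out (the density extension to all of $C(E)$ and the tightness/finite-dimensional-distribution argument) are already contained in those cited theorems, so your write-up is just a more explicit version of the paper's proof.
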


\begin{proof}

	This is a combination of Proposition \ref{convergenceFromIntertwining} and standard convergence results.  In particular, for $ f \in D $, we can define the sequence $ f_n = L_n f $ for large $ n $ and obtain the convergence
    $$
    	f_n
    		\longrightarrow
    			f
    	\quad
		\qquad
    		\text{and}
    	\quad
		\qquad
			\delta_n^{ - 1 }
			( S_n - \mb 1 )
			f_n
        		\longrightarrow
           			A
        			f
			.
    $$
    
    \noindent
	Recalling that
	$ 
		\eps_n^{ -1 } 
		\delta_n
			\to
				1
			,
	$
	we then obtain the convergence
    $
		\eps_n^{ - 1 }
		( S_n - \mb 1 )
		f_n
      		\to
				A
       			f
			.
    $
    Applying Chapter 1 Theorem 6.5 in \cite{EthKurtzBook} then yields the convergence of semigroups in \ref{semigroupConvergenceInGeneralTheorem} and \ref{uniformConvergenceInGeneralTheorem}. 
	Applying Chapter 4 Theorem 2.12 in \cite{EthKurtzBook} yields the path convergence in \ref{pathConvergenceInGeneralTheorem}.
\end{proof}

\section{The Limiting Generator}
\label{section limit of generator}

In this section, we introduce the generator of a Feller process on $ [ 0, 1 ] $ that will be identified as the limiting process.
We describe this generator both on a core of polynomials and on its full domain.
However, the core description is sufficient for the analysis that will follow.

Let $ \mc P $ denote the space of polynomials on $ [ 0, 1 ] $ equipped with the supremum norm.
We will study the operator $ \mc B \colon \mc P \to \mc P $ and the functional $ \eta \colon \mc P \to \mbb R $ given by
$$
	(
    	\mc B 
    	f
	)
	( x )
		=
			x (1-x)
			f''(x)
		-
			\alpha
			f'(x)
		,
		\quad
		x \in [ 0, 1 ]
		,
$$

\noindent
and
\begin{align}
	\label{functionalUsingDerivative}
	\eta(f)
		& \ldef
           	\int_0^1 
           		( f(x) - f(0) )
           		x^{ - \alpha - 1 } 
           		(1-x)^{ \alpha - 1 }
           		\,
           		dx
		\nonumber
		\\
		& = 
        	\int_0^1 
        		f'(x)
        		x^{ - \alpha } 
        		(1-x)^{ \alpha }
        		\alpha^{-1}
					\,
        		dx
		.
\end{align}

\noindent
Letting
$
	\mbb N
		= 
			\{ 0, 1, 2, \ldots \}
		,
$
we define a family of polynomials 
$ 
	\{
		h_n
	\}_{ n \in \eigenIndexSet }
$
by
$$
	h_n( x )
		= 
    			\sum_{ s = 0 }^n
    				\,
    				x^s
       				( -1 )^{ n - s }
					\frac 	{
								( n - 1 )_s
							}{
								s!
							}
					\frac 	{
            					( s - \alpha )_{ n - s }
							}{
								( n - s )!
							}
		,
			\qquad
			x \in [ 0, 1 ]
		.
$$

\noindent
Note that $ h_0 \equiv 1 $ and $ h_n $ has degree $ n $.
Moreover, these polynomials are related to 
	the Jacobi polynomials $ P_n^{ ( a, b ) } $
	and 
	the shifted Jacobi polynomials $ J_n^{ ( a, b ) } $ 
\cite{rababah2004, szego75} by the identity
$$
	h_n( x )
		=
 				J_n^{ ( \alpha - 1, - \alpha - 1 ) }( x )
		= 
 				P_n^{ ( \alpha - 1, - \alpha - 1 ) }( 2 x - 1 )
		,
			\qquad
			x \in [ 0, 1 ]
		.
$$

\begin{prop}
    \label{propertiesOfB}

    Let
    $
		\mc H
    		=
				\ker
				\eta
	$
    and
    $
    	\eigenvalueGenerator_n
    		=
    			- n ( n - 1 )
    $
    for 
	$
		n \in \eigenIndexSet
		.
	$
	The following statements hold:
    \begin{enumerate}[ label = (\roman*) ]
    
    	\item
		\label{eigenfunctionsOfB}
		$ \mc B h_n = \eigenvalueGenerator_n h_n $ for all $ n \in \eigenIndexSet $,
		
		\item
		\label{eigenbasisOfH}
		the family 
		$ 
			\{
				h_n
			\}_{ n \in \eigenIndexSet }
		$
		is a Hamel basis for $ \mc H $, and
		
		\item
		\label{densityOfH}
		$ \mc H $ is a dense subspace of $ C[ 0, 1 ] $.
    \end{enumerate}

\end{prop}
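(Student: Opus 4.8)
The plan is to handle the three statements in sequence, with a single structural observation tying (i) and (ii) together and a soft functional-analytic argument for (iii). The one analytic input I would record at the outset is that $\mathcal{B}$ is a Sturm--Liouville operator adapted to the weight appearing in $\eta$. Setting $p(x) = x^{-\alpha}(1-x)^{\alpha}$ and $m(x) = x^{-\alpha-1}(1-x)^{\alpha-1}$, a direct differentiation gives $p'(x) = -\alpha\, m(x)$, so that $\mathcal{B}f = \tfrac1m (p f')'$, together with $p(1) = 0$ and the exact identity $\int_\epsilon^1 m(x)\,dx = p(\epsilon)/\alpha$. These relations are the only computation needed, and they also reconcile the two displayed expressions for $\eta$ via integration by parts (the boundary term at $0$ vanishing because $(f(x)-f(0))p(x)\sim f'(0)x^{1-\alpha}\to0$).

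For (i), I would pass through the classical Jacobi differential equation. Since $(\alpha-1)+(-\alpha-1)+2 = 0$, the equation for $P_n^{(\alpha-1,-\alpha-1)}$ collapses to $(1-t^2)P_n'' - 2\alpha P_n' + n(n-1)P_n = 0$; substituting $t = 2x-1$, so that $1-t^2 = 4x(1-x)$ and $h_n(x) = P_n^{(\alpha-1,-\alpha-1)}(2x-1)$, converts this into $x(1-x)h_n'' - \alpha h_n' + n(n-1)h_n = 0$, i.e. $\mathcal{B}h_n = -n(n-1)h_n = \omega_n h_n$. The case $n=0$ is immediate, and $n=1$ is omitted from $\eigenIndexSet$ precisely because $\alpha-1+(-\alpha-1)+2=0$ forces $P_1$ to be constant, which is also consistent with the vanishing leading coefficient $(n-1)_n$ at $n=1$ in the series for $h_n$.

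The key lemma for (ii) is that $\eta(\mathcal{B}f) = 0$ for every polynomial $f$. I would prove this by the cancellation $\int_\epsilon^1 \mathcal{B}f\cdot m\,dx = \int_\epsilon^1 (pf')'\,dx = -p(\epsilon)f'(\epsilon)$ (using $p(1)=0$), while the constant subtracted in $\eta$ contributes $-\mathcal{B}f(0)\int_\epsilon^1 m = f'(0)p(\epsilon)$ (using $\mathcal{B}f(0) = -\alpha f'(0)$ and $\int_\epsilon^1 m = p(\epsilon)/\alpha$); the sum is $p(\epsilon)(f'(0)-f'(\epsilon))\sim -f''(0)\epsilon^{1-\alpha}\to 0$ since $\alpha<1$. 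Combined with (i) this yields $\eta(h_n) = \omega_n^{-1}\eta(\mathcal{B}h_n) = 0$ for $n\ge2$, while $\eta(h_0)=0$ trivially, so every $h_n$ with $n\in\eigenIndexSet$ lies in $\mathcal{H}$. Linear independence is automatic from the distinct degrees $0,2,3,\dots$, and spanning follows from a dimension count on each $\mathcal{P}_{\le N}$: since $\eta(x) = B(1-\alpha,\alpha) = \pi/\sin(\pi\alpha)\neq 0$, the restriction of $\eta$ to $\mathcal{P}_{\le N}$ is nonzero, so $\ker\eta\cap\mathcal{P}_{\le N}$ has dimension $N$, matching the number of available polynomials $h_0,h_2,\dots,h_N$; being independent elements of the kernel they form a basis of it, and letting $N\to\infty$ gives $\mathcal{H} = \mathrm{span}\{h_n : n\in\eigenIndexSet\}$.

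For (iii) I would use that a linear functional on a normed space has dense kernel as soon as it is discontinuous, and exhibit the discontinuity explicitly. Taking $\psi_k(x) = 1-(1-x)^k$ we have $\|\psi_k\|_\infty = 1$, while $\eta(\psi_k) = \tfrac{k}{\alpha}B(1-\alpha,k+\alpha)\sim \tfrac{\Gamma(1-\alpha)}{\alpha}k^{\alpha}\to\infty$; hence for any polynomial $\phi$ the polynomials $\phi - \tfrac{\eta(\phi)}{\eta(\psi_k)}\psi_k$ lie in $\mathcal{H}$ and converge uniformly to $\phi$, so $\mathcal{H}$ is dense in $\mathcal{P}$, which is dense in $C[0,1]$ by Weierstrass. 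I expect the genuine obstacle to be the key lemma $\eta(\mathcal{B}f)=0$: because $p(x)\to\infty$ as $x\to0$, neither $\int \mathcal{B}f\cdot m$ nor $\int m$ converges on its own, so the argument must be organized entirely around the exact identity $\int_\epsilon^1 m = p(\epsilon)/\alpha$ and the subtraction built into $\eta$, with the surviving limit tamed by $\alpha<1$.
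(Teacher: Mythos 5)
Your proposal is correct and takes essentially the same route as the paper: (i) via the Jacobi differential equation for $P_n^{(\alpha-1,-\alpha-1)}$, (ii) via the key lemma $\eta(\mathcal{B}f)=0$ for all polynomials $f$ (so that the $h_n$, being eigenfunctions with nonzero eigenvalue, lie in $\ker\eta$), with independence from distinct degrees and spanning from a codimension-one count, and (iii) by showing $\eta$ is discontinuous (unbounded on the unit-norm functions $(1-x)^k$), whence its kernel is dense. The only differences are cosmetic: you derive the eigenvalue equation and the dense-kernel principle explicitly where the paper cites Szeg\H{o} and Bollob\'as, and you prove $\eta(\mathcal{B}f)=0$ by an $\epsilon$-truncated integration by parts where the paper cancels the two equivalent integral formulas for $\eta$.
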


\begin{proof}

    The claim in \ref{eigenfunctionsOfB} can be obtained from the classical theory of Jacobi polynomials 
    (e.g. (4.1.3), (4.21.2), and (4.21.4) in \cite{szego75}).
    
    Noting that $ h_n $ has degree $ n $ shows that the family
    	$ 
    		\{ h_n \}_{ n \in \eigenIndexSet } 
    	$ 
    is independent.
    Since $ h_0 \equiv 1 $, it clearly lies in $ \mc H $.
    To see that the other $ h_n $ also lie in $ \mc H $, we use \ref{eigenfunctionsOfB} to identify them as elements in the range of $ \mc B $ and observe that this range lies in $ \mc H $.
    Indeed, this can be verified using (\ref{functionalUsingDerivative}): for $ f \in \mc P $, we have that
    \begin{align*}
    	\eta( 
        		\mc B
        		f
    		)
    			& =
                       	\int_0^1 
                       		(
                    			x (1-x)
                    			f''(x)
                    		-
                    			\alpha
                    			f'(x)
    						+
    							\alpha
    							f'(0)							
    						)
                       		x^{ - \alpha - 1 } 
                       		(1-x)^{ \alpha - 1 }
                       		\,
                       		dx
        		\\
    			& =
                       	\int_0^1 
                    		f''(x)
    						x^{ - \alpha } 
                       		(1-x)^{ \alpha }
                       		\,
                       		dx                   		
                    -	\alpha
                       	\int_0^1 
                    		( f'(x) - f'(0) )
    						x^{ - \alpha - 1 } 
                       		(1-x)^{ \alpha - 1 }
                       		\,
                       		dx
        		\\
    			& =
    					\alpha
    					\,
    					\eta( f' )
    				-	\alpha
        				\,
    					\eta( f' )
    			\\
    			& =
    					0
    			.
    \end{align*}

    \noindent
    To obtain equality from the containment
    $ 
    	\text{span}
    	\{
    		h_n
    	\}_{ n \in \eigenIndexSet }
    		\subset
    			\mc H
    		,
    $
    we observe that the former space is a maximal subspace of $ \mc P $ (it has codimension one) while the latter is a proper subspace of $ \mc P $.

    The claim in \ref{densityOfH} will follow from showing that $ \eta $ is not continuous (see Chapter 3 Theorem 2 in \cite{bollobas}).
    To see that this holds, notice that the functions
    $
    	f_j(x)
    		=
    			( 1 - x )^j
    		,
    			\,
    			j \ge 1
    		,
    $
    have norm 1 but their images under $ \eta $ are unbounded:
    \begin{align*}
    	\eta( f_j )
    		& =
            	-
    			\int_0^1 
            		j
    				x^{ - \alpha } 
            		( 1 - x )^{ j - 1 + \alpha }
            		\alpha^{ -1 }
    					\,
            		dx
    		\\
    		& =
            	-
    			\frac	{ 
    						\Gamma( 1 - \alpha ) 
    						\Gamma( j + \alpha ) 
    					}{ 
    						\alpha 
    						\,
    						\Gamma( j )
    					}
    		.
    		\qedhere
    \end{align*}
\end{proof}

\begin{prop}

	The operator $ \mc B\vert_{ \mc H } $ is closable and its closure, $ \barr{ \mc B\vert_{ \mc H } } $, is the generator of a Feller semigroup on $ C[ 0, 1 ] $.
	
	\label{existence of the generator}
\end{prop}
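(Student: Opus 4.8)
The plan is to verify the three hypotheses of the Hille--Yosida--Ray theorem for Feller generators on a compact state space (Chapter 4, Theorem 2.2 in \cite{EthKurtzBook}, applied with $E = [0,1]$): that $\mc H$ is dense in $C[0,1]$, that $\mc B\vert_{\mc H}$ satisfies the positive maximum principle, and that the range of $\lambda - \mc B\vert_{\mc H}$ is dense in $C[0,1]$ for some $\lambda > 0$. Density is exactly Proposition \ref{propertiesOfB}\ref{densityOfH}, and the range condition is immediate from the eigenfunction structure of Proposition \ref{propertiesOfB}\ref{eigenfunctionsOfB}: for $\lambda > 0$ and $n \in \eigenIndexSet$ we have $(\lambda - \mc B) h_n = (\lambda + n(n-1)) h_n$ with $\lambda + n(n-1) > 0$, so each $h_n$, and hence the whole span $\mc H = \operatorname{span}\{ h_n \}_{n \in \eigenIndexSet}$, lies in $\mathcal R(\lambda - \mc B\vert_{\mc H})$; since $\mc H$ is dense, so is the range.

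The substantive step is the positive maximum principle: if $f \in \mc H$ attains $\sup_{[0,1]} f = f(x_0) \ge 0$, then $\mc B f(x_0) \le 0$. For $x_0 \in (0,1)$ this is the usual interior-maximum computation, since $f'(x_0) = 0$ and $f''(x_0) \le 0$ give $\mc B f(x_0) = x_0(1-x_0) f''(x_0) \le 0$. At $x_0 = 1$ the diffusion coefficient vanishes and one-sided monotonicity forces $f'(1) \ge 0$, so $\mc B f(1) = -\alpha f'(1) \le 0$, using $\alpha > 0$ in the $(\alpha,0)$ regime.

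The only delicate case is $x_0 = 0$, and this is precisely where the boundary condition encoded in $\mc H = \ker \eta$ does the work; note that at $0$ the naive calculus bound gives only $\mc B f(0) = -\alpha f'(0) \ge 0$, the wrong sign. Instead, if the supremum is attained at $0$ then $f(x) - f(0) \le 0$ for every $x$, so the integrand in the first expression for $\eta$ in \eqref{functionalUsingDerivative} is nonpositive against the strictly positive weight $x^{-\alpha-1}(1-x)^{\alpha-1}$ (integrable at both endpoints because $0 < \alpha < 1$ here). Since $\eta(f) = 0$, this forces $f \equiv f(0)$, whence $\mc B f(0) = 0 \le 0$. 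Thus a nonconstant element of $\mc H$ can never attain its maximum at the left endpoint, and the positive maximum principle holds.

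With the three hypotheses in place, the cited theorem yields that $\mc B\vert_{\mc H}$ is closable and that $\barr{\mc B\vert_{\mc H}}$ generates a strongly continuous, positive, contraction semigroup on $C[0,1]$; since $h_0 \equiv 1 \in \mc H$ with $\mc B\, 1 = 0$, the semigroup is conservative, hence Feller. I expect the positive maximum principle at $x_0 = 0$ to be the main obstacle, since it is the one point where the argument must invoke the boundary functional $\eta$ rather than elementary calculus, and it is exactly the reason the domain is taken to be $\mc H = \ker \eta$ rather than all of $\mc P$.
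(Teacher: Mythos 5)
Your proof is correct and takes essentially the same route as the paper: Hille--Yosida, with density and the range condition supplied by Proposition \ref{propertiesOfB} via the eigenfunctions $h_n$, ordinary calculus at maxima away from $0$, and at the left endpoint the observation that $f \le f(0)$ together with $\eta(f)=0$ forces $f \equiv f(0)$. One small imprecision: the weight $x^{-\alpha-1}(1-x)^{\alpha-1}$ is itself \emph{not} integrable near $0$; what makes the argument (and $\eta$) well defined is that $f(x)-f(0)=O(x)$ for polynomial $f$, so the full integrand is $O(x^{-\alpha})$ there.
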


\begin{proof}

    We show that $ \mc B\vert_{ \mc H } $ satisfies the conditions of the Hille-Yosida Theorem.
    For $ \lambda > 0 $, Proposition \ref{propertiesOfB}\ref{eigenfunctionsOfB}-\ref{eigenbasisOfH} show that the range of 
    $ 
    	\lambda - \mc B\vert_{ \mc H }
    $
    is exactly $ \mc H $.
    Proposition \ref{propertiesOfB}\ref{densityOfH} then tells us that 
    this range, as well as the domain of
    $ 
    	\mc B\vert_{ \mc H } 
    $,
    is dense in $ C[ 0, 1 ] $.

    To establish the positive-maximum principle, suppose that
    $ 
    	f 
    		\in 
    			\mc H 
    $
    has a nonnegative maximum at $ y \in [ 0, 1 ] $.
    If $ y \neq 0 $, the tools of differential calculus show that
    $
    	( 
    	\mc B\restrictedTo{ \mc H } 
    	f
    	)( y )
    		\le 
    			0
    		,
    $
    as desired.
    When $ y = 0 $, consider the element $ F \in L^1[ 0, 1 ] $ given by
    $$
    	F( x )
    		=
    			( f( x ) - f( 0 ) )
           		x^{ - \alpha - 1 } 
           		(1-x)^{ \alpha - 1 }			
    $$

    \noindent
    almost everywhere. 
    Since $ f(x) \le f( 0 ) $ on $ [ 0, 1 ] $, the norm of $ F $ is given by
    \begin{align*}
    	\norm F
    	_1
    		& = 
               	\int_0^1 
               		| f( x ) - f( 0 ) |
               		x^{ - \alpha - 1 } 
               		(1-x)^{ \alpha - 1 }
               		\,
               		dx
    		\\
    		& = 
               	-
    			\int_0^1 
               		( f( x ) - f( 0 ) )
               		x^{ - \alpha - 1 } 
               		(1-x)^{ \alpha - 1 }
               		\,
               		dx
    		\\
    		& = 
    			-
    			\eta( f )
    		.			
    \end{align*}

    \noindent
    Recalling that $ f \in \mc H = \ker \eta $, it follows that 
    $
    	F = 0
    $
    almost everywhere.
    Together with the continuity of $ f $, this implies that 
    $ 
    	f 
    		\equiv 
    			f( 0 ) 
    		,
    $
    and consequently,
    $
    	( 
    	\mc B\restrictedTo{ \mc H } 
    	f
    	)( y )
    		\le
    			0
    		.
    $
\end{proof}

The final result in this section is the explicit description of the generator $ \barr{ \mc B\vert_{ \mc H } } $ and its domain 
$ 
	\text{Dom}( \barr{ \mc B\vert_{ \mc H } } ) 
$.

To begin, we define an operator $\hat{ \mc L} \colon C[ 0, 1 ] \cap C^2( 0, 1 ) \to C( 0, 1 ) $ by
$$
	\hat{ \mc L} f ( x )
		=
			x ( 1 - x )
			f''( x )
		-	\alpha
			f'( x )
		.
$$

\noindent
We will write $ \hat{ \mc L} f \in C[ 0, 1 ] $ whenever $\hat{ \mc L} f $ can be continuously extended to $ [ 0, 1 ] $.  Recalling the definition of $\mc L$ and $\mc D$ from Theorem \ref{thm main}, we see that $\mc L$ is the restriction of $\hat{ \mc L} $ to $\mc D$. 
We also define functions $ m \colon ( 0, 1 ] \to \mbb R $ and $ s \colon ( 0, 1 ] \to \mbb R $ by
$$
	m( x )
		= 	\int_1^x
				t^{ - 1 - \alpha } 
				( 1 - t )^{ \alpha - 1 }
				\, dt
		=
			-	\alpha^{ -1 }
				x^{ -\alpha } 
				( 1 - x )^{ \alpha } 
$$
and
$$
	s( x )
		= 	\int_1^x
				t^{ \alpha } 
				( 1 - t )^{ - \alpha } 
				\, dt 
		.
$$

\noindent
Note that $\hat{ \mc L} $ admits the factorization
$$
	\hat{\mc L}
	f
		=
			\frac{ 1 }{ m' }
			\left(
				\frac{ f' }{ s' }
			\right)'
		,
$$

\noindent
from which we obtain the formula
\begin{equation}
	f(x) - f(c)
		=
    			\frac{ f'(c) }{ s'(c) }
    			( s( x ) - s (c) )
    		+ 	\int_c^x
    				\int_c^y
    					\hat{\mc L} f(z)
    					m'( z )
    					dz
    				\,
    				s'(y)
    				dy
		,
			\quad
			x, c \in ( 0, 1 )
		.
	\label{generalized FTC}
\end{equation}

\noindent
Another identity that will be useful is
\begin{equation}
		\int_1^y
			m'( z )
			dz
		\,
		s'(y)
    		= 
        			m( y )
        			\,
        			s'(y)
    		= 
            		-\alpha^{ -1 }
			,
				\qquad
				y \in ( 0, 1 )
			.
	\label{integral identity for dominated convergence}
\end{equation}

\begin{prop}
    \label{description of generator and domain}
    The identity
    $ 
    	\barr{ \mc B\vert_{ \mc H } } 
    		=
            	\mc L
    $
holds,
    where $\mc L$ is as defined in Theorem \ref{thm main}.
\end{prop}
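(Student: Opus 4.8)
The plan is to identify $\mc L$ with the closure $\barr{\mc B\vert_{\mc H}}$ by exhibiting $\mc L$ as a \emph{dissipative extension} of $\mc B\vert_{\mc H}$ and then invoking the maximality of Feller generators. Concretely, I would establish three things: that $\mc L$ extends $\mc B\vert_{\mc H}$; that $\mc L$ satisfies the positive maximum principle (hence is dissipative); and that every element of $\mathrm{Dom}(\barr{\mc B\vert_{\mc H}})$ lies in $\mc D$ with matching action, i.e. $\barr{\mc B\vert_{\mc H}} \subseteq \mc L$. Granting these, the standard fact that a generator of a strongly continuous contraction semigroup admits no proper dissipative extension (Chapter 1 of \cite{EthKurtzBook}) forces $\mc L = \barr{\mc B\vert_{\mc H}}$, since $\barr{\mc B\vert_{\mc H}}$ is such a generator by Proposition \ref{existence of the generator}.

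The extension step is routine: every $p \in \mc H$ is a polynomial, so $p \in C^2(0,1)$, $\hat{\mc L}p$ is a polynomial and thus extends continuously to $[0,1]$ (giving \ref{continuous image under generalized diff operator}), $\eta(p)=0$ is exactly the defining condition of $\mc H$ (giving \ref{boundary condition at zero}), and $p'(x)(1-x)^\alpha \to 0$ as $x\to 1$ because $p'$ is bounded and $\alpha>0$ (giving \ref{boundary condition at one}); moreover $\mc L p=\hat{\mc L}p=\mc B p$. For the positive maximum principle, suppose $f\in\mc D$ attains a nonnegative maximum at $y$. If $y\in(0,1)$ this is the usual computation $\hat{\mc L}f(y)=y(1-y)f''(y)\le 0$. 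If $y=0$, condition \ref{boundary condition at zero} together with $f\le f(0)$ forces the nonpositive integrand to vanish almost everywhere, whence $f\equiv f(0)$ and $\mc Lf(0)=0$, exactly as in the proof of Proposition \ref{existence of the generator}. If $y=1$, I would rewrite \ref{boundary condition at one} as $f'/s'\to 0$ and use the factorization $\hat{\mc L}f=(f'/s')'/m'$: were the continuous extension $\hat{\mc L}f(1)$ strictly positive, then $(f'/s')'=\hat{\mc L}f\cdot m'>0$ near $1$, so $f'/s'$ would increase to its limit $0$ and hence be negative near $1$, making $f$ strictly decreasing there and contradicting the maximum at $1$; thus $\hat{\mc L}f(1)\le 0$.

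The heart of the argument is the inclusion $\barr{\mc B\vert_{\mc H}}\subseteq\mc L$, which is where the scale and speed data enter. Given $f\in\mathrm{Dom}(\barr{\mc B\vert_{\mc H}})$, choose polynomials $f_n\in\mc H$ with $f_n\to f$ and $g_n:=\hat{\mc L}f_n\to g$ uniformly. Inserting $g_n$ into the generalized fundamental theorem of calculus \eqref{generalized FTC} at a fixed base point $c\in(0,1)$, and using the uniform boundedness of $\{g_n\}$ with the local integrability of $m'$ and $s'$ on $(0,1)$, I would show that $f_n'(c)$ converges and pass to the limit to obtain the same representation for $f$ with $\hat{\mc L}f=g$, giving $f\in C^2(0,1)$ and \ref{continuous image under generalized diff operator}. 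To transfer the boundary conditions to the limit, I would work with $G_n:=f_n'/s'$, which satisfies $G_n'=g_n m'$, write $\eta(f_n)=\alpha^{-1}\int_0^1 G_n\,dx$ using the second form of \eqref{functionalUsingDerivative}, and bound $|G_n(x)|\le |G_n(c)|+\|g_n\|_\infty|m(x)-m(c)|$; the identity \eqref{integral identity for dominated convergence} furnishes the explicit form of $m$, which is $L^1(0,1)$ since $\alpha<1$, so dominated convergence yields $\eta(f)=\lim_n\eta(f_n)=0$, i.e. \ref{boundary condition at zero}. The same representation $G_n(x)=G_n(c)+\int_c^x g_n m'$, evaluated as $x\to 1$ where each $G_n(1^-)=0$, passes to the limit (the tail of $m'$ being integrable near $1$) to give $f'/s'\to 0$ at $1$, i.e. \ref{boundary condition at one}. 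Hence $f\in\mc D$ and $\mc Lf=g=\barr{\mc B\vert_{\mc H}}f$.

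\textbf{Main obstacle.} I expect the genuinely delicate point to be this last transfer of the boundary conditions: the functional $\eta$ encoding \ref{boundary condition at zero} is \emph{not} continuous on $C[0,1]$ (Proposition \ref{propertiesOfB}), so membership in $\mc D$ cannot be read off by naively taking limits, and one must instead produce a uniform integrable majorant for $f_n'/s'$ and argue by dominated convergence — precisely the role of \eqref{integral identity for dominated convergence}. Once $\barr{\mc B\vert_{\mc H}}\subseteq\mc L$ and the dissipativity of $\mc L$ are in hand, the equality $\mc L=\barr{\mc B\vert_{\mc H}}$ is immediate from the uniqueness of dissipative extensions of a generator.
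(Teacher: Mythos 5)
Your proposal is correct and follows essentially the same route as the paper's proof: establish the inclusion $\barr{\mc B\vert_{\mc H}} \subseteq \mc L$ by approximating $f \in \mathrm{Dom}(\barr{\mc B\vert_{\mc H}})$ along the generalized fundamental theorem of calculus \eqref{generalized FTC}, with the dominated-convergence majorants supplied by \eqref{integral identity for dominated convergence}; verify the positive maximum principle for $\mc L$ (the $y=0$ case via the kernel of $\eta$, exactly as in Proposition \ref{existence of the generator}); and conclude equality from the maximality of generators among dissipative extensions (the paper cites Lemma 19.12 of \cite{kallenbergBook} where you cite \cite{EthKurtzBook}). The remaining differences are cosmetic rather than structural: the paper anchors the FTC at the base point $1$, where condition \ref{boundary condition at one} kills the boundary term (avoiding your auxiliary step that $f_n'(c)$ converges and the integration by parts needed to pass from $\int_0^1 f'/s'$ back to the defining form of $\eta(f)$), and it treats a maximum at $y=1$ by an L'H\^opital computation giving $\mc L f(1) = -\alpha f'(1) \le 0$ instead of your monotonicity contradiction.
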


\begin{proof}

	We begin by showing that the following holds:
    \begin{equation}
    	f(x) - f(1)
    		=
    			\int_1^x
    				\int_1^y
    					\mc Lf(z)
    					m'( z )
    					dz
    				\,
    				s'(y)
    				dy
    		,
    			\quad
				f \in \mc D
			,
				\,
    			x \in [ 0, 1 ]
			.
		\label{generalized FTC on D}
    \end{equation}

	\noindent
	To do this, we will take limits in (\ref{generalized FTC}).
	First we take the limit $ c \to 1 $.
	The term $ \frac{f'(c)}{s'(c)} $ converges to zero due to \ref{boundary condition at one} (see Theorem \ref{thm main}).
	The limit of the integral is handled by the dominated convergence theorem -- a suitable bound follows from the boundedness of $ \mc L f $ and (\ref{integral identity for dominated convergence}).
	This establishes the formula for $ x \in ( 0, 1 ) $. 
	Taking now the limit $ x \to 0 $ (the dominated convergence theorem can be applied as before) establishes the $ x = 0 $ case.
	The $ x = 1 $ case is trivial.

	Now we show that 
	$
		\text{Dom}( \barr{ \mc B\vert_{ \mc H } } )
			\subset
				\mc D 
			.
	$
	Fixing $ f \in \text{Dom}( \barr{ \mc B\vert_{ \mc H } } ) $, there exists a sequence $ \{ f_n \}_{ n \ge 1 } $ of functions in $ \mc H $ such that
    \begin{equation}
    	f_n 
    		\longrightarrow 
    			f
    		\qquad
    			\text{and}
    		\qquad
    	\mc B
    	f_n
    		\longrightarrow 
    			\barr{ \mc B \vert_{ \mc H } }
    			f
			.
		\label{core approximation of domain}
    \end{equation}

    \noindent
    Noting that $ f_n \in \mc D $ for all $ n $, we can apply (\ref{generalized FTC on D}). 
    In this case, the identity $ \mc B f_n = \mc L f_n $ yields
    \begin{equation}
    	f_n(x) - f_n(1)
    		=
    			\int_1^x
    				\int_1^y
    					\mc B f_n(z)
    					m'( z )
    					dz
    				\,
    				s'(y)
    				dy
    		,
    			\quad
    			x \in [ 0, 1 ]
    		.
			\label{generalized FTC on core}
    \end{equation}
    
    \noindent
	Using (\ref{core approximation of domain}) and the dominated convergence theorem, we can take the limit $ n \to \infty $ above.
	A suitable bound follows from the boundedness of the sequence $ \{ \mc B f_n \} $ and (\ref{integral identity for dominated convergence}).
	We obtain 	
	\begin{equation}
    	f(x) - f(1)
    		=
    			\int_1^x
    				\int_1^y
    					\barr{ \mc B \vert_{ \mc H } }
    					f(z)
    					m'( z )
    					dz
    				\,
    				s'(y)
    				dy
    		,
    			\quad
    			x \in [ 0, 1 ]
    		.
		\label{generalized FTC on domain}
    \end{equation}

    \noindent
    Together with the fact that 
    $ 
        \barr{ \mc B \vert_{ \mc H } } 
        f
    		\in 
    			C( 0, 1 ) 
    		,
    $ 
    $ 
        m
    		\in 
    			C^1( 0, 1 ) 
    $ 
    and 
    $ 
    	s
    		\in 
    			C^2( 0, 1 ) 
    		,
    $ 
    this expression implies that $ f \in C^2( 0, 1 ) $.
	Differentiating the expression yields the identity
    \begin{equation}
    	\barr{ \mc B \vert_{ \mc H } } 
    	f
    		=
    			\frac{ 1 }{ m' }
    			\left(
    				\frac{ f' }{ s' }
    			\right)'
    		=
				\hat{\mc L}
				f
			\quad
				\text{ on }
				( 0, 1 )
			.
		\label{generator and generalized diff operator agree on domain}
    \end{equation}

    \noindent
    This shows that $ f $ satisfies \ref{continuous image under generalized diff operator}. 
    To obtain \ref{boundary condition at zero}, we recall that
    $$
    	\int_0^1 
    		( f_n(x) - f_n(0) )
    		x^{ - \alpha - 1 } 
    		(1-x)^{ \alpha - 1 }
    		\,
    		dx
    			=
    				0
    $$

    \noindent
    for all $ n $ and extend this to $ f $ by taking the limit $ n \to \infty $. 
    Once again, we apply the dominated convergence theorem.
    A preliminary bound can be obtained from (\ref{integral identity for dominated convergence}) and (\ref{generalized FTC on core}):
    \begin{align*}
		\left|
		x^{ -1 } 
		( f_n(x) - f_n(0) )
		\right|
    		& =
    			x^{ -1 }
        		\left|
				\int_0^x
    				\int_1^y
    					\mc B f_n(z)
    					m'( z )
    					dz
    				\,
    				s'(y)
    				dy
        		\right|
			\\
			& \le
    			x^{ -1 }
        		\norm{ \mc B f_n }
				\int_0^x
    				\int_y^1
    					m'( z )
    					dz
    				\,
    				s'(y)
    				dy			
			\\
			& =
        		\norm{ \mc B f_n }
				\alpha^{ -1 }
			.
    \end{align*}

    \noindent
	The boundedness of the sequence $ \{ \mc B f_n \} $ then provides a suitable bound.

    To obtain \ref{boundary condition at one}, we differentiate (\ref{generalized FTC on domain}) and compute
    \begin{align*}
    	\left|
		\frac{ 
				f'(x) 
			}{ 
				s'(x) 
			}
		\right|
        		& =
        				\left|
						\int_1^x
        					\barr{ \mc B \vert_{ \mc H } }
        					f(z)
        					m'( z )
        					dz
        				\,
						\right|
				\\
        		& \le
        				\norm{ \barr{ \mc B \vert_{ \mc H } } f }
						\int_x^1	
        					m'( z )
        					dz
        				\,
				\\
        		& =
        				\norm{ \barr{ \mc B \vert_{ \mc H } } f }
						( -m(x) )
				\\
        		& \xrightarrow[ x \to 1 ]{}
        				0
				.
    \end{align*}

    We have shown that 
    $
    	\text{Dom}( \barr{ \mc B\vert_{ \mc H } } )
			\subset
				\mc D 
	$ 
	and
    $ 
    	\barr{ \mc B\vert_{ \mc H } } 
    		=
            	\mc L
    $
    on $ \text{Dom}( \barr{ \mc B\vert_{ \mc H } } ) $ (see (\ref{generator and generalized diff operator agree on domain})). 
	Therefore, it only remains to show that 
	$ 
		\text{Dom}( \barr{ \mc B\vert_{ \mc H } } ) 
			= 
				\mc D 
			.
	$
    From Lemma 19.12 in \cite{kallenbergBook}, it suffices to show that $ \mc L $ satisfies the positive maximum principle.
    To this end, suppose that $ f \in \mc D $ has a nonnegative maximum at $ y \in [ 0, 1 ] $.
	If $ y \neq 1 $, then the desired inequality can be obtained as in Proposition \ref{existence of the generator}.
    If $ y = 1 $, we use 
    	\ref{continuous image under generalized diff operator}, 
		L'H\^opital's rule, 
		\ref{boundary condition at one}, 
		and 
		(\ref{integral identity for dominated convergence})
		to establish the existence of limits
	\begin{align*}
		\mc L
		f
		(1)
			& = 
					\lim_{ x \to 1 }
						\mc L f ( x )
			\\
			& = 
					\lim_{ x \to 1 }
            			\frac{ 1 }{ m'(x) }
            			\left(
            				\frac{ f' }{ s' }
            			\right)'
							\big(
									x
							\big)
			\\
			& = 
					\lim_{ x \to 1 }
            			\frac{ 1 }{ m(x) }
        				\frac{ f'(x) }{ s'(x) }
			\\
			& = 
					\lim_{ x \to 1 }
        				-\alpha 
						f'(x)
			\\
			& = 
					\lim_{ x \to 1 }
        				-\alpha 
        				\,
						\frac{ f(x) - f(1) }{ x - 1 }
			\\
			& = 
    				-\alpha 
					f'(1)
			.
	\end{align*}

	\noindent
	Noticing that $ f'(1) \ge 0 $ concludes the proof.
\end{proof}

\section{Generator Relations}
\label{section intertwining leftmost column with limit}

In this section, we show that our generators satisfy the commutation relations appearing in Theorem \ref{markovConvergenceFromIntertwining}.
Here, we rely on an alternative description of the limiting generator in terms of Bernstein polynomials.

For $ k \ge 0 $, let $ \mc P_k $ be the subspace of $ \mc P $ consisting of polynomials with degree at most $ k $.
Similarly, define
$$
	\mc H_k
		=
			\mc H \cap \mc P_k
		,
			\qquad
			k \ge 0
		.
$$

\noindent
Recall the Bernstein polynomials
$$
	b_{ i, k } ( x )
		=
			\binom{ k }{ i }
			x^i
			( 1 - x )^{ k - i }
		,
			\quad
			i \in \mbb Z
		,
			\,
			k \ge 0
		.
$$

\noindent
Note that $ b_{ i, k } \equiv 0 $ whenever $ i < 0 $ or $ i > k $.
For each $ k \ge 0 $, the collection 
$
	\{
		b_{ i, k }
	\}_{ i = 0 }^k
$
forms a basis of $ \mc P_k $
and a partition of unity -- that is,
$
	\sum_{ i = 0 }^k
		b_{ i, k }
			\equiv
				1
			.
$
We also have the relations
\begin{align}
	b_{ i, k }'
		& =
			k
			(
				b_{ i - 1, k - 1 }
			-	b_{ i, k - 1 }
			)
		,
		\label{bernsteinDerivative}
		\\
	b_{ i, k }
		& =
    			\tfrac{ k + 1 - i }{ k + 1 }
    			\,
    			b_{ i, k + 1 }
    		+	\tfrac{ i + 1 }{ k + 1 }
    			\,
    			b_{ i + 1, k + 1 }
		,
		\label{bernsteinExpansion}
\end{align}

\noindent
and
\begin{equation}
		\label{bernsteinScaling}
	x ( 1 - x )
	\,
	b_{ i, k }
		=
			\tfrac{
					( i + 1 )
					( k + 1 - i )
				}{
					( k + 1 )
					( k + 2 )
				}
			\,
			b_{ i + 1, k + 2 }
    	,
\end{equation}

\noindent
which hold whenever the relevant quantities are defined.

For $ n \ge 1 $, we define a transition kernel from $ [ 0, 1 ] $ to $ [ n ] $ by
$$
	\bernsteinKernel_n( x, i )
		=
			b_{ i, n }( x )
		+	\leftMostColDistSym_n( i )
			b_{ 0, n }( x )
		.
$$

\begin{prop}
	\label{bernsteinKernelAndFiltration}

Let $ n \ge 1 $. As an operator from $ C( [ n ] ) $ to $ C[ 0, 1 ] $, $ \bernsteinKernel_n $ is injective and
\begin{align}	
		\mc H_n
			& = 
    				\Bigg
    				\{
    					\sum_{ j = 0 }^n
    						c_j
    						b_{ j, n }
    				:	c_0, \ldots, c_n \in \mbb R,
    					\,\,\,
						c_0
					=
    					\sum_{ j = 1 }^n
    						\leftMostColDistSym_n( j ) 
    						c_j
    				\Bigg
    				\}
        \label{HnAsDiscreteKernel}
			\\
			& =
    				\text{range } \bernsteinKernel_n
		\label{rangeOfBernsteinKernel}
			.
\end{align}

\end{prop}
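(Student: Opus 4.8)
The plan is to separate the essentially formal content—the range identity \eqref{rangeOfBernsteinKernel} together with injectivity—from the genuinely analytic content in \eqref{HnAsDiscreteKernel}. First I would unwind the definition of $\bernsteinKernel_n$ as an operator. For $g \in C([n])$, set $c_i = g(i)$ for $1 \le i \le n$ and $c_0 = \sum_{i=1}^n \leftMostColDistSym_n(i)\, c_i$; then
\[
(\bernsteinKernel_n g)(x) = \sum_{i=1}^n \big( b_{i,n}(x) + \leftMostColDistSym_n(i)\, b_{0,n}(x) \big)\, g(i) = \sum_{i=0}^n c_i\, b_{i,n}(x).
\]
As $g \mapsto (c_1,\dots,c_n)$ ranges over all of $\mathbb{R}^n$, with $c_0$ then determined, the range of $\bernsteinKernel_n$ is exactly the right-hand set of \eqref{HnAsDiscreteKernel}, which yields \eqref{rangeOfBernsteinKernel}. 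Injectivity is immediate from the same display: since $\{b_{i,n}\}_{i=0}^n$ is a basis of $\mc P_n$, the vanishing of $\bernsteinKernel_n g$ forces every $c_i = 0$, hence $g = 0$.

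Next I would reduce \eqref{HnAsDiscreteKernel} to a single scalar identity. Because $\{b_{i,n}\}_{i=0}^n$ is a basis of $\mc P_n$, each $f \in \mc P_n$ is uniquely $f = \sum_{i=0}^n c_i\, b_{i,n}$, and since $b_{i,n}(0) = \indicator(i = 0)$ we have $f(0) = c_0$. As $\eta$ is linear on $\mc P$ and annihilates constants, $f \in \mc H_n$ if and only if $\sum_{i=0}^n c_i\, \eta(b_{i,n}) = 0$. Thus the whole statement reduces to showing
\[
-\frac{\eta(b_{i,n})}{\eta(b_{0,n})} = \leftMostColDistSym_n(i), \qquad 1 \le i \le n,
\]
since then $\eta(f) = 0$ rearranges precisely into the constraint $c_0 = \sum_{i=1}^n \leftMostColDistSym_n(i)\, c_i$ appearing in \eqref{HnAsDiscreteKernel}.

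Finally I would evaluate the two $\eta$-values explicitly. For $i \ge 1$ we have $b_{i,n}(0) = 0$, so $\eta(b_{i,n}) = \binom{n}{i}\int_0^1 x^{\,i - \alpha - 1}(1-x)^{\,n - i + \alpha - 1}\,dx = \binom{n}{i}\,\Gamma(i-\alpha)\Gamma(n-i+\alpha)/\Gamma(n)$ is a Beta integral, convergent at both endpoints since $0 < \alpha < 1$ and $1 \le i \le n$. For $i = 0$, $b_{0,n}(x) = (1-x)^n$ and $\eta(b_{0,n}) = -\Gamma(1-\alpha)\Gamma(n+\alpha)/(\alpha\,\Gamma(n))$ is exactly the computation already carried out in the proof of Proposition \ref{propertiesOfB}. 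Forming the ratio and using $(1-\alpha)_{i-1} = \Gamma(i-\alpha)/\Gamma(1-\alpha)$ together with $(n-i+\alpha)_i = \Gamma(n+\alpha)/\Gamma(n-i+\alpha)$ collapses the gamma factors into $\binom{n}{i}\,\alpha\,(1-\alpha)_{i-1}/(n-i+\alpha)_i$, which is the formula for $\leftMostColDistSym_n(i)$ from Proposition \ref{stationaryDistLeftMostSym}. This Beta/Gamma bookkeeping is the only substantive step; the point to be careful about is verifying endpoint convergence and correctly matching the gamma ratios to the rising-factorial form of $\leftMostColDistSym_n$, after which both displayed equalities follow.
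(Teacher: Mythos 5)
Your proof is correct, and it reorganizes the paper's argument in a way worth noting. The computational core is the same in both: evaluating $\eta$ on Bernstein polynomials via Beta integrals and matching the Gamma ratios against the rising-factorial formula for $\leftMostColDistSym_n$ from Proposition \ref{stationaryDistLeftMostSym} (the paper phrases this as $\eta(b_{i,n} + \leftMostColDistSym_n(i)\, b_{0,n}) = 0$, you phrase it as $-\eta(b_{i,n})/\eta(b_{0,n}) = \leftMostColDistSym_n(i)$; these are equivalent once one knows $\eta(b_{0,n}) \neq 0$, which your explicit value supplies). The difference is in the surrounding linear algebra. The paper proves the two equalities by dimension counting: the range of $\bernsteinKernel_n$ is $n$-dimensional (whence injectivity), the right-hand side of \eqref{HnAsDiscreteKernel} has dimension at most $n$ and contains that range, and $\mc H_n$ has dimension $n$ by Proposition \ref{propertiesOfB}\ref{eigenbasisOfH} (the Jacobi-polynomial Hamel basis), so a single containment $\text{range}\,\bernsteinKernel_n \subset \mc H_n$ finishes the proof. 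You instead characterize $\mc H_n = \{ f \in \mc P_n : \eta(f) = 0 \}$ directly, expanding $f$ in the Bernstein basis and rewriting the one linear constraint $\sum_{i} c_i\, \eta(b_{i,n}) = 0$ as $c_0 = \sum_{j \geq 1} \leftMostColDistSym_n(j)\, c_j$. This buys you independence from Proposition \ref{propertiesOfB}\ref{eigenbasisOfH}: you never need $\dim \mc H_n = n$, only linearity of $\eta$ and $\eta(b_{0,n}) \neq 0$, so your argument is more self-contained. Conversely, the paper's dimension count lets it verify only a one-sided containment rather than an exact equivalence of constraints, at the cost of leaning on the Jacobi-basis result. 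Both arguments are complete and hinge on the identical Beta/Gamma bookkeeping.
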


\begin{proof}

	Let $ n \ge 1 $.
	From the independence of the Bernstein polynomials and the identity
	$$
		\text{range } 
		\bernsteinKernel_n
			=
				\text{span}
						\big
						\{
							b_{ i, n }( x )
                		+	\leftMostColDistSym_n( i )
                			b_{ 0, n }( x )
						\big
						\}_{ i =  1 }^n
			,
	$$

	\noindent
	it follows that the range of $ \bernsteinKernel_n $ is an $n$-dimensional space.
	As a result, $ \bernsteinKernel_n $ is injective.
	Observing that the right hand side of (\ref{HnAsDiscreteKernel}) has dimension at most $ n $ and contains the range of $ \bernsteinKernel_n $,
it follows that these two spaces are equal.
	Since $ \mc H_n $ also has dimension $ n $ (see Proposition \ref{propertiesOfB}\ref{eigenbasisOfH}), it only remains to show that the range of $ \bernsteinKernel_n $ is contained in $ \mc H_n $.
	The containment in $ \mc P_n $ is clear. 
	For the containment in $ \mc H $, we simply compute, for $ i \in [ n ] $,
	\begin{align*}
		& \hspace{-6mm}
		\eta 
    		(
				b_{ i, n }( x )
    		+	\leftMostColDistSym_n( i )
    			b_{ 0, n }( x )
    		)
		\\
			& \hspace{ 8 mm } = 
        				\binom{ n }{ i }
        	           	\int_0^1
        					x^{ i - \alpha - 1 }
        					( 1 - x )^{ n - i + \alpha - 1 }
                       		\,
                       		dx
        			- 	n 
            			\alpha^{ -1 }
        				\leftMostColDistSym_n( i )
                        \int_0^1 
                        	x^{ - \alpha } 
                        	( 1 - x )^{ n - 1 + \alpha }
            				\,
                        	dx
			\\
			& \hspace{ 8 mm } = 
        				\binom{ n }{ i }
        				\frac	{
        							\Gamma( i - \alpha )
        							\Gamma( n - i + \alpha )
            					}{
        							\Gamma( n )
            					}
    				-	n 
        				\alpha^{-1}
    					\leftMostColDistSym_n( i )
        				\frac	{
        							\Gamma( 1 - \alpha )
        							\Gamma( n + \alpha )
            					}{
        							\Gamma( n + 1 )
            					}
			\\
			& \hspace{ 8 mm } = 
						0
					.
					\qedhere
	\end{align*}
\end{proof}

\begin{prop}
    \label{generatorOnBernstein}

    The action of $ \mc B $ on the Bernstein polynomials is given by
    $$
    	\mc B  
    	b_{ i, n }
    		=
           		n ( n + 1 )
				\sum_{ k = 0 }^n
					(
						\leftMostColLocalProb_{ k, i }
					-	\indicator( k = i )
					)
					\,
           			b_{ k, n }
    		,
    		\qquad
			0 \le i \le n
			.		
    $$
    
\end{prop}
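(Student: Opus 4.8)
The plan is to compute $\mc B b_{i,n}$ directly and expand the result in the degree-$n$ Bernstein basis $\{b_{k,n}\}_{k=0}^n$, then match the resulting coefficients against the explicit local probabilities in \eqref{yLocalProbDef}. Since $\mc B b_{i,n}$ is a polynomial of degree $n$, such an expansion exists and is unique, so it suffices to identify the coefficient of each $b_{k,n}$; we will find that only $k \in \{i-1,i,i+1\}$ contribute, matching the band structure of the local probabilities.

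First I would differentiate using \eqref{bernsteinDerivative}: applying it once gives $b_{i,n}' = n(b_{i-1,n-1}-b_{i,n-1})$ and applying it again gives $b_{i,n}'' = n(n-1)(b_{i-2,n-2}-2b_{i-1,n-2}+b_{i,n-2})$. To return $x(1-x)b_{i,n}''$ to degree $n$, I would apply the scaling relation \eqref{bernsteinScaling} to each of the three degree-$(n-2)$ terms; the factors of $n(n-1)$ cancel the denominators, leaving
\[
	x(1-x)\,b_{i,n}'' = (i-1)(n+1-i)\,b_{i-1,n} - 2i(n-i)\,b_{i,n} + (i+1)(n-1-i)\,b_{i+1,n}.
\]
For the first-order term I would use the expansion \eqref{bernsteinExpansion} to rewrite the degree-$(n-1)$ polynomials $b_{i-1,n-1}$ and $b_{i,n-1}$ in the degree-$n$ basis, obtaining $\alpha b_{i,n}' = \alpha\bigl[(n-i+1)b_{i-1,n} + (2i-n)b_{i,n} - (i+1)b_{i+1,n}\bigr]$. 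Subtracting yields $\mc B b_{i,n}$ with coefficient of $b_{i-1,n}$ equal to $(n+1-i)(i-1-\alpha)$, coefficient of $b_{i,n}$ equal to $-2i(n-i)-\alpha(2i-n)$, and coefficient of $b_{i+1,n}$ equal to $(i+1)(n-1-i+\alpha)$.

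It then remains to verify that these coefficients equal $n(n+1)\bigl(\leftMostColLocalProb_{k,i}-\indicator(k=i)\bigr)$ for $k=i-1,i,i+1$. The key bookkeeping point is the index swap: the coefficient of $b_{i-1,n}$ should match $n(n+1)\leftMostColLocalProb_{i-1,i}$, which by \eqref{yLocalProbDef} is the up-rate $\leftMostColLocalProb_{\cdot,\cdot+1}$ evaluated at row $i-1$, namely $(i-1-\alpha)(n-i+1)$; the coefficient of $b_{i+1,n}$ should match $n(n+1)\leftMostColLocalProb_{i+1,i}$, the down-rate $\leftMostColLocalProb_{\cdot,\cdot-1}$ at row $i+1$, namely $(i+1)(n-i-1+\alpha)$; and the diagonal coefficient should match $n(n+1)(\leftMostColLocalProb^{(1)}_{i,i}+\leftMostColLocalProb^{(2)}_{i,i}-1)$. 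Each of these is a one-line algebraic identity that I would verify by expansion.

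The main obstacle I anticipate is the boundary bookkeeping at $i=0$ and $i=n$, where $b_{i-1,n}$ or $b_{i+1,n}$ and some of the local probabilities $\leftMostColLocalProb_{k,i}$ fall outside the interior range. The proof must confirm that the single formula remains correct there, using the conventions that $b_{j,n}\equiv 0$ for $j<0$ or $j>n$ and that the local probabilities are defined by the same formulas at the endpoint $i=0$ and set to zero otherwise. Since the relations \eqref{bernsteinDerivative}--\eqref{bernsteinScaling} already encode the vanishing of out-of-range Bernstein polynomials, one simply substitutes $i=0$ (respectively $i=n$) into the interior computation and checks directly that the surviving coefficients agree with the correspondingly extended local probabilities; for instance at $i=0$ one obtains $\mc B b_{0,n} = \alpha n\, b_{0,n} + (n-1+\alpha)\,b_{1,n}$, which matches $n(n+1)(\leftMostColLocalProb_{0,0}-1)$ and $n(n+1)\leftMostColLocalProb_{1,0}$ after substituting $i=0$ and $i=1$ into \eqref{yLocalProbDef}.
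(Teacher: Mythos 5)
Your proposal is correct and takes essentially the same route as the paper's own proof: apply \eqref{bernsteinDerivative} twice, return to degree $n$ via \eqref{bernsteinScaling} and \eqref{bernsteinExpansion}, and match the three surviving coefficients $(i-1-\alpha)(n+1-i)$, $-2i(n-i)-\alpha(2i-n)$, $(i+1)(n-1-i+\alpha)$ against $n(n+1)\big(\leftMostColLocalProb_{k,i}-\indicator(k=i)\big)$ for $k=i-1,i,i+1$, with out-of-range indices handled by the vanishing conventions. The only point the paper treats that you omit is the degenerate cases $n\le 1$, where the expansion of $b_{i,n}''$ into degree-$(n-2)$ Bernstein polynomials is not literally available (for $n=1$ one checks directly that both sides of the scaled second-derivative identity vanish, and $n=0$ is trivial); this is a one-line verification rather than a substantive gap.
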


\begin{proof}

    Let $ n \ge 2 $ and 
    $
    	0
			\le
				i
			\le
				n
			.
    $ 
    Applying (\ref{bernsteinDerivative}) twice, we see that 
    \begin{align*}
    	b_{ i, n }''
    		& =
        			n
        			(
        				b_{ i - 1, n - 1 }'
        			-	b_{ i, n - 1 }'
        			)
			\\
    		& =
        			n
        			( n - 1 )
					(
        				b_{ i - 2, n - 2 }
        			-	2 b_{ i - 1, n - 2 }
        			+	b_{ i, n - 2 }
        			)
			.
    \end{align*}

    \noindent
    Applying now (\ref{bernsteinScaling}), we have that
    \begin{align}
    \begin{split}
			\label{scaledBernsteinSecondDerivativeExpansion}
    	&
		x ( 1 - x )
		b_{ i, n }''( x )
    	\\
			& \hspace{ 6.5 mm } =
        			n
        			( n - 1 )
					\Big(
            			\tfrac{
            					( i - 1 )
            					( n + 1 - i )
            				}{
            					( n - 1 )
            					n
            				}
            			\,
            			b_{ i - 1, n }( x )
					-	\tfrac{
            					2 i
            					( n - i)
            				}{
            					( n - 1 )
            					n
            				}
            			\,
            			b_{ i, n }( x )
					+	\tfrac{
            					( i + 1 )
            					( n - 1 - i )
            				}{
            					( n - 1 )
            					n
            				}
            			\,
            			b_{ i + 1, n }( x )
        			\Big)
			\\
			& \hspace{ 6.5 mm } =
            			( i - 1 )
            			( n + 1 - i )
            			\,
            			b_{ i - 1, n }( x )
					-	2 i
            			( n - i )
            			\,
            			b_{ i, n }( x )
					+	( i + 1 )
            			( n - 1 - i )
            			\,
            			b_{ i + 1, n }( x )
	\end{split}
    \end{align}

    \noindent
	Using (\ref{bernsteinDerivative}) and (\ref{bernsteinExpansion}), we find that
    \begin{align}
    \begin{split}
			\label{bernsteinFirstDerivativeExpansion}
		b_{ i, n }'
    		& =
					n
        			(
        				b_{ i - 1, n - 1 }
        			-	b_{ i, n - 1 }
        			)
			\\
    		& =
					n
        			\Big(
            			\tfrac{ n + 1 - i }{ n }
            			\,
            			b_{ i - 1, n }
            		+	\tfrac{ i }{ n }
            			\,
            			b_{ i, n }
					-	\tfrac{ n - i }{ n }
            			\,
            			b_{ i, n }
			 		-	\tfrac{ i + 1 }{ n }
            			\,
            			b_{ i + 1, n }
        			\Big)
			\\
    		& =
            			( n + 1 - i )
            			\,
            			b_{ i - 1, n }
            		+	( 2 i - n )
            			\,
            			b_{ i, n }
			 		-	( i + 1 )
            			\,
            			b_{ i + 1, n }
			.
	\end{split}
    \end{align}

    \noindent
    As a result,
    \begin{align*}
		\mc B 
		b_{ i, n }
			& = 
            			( i - 1 - \alpha )
            			( n + 1 - i )
            			\,
            			b_{ i - 1, n }
					-	(
							\alpha
        					( 2 i - n )
    					+	2 i
                			( n - i )
						)
            			\,
            			b_{ i, n }
			\\
			& \quad		
					+	( i + 1 )
            			( n - 1 - i + \alpha )
            			\,
            			b_{ i + 1, n }
			\\
			& = 
            		n ( n + 1 )
					\left(
						\leftMostColLocalProb_{ i - 1, i }
						\,
            			b_{ i - 1, n }
    				+	(
							\leftMostColLocalProb_{ i, i }
						-
							1
						)
						\,
            			b_{ i, n }	
					+	\leftMostColLocalProb_{ i + 1, i }
						\,
            			b_{ i + 1, n }
					\right)
			\\
			& = 
            		n ( n + 1 )
					\sum_{ k = i - 1 }^{ i + 1 }
						(
							\leftMostColLocalProb_{ k, i }
						-	\indicator( k = i )
						)
						\,
            			b_{ k, n }
			.
    \end{align*}

    \noindent
    Recalling that 
	$
    		\leftMostColLocalProb_{ k, i }
    	-	\indicator( k = i )
	$
	is zero unless $ i - 1 \le k \le i + 1 $ and $ b_{ k, n } \equiv 0 $ unless $ 0 \le k \le n $, we can change the lower and upper limits of the sum to $ 0 $ and $ n $, respectively.
    This establishes the $ n \ge 2 $ case. 
    When $ n = 1 $, we observe that (\ref{bernsteinFirstDerivativeExpansion}) still holds and the first and last quantities of
    (\ref{scaledBernsteinSecondDerivativeExpansion}) 
    are still equal.
	When $ n = 0 $, the claim is trivial.
\end{proof}

\begin{prop}
\label{leftMostChainAndLimitIntertwining}

    For $ n \ge 1 $, the following relation holds on $ C([ n ])$:
    $$
    	\mc B
    	\bernsteinKernel_n
    		=
    			\bernsteinKernel_n 
    			\,
    			n ( n + 1 )
    			( 
    				Q_n^{ ( \alpha, 0 ) } 
    			- 	\mb 1 
    			) 
    		.
    $$
\end{prop}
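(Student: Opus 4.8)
The plan is to verify the operator identity on the basis $\{\delta_j\}_{j=1}^n$ of $C([n])$, since both sides are linear operators from $C([n])$ into $\mc P_n \subseteq C[0,1]$. Applying $\bernsteinKernel_n$ to $\delta_j$ produces the single column $\bernsteinKernel_n(\cdot, j) = b_{j,n} + \leftMostColDistSym_n(j)\, b_{0,n}$, so the claim reduces to establishing, for each $j \in [n]$, the identity
$$
	\mc B\bigl(b_{j,n} + \leftMostColDistSym_n(j)\,b_{0,n}\bigr)
		=
			n(n+1)\sum_{k=1}^n \bigl(\leftMostColKernelBiased_n(k,j) - \indicator(k=j)\bigr)\,\bernsteinKernel_n(\cdot,k).
$$

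First I would expand the left-hand side in the Bernstein basis using Proposition \ref{generatorOnBernstein}. For $j \ge 1$ this gives $\tfrac{1}{n(n+1)}\mc B b_{j,n} = \leftMostColLocalProb_{0,j}\,b_{0,n} + \sum_{k=1}^n(\leftMostColLocalProb_{k,j} - \indicator(k=j))\,b_{k,n}$, while for the correction term $\tfrac{1}{n(n+1)}\mc B b_{0,n} = (\leftMostColLocalProb_{0,0}-1)\,b_{0,n} + \leftMostColLocalProb_{1,0}\,b_{1,n}$, using that $\leftMostColLocalProb_{k,0}$ vanishes for $k \ge 2$. Substituting $b_{k,n} = \bernsteinKernel_n(\cdot,k) - \leftMostColDistSym_n(k)\,b_{0,n}$ for $k \ge 1$ rewrites everything in the basis $\{b_{0,n}\} \cup \{\bernsteinKernel_n(\cdot,k)\}_{k=1}^n$ of $\mc P_n$. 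The coefficient of $\bernsteinKernel_n(\cdot,k)$ then comes out to $(\leftMostColLocalProb_{k,j} - \indicator(k=j)) + \leftMostColLocalProb_{1,0}\leftMostColDistSym_n(j)\indicator(k=1)$, which is exactly $\leftMostColKernelBiased_n(k,j) - \indicator(k=j)$ by the explicit transition kernel from Proposition \ref{propIntertwining}.

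The only remaining quantity is the coefficient of $b_{0,n}$, and here I would invoke structure rather than grind: I expect it to vanish identically. The clean way to see this is that $\mc B$ maps $\mc P_n$ into $\mc H_n$. Indeed, $\mc B$ preserves $\mc P_n$ by Proposition \ref{generatorOnBernstein}, and the computation $\eta(\mc B f) = 0$ from the proof of Proposition \ref{propertiesOfB} shows $\mathrm{range}\,\mc B \subseteq \mc H$; hence $\mc B\bernsteinKernel_n(\cdot,j) \in \mc H_n$. By Proposition \ref{bernsteinKernelAndFiltration}, $\mc H_n = \mathrm{range}\,\bernsteinKernel_n = \mathrm{span}\{\bernsteinKernel_n(\cdot,k)\}_{k=1}^n$; since $\{b_{0,n}\}\cup\{\bernsteinKernel_n(\cdot,k)\}_{k=1}^n$ is a basis of $\mc P_n$ (a unipotent change of basis from $\{b_{k,n}\}_{k=0}^n$), every element of $\mc H_n$ has zero $b_{0,n}$-component. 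Thus the $b_{0,n}$-coefficient of $\mc B\bernsteinKernel_n(\cdot,j)$ drops for free, and the displayed identity follows.

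The main obstacle is precisely this vanishing of the $b_{0,n}$-coefficient. One could instead verify it directly, which amounts to the scalar identity $\leftMostColLocalProb_{0,j} + \leftMostColDistSym_n(j)\leftMostColLocalProb_{0,0} = \sum_{i=1}^n \leftMostColDistSym_n(i)\leftMostColLocalProb_{i,j} + \leftMostColLocalProb_{1,0}\leftMostColDistSym_n(1)\leftMostColDistSym_n(j)$ for each $j$; this is in principle accessible from the explicit formulas for $\leftMostColLocalProb_{i,j}$ and $\leftMostColDistSym_n$, but it is unpleasant, so the abstract argument via $\eta \circ \mc B = 0$ is strongly preferable. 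Everything else is routine bookkeeping in the Bernstein basis.
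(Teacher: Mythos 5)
Your proof is correct and takes essentially the same approach as the paper's: expand $\mc B \bernsteinKernel_n$ in the Bernstein basis via Proposition \ref{generatorOnBernstein}, match the $k \ge 1$ coefficients against the explicit kernel from Proposition \ref{propIntertwining}, and dispose of the problematic $b_{0,n}$ coefficient by observing that the relevant functions lie in $\mc H_n$ and invoking the structure of $\mc H_n$ from Proposition \ref{bernsteinKernelAndFiltration}. The paper phrases that last step as comparing the expansions of both sides and applying (\ref{HnAsDiscreteKernel}), whereas you re-expand one side in the basis $\{ b_{0,n} \} \cup \{ \bernsteinKernel_n( \cdot, k ) \}_{ k = 1 }^n$; these are the same argument in different clothing.
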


\begin{proof}

	Let $ n \ge 1 $ and $ i \in [ n ] $.
	Define $ e_i \colon [ n ] \to \mbb R $ by 
	$
		e_i
			= 
				\indicator( i = \cdot )
			.
	$
	From Proposition \ref{generatorOnBernstein}, we have that
	\begin{align*}
		&
		n^{ -1 } 
		( n + 1 )^{ -1 }
		\mc B
		\bernsteinKernel_n
		e_i
		\\
			& \qquad = 
        		n^{ -1 } 
        		( n + 1 )^{ -1 }
				\mc B
				(
        			b_{ i, n }
        		+	b_{ 0, n }
        			\leftMostColDistSym_n( i )
				)
			\\
			& \qquad = 
				\sum_{ k = 0 }^n
					(
						\leftMostColLocalProb_{ k, i }
					-	\indicator( k = i )
					+	\leftMostColDistSym_n( i )
    					(
    						\leftMostColLocalProb_{ k, 0 }
    					-	\indicator( k = 0 )
    					)
					)
					\,
           			b_{ k, n }
			\\
			& \qquad = 
					(
						\leftMostColLocalProb_{ 0, i }
					+	\leftMostColDistSym_n( i )
    					(
    						\leftMostColLocalProb_{ 0, 0 }
    					-	1
    					)
					)
					\,
           			b_{ 0, n }
			+	\sum_{ k = 1 }^n
					(
						\leftMostColLocalProb_{ k, i }
					-	\indicator( k = i )
					+	\leftMostColDistSym_n( i )
						\leftMostColLocalProb_{ 1, 0 }
						\indicator( k = 1 )
					)
					\,
           			b_{ k, n }
			.
	\end{align*}

	\noindent
	On the other hand, Proposition \ref{propIntertwining} gives us that
    \begin{align*}
		\bernsteinKernel_n
    	( 
    		Q_n^{ ( \alpha, 0 ) } 
    	- 	\mb 1 
    	) 
    	e_i
			& = 
    			\sum_{ k = 1 }^n
    				(
            			b_{ k, n }
            		+	b_{ 0, n }
            			\leftMostColDistSym_n( k )
    				)
                	(
    				( 
                		Q_n^{ ( \alpha, 0 ) } 
                	- 	\mb 1 
                	) 
                	e_i
                	)( k )	
    		\\
			& = 
    			\sum_{ k = 1 }^n
    				(
            			b_{ k, n }
            		+	b_{ 0, n }
            			\leftMostColDistSym_n( k )
    				)
    				( 
                		Q_n^{ ( \alpha, 0 ) } 
                	- 	\mb 1 
                	)( k, i ) 
    		\\
			& = 
    			\sum_{ k = 1 }^n
    				(
            			b_{ k, n }
            		+	b_{ 0, n }
            			\leftMostColDistSym_n( k )
    				)
    				( 
                		\leftMostColLocalProb_{ k, i }
                	- 	\indicator( i = k )
    				+ 	\leftMostColDistSym_n( i )
						\leftMostColLocalProb_{ 1, 0 }
    					\indicator( k = 1 ) 
    				)
    		.
    \end{align*}

	\noindent
	To show that the two expressions are equal, it will suffice to show that the coefficients of $ b_{ k, n } $ are the same in each.
	For $ k \ge 1 $, this is immediate. 
	For $ k = 0 $, we observe that each of the above functions lies in $ \mc H_n $ (see Proposition \ref{propertiesOfB} and (\ref{rangeOfBernsteinKernel})) and apply (\ref{HnAsDiscreteKernel}).
\end{proof}

\section{The Convergence Argument}
\label{section convergence argument}

In this section, we verify the convergence condition appearing in Theorem \ref{markovConvergenceFromIntertwining}.
We rely on a description of the inverse of the transition operator $ \bernsteinKernel_n $ in terms of a variant of the Bernstein polynomials.

These variants fall into the class of degenerate Bernstein polynomials \cite{kimKim2018} and are given by
$$
	b_{ i, k, n }^* ( x )
		=
			\binom{ k }{ i }
			\frac{
    			( n x )^{ \downarrow i }
    			( n - n x )^{ \downarrow ( k - i ) }
			}{
    			n^{ \downarrow k }
			}
		,
			\quad
    			0 \le i \le k \le n
		.
$$

\begin{prop}
        \label{bernsteinExtendedPieriRule}
        
        For $ k \ge i \ge 0 $, we have the expansions
        $$
        	b_{ i, k }
        		=
        			\sum_{ j = 0 }^n
        				b_{ i, k, n }^*
        					\big( 
        						\tfrac{ j }{ n } 
        					\big)
        				b_{ j, n }
        		,
        		\quad
        			n \ge k
        		.
        $$

\end{prop}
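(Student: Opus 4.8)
The plan is to reduce everything to the classical degree-elevation (degree-raising) identity for Bernstein polynomials, and then to recognize the resulting elevation coefficient as the value of $b_{i,k,n}^*$ on the grid $\{j/n\}_{j=0}^n$. First I would fix $n \ge k \ge i \ge 0$, write $1 = (x + (1-x))^{n-k}$, multiply this into the definition of $b_{i,k}$, and expand by the binomial theorem:
$$
	b_{i,k}(x) = \binom{k}{i}\, x^i(1-x)^{k-i}\sum_{m=0}^{n-k}\binom{n-k}{m}x^m(1-x)^{n-k-m}.
$$
Reindexing by $j = i+m$ and using $x^j(1-x)^{n-j} = b_{j,n}(x)/\binom{n}{j}$ then produces the expansion
$$
	b_{i,k} = \sum_{j=0}^n \frac{\binom{k}{i}\binom{n-k}{j-i}}{\binom{n}{j}}\, b_{j,n},
$$
where the sum may be extended over all $0 \le j \le n$ because $\binom{n-k}{j-i}$ vanishes outside $i \le j \le n-k+i$.

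Second, I would compute the proposed coefficient $b_{i,k,n}^*(j/n)$ directly from the definition. Substituting $x = j/n$ collapses the two falling factorials to $(nx)^{\downarrow i} = j^{\downarrow i}$ and $(n-nx)^{\downarrow(k-i)} = (n-j)^{\downarrow(k-i)}$, so that
$$
	b_{i,k,n}^*\!\left(\tfrac{j}{n}\right) = \binom{k}{i}\,\frac{j^{\downarrow i}\,(n-j)^{\downarrow(k-i)}}{n^{\downarrow k}}.
$$
The remaining task is then to match the two coefficients for each $j$, which is pure factorial bookkeeping: writing $j^{\downarrow i} = j!/(j-i)!$, $(n-j)^{\downarrow(k-i)} = (n-j)!/(n-j-k+i)!$, and $n^{\downarrow k} = n!/(n-k)!$, one checks at once that
$$
	\binom{k}{i}\,\frac{j^{\downarrow i}(n-j)^{\downarrow(k-i)}}{n^{\downarrow k}} = \binom{k}{i}\,\frac{j!\,(n-j)!\,(n-k)!}{(j-i)!\,(n-j-k+i)!\,n!} = \frac{\binom{k}{i}\binom{n-k}{j-i}}{\binom{n}{j}},
$$
using the identity $n-k-(j-i) = (n-j)-(k-i)$. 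Comparing coefficients of $b_{j,n}$ in the two expansions then closes the argument.

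The step I expect to require the most care — though it is minor — is the bookkeeping of the vanishing and support conventions for the falling factorials at grid points outside the natural range. I would point out that $j^{\downarrow i}$ forces $j \ge i$ and $(n-j)^{\downarrow(k-i)}$ forces $j \le n-(k-i)$, which is exactly the range on which $\binom{n-k}{j-i}$ is nonzero, so the two sides share the same support in $j$ and the off-range terms vanish on both sides. The trivial boundary cases $i=0$ or $k=i$ are covered by the convention $0^{\downarrow 0} = 1$ recorded in the preliminaries, so no separate treatment is needed.
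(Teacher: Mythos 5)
Your proof is correct and takes essentially the same route as the paper: both reduce the claim to the degree-elevation identity with coefficients $\binom{k}{i}\binom{n-k}{j-i}/\binom{n}{j}$ and then verify by the same factorial bookkeeping that $b_{i,k,n}^*\big(\tfrac{j}{n}\big)$ equals this coefficient, with both sides vanishing outside $i \le j \le n-k+i$. The only difference is that the paper cites the elevation formula from the literature (Equation (2) of the Rababah reference), whereas you derive it directly by expanding $(x+(1-x))^{n-k}$, which makes your argument self-contained.
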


\begin{proof}
	The expansions of a Bernstein polynomial in the Bernstein bases are given in Equation (2) in \cite{rababah2004}.
	Let us verify that the coefficients in those expansions match the coefficients in the above expansions.
	Fix $ n \ge k \ge i \ge 0 $. 
	The coefficient of $ b_{ j, n } $ in the above expansion is given by
$$
	b_{ i, k, n }^* \left( \frac{ j }{ n }  \right)
		=
			\binom{ k }{ i }
			\frac{
    			j^{ \downarrow i }
    			( n - j )^{ \downarrow ( k - i ) }
			}{
    			n^{ \downarrow k }
			}
		.
$$
	
	\noindent
	When $ j < i $ or $ j > n - k + i $, it is clear that this coefficient is zero.
	If instead $ i \le j \le n - k + i $, this coefficient is reduces to
	\begin{align*}
		\binom{ k }{ i }
		\frac{
   			j^{ \downarrow i }
   			( n - j )^{ \downarrow ( k - i ) }
		}{
   			n^{ \downarrow k }
		}
			& = 
            		\binom{ k }{ i }
            		\frac{
               			\frac{ j! }{ ( j - i )! }
               			\frac{ ( n - j )! }{ ( n - j - k + i )! }
            		}{
               			\frac{ n! }{ ( n - k )! }
            		}
			\\
			& = 
            		\binom{ k }{ i }
            		\frac{
               			\frac{ ( n - k )! }{ ( j - i )! ( n - j - k + i )! }
            		}{
               			\frac{ n! }{ j! ( n - j )! }
            		}
			\\
			& = 
            		\binom{ k }{ i }
            		\frac{
               			\binom{ n - k }{ j - i }
            		}{
               			\binom{ n }{ j }
            		}
			.
	\end{align*}
	
	\noindent
	In either case, this coefficient agrees with the coefficient in \cite{rababah2004}.
\end{proof}

Let $ \iota_n \colon [ n ] \to [ 0 , 1 ] $ be defined by $ j \mapsto \frac{ j }{ n } $ and $ \rho_n \colon C[ 0, 1 ] \to C[ n ] $ be the associated projection, 
$ 
	f 
		\mapsto 
			f \circ \iota_n 
	.
$

\begin{prop}
	\label{preimagesOfBernsteinBasisOfH}

	For $ n \ge k \ge i \ge 1 $, we have the identity
    $$
    	\bernsteinKernel_n 
    	\rho_n
    	(
    		b_{ i, k, n }^*
    	+	\leftMostColDistSym_k( i )
    		b_{ 0, k, n }^*
    	)
    		=
        			b_{ i, k }
            	+	\leftMostColDistSym_k( i )
            		b_{ 0, k }
    		.
    $$

\end{prop}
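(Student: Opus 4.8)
The plan is to expand both sides in the Bernstein basis $ \{ b_{ j, n } \}_{ j = 0 }^n $ of $ \mc P_n $ and match coefficients. Writing $ h = b_{ i, k, n }^* + \leftMostColDistSym_k( i ) b_{ 0, k, n }^* $ for brevity, the right-hand side expands directly from Proposition \ref{bernsteinExtendedPieriRule}: applying that expansion to both $ b_{ i, k } $ and $ b_{ 0, k } $ (legitimate since $ n \ge k $) gives $ b_{ i, k } + \leftMostColDistSym_k( i ) b_{ 0, k } = \sum_{ j = 0 }^n h( j/n ) \, b_{ j, n } $. For the left-hand side, unwinding the definitions of $ \rho_n $ and $ \bernsteinKernel_n $ yields $ \bernsteinKernel_n \rho_n h = \sum_{ j = 1 }^n h( j/n ) \, b_{ j, n } + \big( \sum_{ j = 1 }^n \leftMostColDistSym_n( j ) \, h( j/n ) \big) \, b_{ 0, n } $. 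Comparing the two, the coefficients of $ b_{ j, n } $ for $ 1 \le j \le n $ agree immediately, so the whole proposition collapses to a single scalar identity for the coefficient of $ b_{ 0, n } $.

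Since $ b_{ i, k, n }^*( 0 ) = 0 $ for $ i \ge 1 $ while $ b_{ 0, k, n }^*( 0 ) = 1 $, we have $ h( 0 ) = \leftMostColDistSym_k( i ) $, so the identity to be proved is
\[
	\sum_{ j = 1 }^n \leftMostColDistSym_n( j ) \big( b_{ i, k, n }^*( j/n ) + \leftMostColDistSym_k( i ) \, b_{ 0, k, n }^*( j/n ) \big) = \leftMostColDistSym_k( i ) .
\]
I would prove this probabilistically, reading the degenerate Bernstein values as sampling probabilities. The computation in the proof of Proposition \ref{bernsteinExtendedPieriRule}, together with the symmetry of the hypergeometric coefficients, identifies $ b_{ i, k, n }^*( j/n ) = \binom{ j }{ i } \binom{ n - j }{ k - i } / \binom{ n }{ k } $ as the probability that a uniformly chosen $ k $-subset of $ n $ labelled customers contains exactly $ i $ of a distinguished block of $ j $; in particular $ b_{ 0, k, n }^*( j/n ) $ is the probability that the distinguished block is missed entirely.

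To exploit this, let $ \sigma $ have law $ \upDownDistSym_n $ and let $ \tilde\sigma $ be obtained from $ \sigma $ by $ n - k $ successive down-steps. A down-step deletes a uniformly chosen customer, so $ \tilde\sigma $ is the restriction of $ \sigma $ to a uniformly random $ k $-subset of its $ n $ customers; iterating the consistency relation (\ref{eqnConsistency}) gives $ \tilde\sigma \sim \upDownDistSym_k $, hence $ \tilde\sigma_1 \sim \leftMostColDistSym_k $ by Proposition \ref{stationaryDistLeftMostSym}. Now condition on $ \sigma_1 = j $ and decompose according to how many of the first block's $ j $ customers survive. If exactly $ i \ge 1 $ survive --- an event of probability $ b_{ i, k, n }^*( j/n ) $ --- the first block persists as the new first block and $ \tilde\sigma_1 = i $. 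If none survive --- probability $ b_{ 0, k, n }^*( j/n ) $ --- then by (\ref{eqnCondDistOfRightStructure}) the tail $ ( \sigma_2, \sigma_3, \ldots ) $ has law $ \upDownDistSym_{ n - j } $, and conditioning on the first block being missed makes the kept $ k $-subset uniform among the remaining $ n - j $ customers, so $ \tilde\sigma $ is the restriction of $ \upDownDistSym_{ n - j } $ to a uniform $ k $-subset; by (\ref{eqnConsistency}) again this has law $ \upDownDistSym_k $, so $ \tilde\sigma_1 \sim \leftMostColDistSym_k $ independently of $ j $. Summing $ \prob( \tilde\sigma_1 = i ) $ over both cases and over $ j $ produces exactly the displayed identity.

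The main obstacle is isolating and establishing this $ b_{ 0, n } $-coefficient identity; everything else is bookkeeping with the two Bernstein expansions. The crux is the claim that, conditionally on the original first block being wiped out, the new leftmost column is again $ \leftMostColDistSym_k $-distributed and does not depend on $ j $ --- this is precisely what forces the factor $ \leftMostColDistSym_k( i ) $ onto the right-hand side, and it rests squarely on the self-similar structure encoded by the conditional law (\ref{eqnCondDistOfRightStructure}) together with the down-sampling consistency (\ref{eqnConsistency}).
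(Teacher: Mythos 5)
Your proof is correct, and it diverges from the paper's at exactly the point where the real content lies. Both arguments begin identically: expand both sides in the basis $\{ b_{j,n} \}_{j=0}^n$ via Proposition \ref{bernsteinExtendedPieriRule} and the definition of $\bernsteinKernel_n \rho_n$, and observe that the coefficients of $b_{j,n}$ agree for $1 \le j \le n$. The difference is how the $b_{0,n}$ coefficient is handled. The paper never computes it: it notes that both functions lie in $\mc H_n$ (the left side because $\text{range}\,\bernsteinKernel_n = \mc H_n$ by Proposition \ref{bernsteinKernelAndFiltration}, the right side because it is $\bernsteinKernel_k e_i \in \mc H_k \subset \mc H_n$), and then invokes the characterization (\ref{HnAsDiscreteKernel}), which says that for elements of $\mc H_n$ the coefficient $c_0$ is the fixed linear function $\sum_{j\ge 1} \leftMostColDistSym_n(j)\, c_j$ of the remaining coefficients; agreement in $c_1, \ldots, c_n$ therefore forces agreement in $c_0$. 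You instead isolate the scalar identity
$$
	\sum_{ j = 1 }^n
		\leftMostColDistSym_n( j )
		\big(
			b_{ i, k, n }^*( j/n )
		+ 	\leftMostColDistSym_k( i ) \, b_{ 0, k, n }^*( j/n )
		\big)
			=
				\leftMostColDistSym_k( i )
$$
and prove it directly by a down-sampling coupling: realize $n-k$ iterated down-steps as restriction of a $\upDownDistSym_n$-composition to a uniform $k$-subset of customers, so that (\ref{eqnConsistency}) gives $\tilde\sigma_1 \sim \leftMostColDistSym_k$, and then decompose on the number of survivors from the first block, using the hypergeometric reading of $b^*_{i,k,n}(j/n)$ and the conditional law (\ref{eqnCondDistOfRightStructure}) for the wiped-out case. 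All the conditional-law claims you make check out (in particular, terms with $n - j < k$ vanish automatically since $b^*_{0,k,n}(j/n) = 0$ there). The trade-off: the paper's route is shorter and purely linear-algebraic, piggybacking on work already done in Proposition \ref{bernsteinKernelAndFiltration}, but it leaves the $b_{0,n}$-coefficient identity implicit; your route costs a coupling construction but yields that nontrivial combinatorial identity explicitly and explains probabilistically why the factor $\leftMostColDistSym_k(i)$ must appear — it is the leftmost-column law regenerating after the original first block is deleted, which is a nice structural insight consistent with the self-similarity the paper exploits elsewhere.
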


\begin{proof}

	It follows from definition that
	\begin{align*}
		\bernsteinKernel_n 
    	\rho_n
    	(
    		b_{ i, k, n }^*
    	+	\leftMostColDistSym_k( i )
    		b_{ 0, k, n }^*
    	)
			& = 
    			\sum_{ j = 1 }^n
    				(
    					b_{ j, n }
    				+ 	\leftMostColDistSym_n( j )
    					b_{ 0, n }
    				)
                	\big(
                		b_{ i, k, n }^*
        					\big( 
        						\tfrac{ j }{ n } 
        					\big)
                	+	\leftMostColDistSym_k( i )
                		b_{ 0, k, n }^*
        					\big( 
        						\tfrac{ j }{ n } 
        					\big)
                	\big)	
    		.
	\end{align*}

	\noindent
	Meanwhile, Proposition \ref{bernsteinExtendedPieriRule} gives us the expansion
    $$
        	b_{ i, k }
    	+	\leftMostColDistSym_k( i )
    		b_{ 0, k }
        		=
        			\sum_{ j = 0 }^n
                    	\big(
                    		b_{ i, k, n }^*
            					\big( 
            						\tfrac{ j }{ n } 
            					\big)
                    	+	\leftMostColDistSym_k( i )
                    		b_{ 0, k, n }^*
            					\big( 
            						\tfrac{ j }{ n } 
            					\big)
                    	\big)	
        				b_{ j, n }
        		.
    $$
    
    \noindent
	Upon comparison, we find that the coefficient of $ b_{ j, n } $ is the same in both expressions whenever $ j \ge 1 $. 
    Since both functions lie in $ \mc H_n $, the coefficients of $ b_{ 0, n } $ must agree as well (see (\ref{HnAsDiscreteKernel})). 
    As a result, the two functions are equal.
\end{proof}

\begin{prop}
	\label{degenerateBernsteinConvergence}

	For $ k \ge i \ge 0 $, we have the convergence
	$$
		b_{ i, k, n }^*
			\xrightarrow[ n \to \infty ]{}
				b_{ i, k }
			.
	$$
	
\end{prop}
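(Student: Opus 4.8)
The plan is to reduce everything to the elementary limit of a falling factorial by factoring out the appropriate power of $n$ from each of the three falling factorials appearing in the definition of $b_{i,k,n}^*$, and then passing to the limit factor by factor. Writing the falling factorial as $x^{\downarrow b} = x(x-1)\cdots(x-b+1)$, the first step is to record the identities
$$
(nx)^{\downarrow i} = n^i \prod_{j=0}^{i-1}\Big(x - \tfrac{j}{n}\Big), \qquad (n - nx)^{\downarrow(k-i)} = n^{k-i}\prod_{j=0}^{k-i-1}\Big(1-x-\tfrac{j}{n}\Big), \qquad n^{\downarrow k} = n^k\prod_{j=0}^{k-1}\Big(1-\tfrac{j}{n}\Big),
$$
each of which follows by pulling a factor of $n$ out of every term of the product.

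Substituting these into the definition of $b_{i,k,n}^*$, the powers of $n$ cancel exactly, since $n^{i}\cdot n^{k-i}/n^{k}=1$, and I am left with
$$
b_{i,k,n}^*(x) = \binom{k}{i}\, \frac{\displaystyle\prod_{j=0}^{i-1}\big(x - \tfrac{j}{n}\big)\ \prod_{j=0}^{k-i-1}\big(1-x-\tfrac{j}{n}\big)}{\displaystyle\prod_{j=0}^{k-1}\big(1-\tfrac{j}{n}\big)}.
$$
The second step is then to let $n\to\infty$. Each factor here is an affine function of $x$, and the factors $x-\tfrac{j}{n}$, $1-x-\tfrac{j}{n}$, and $1-\tfrac{j}{n}$ converge to $x$, $1-x$, and $1$ respectively, uniformly in $x\in[0,1]$ (the error in each is at most $(k-1)/n$). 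Since there are only finitely many such factors — at most $k$ in each product — and all are uniformly bounded on $[0,1]$, the finite products converge uniformly and the denominator tends to $1$, giving
$$
b_{i,k,n}^*(x) \longrightarrow \binom{k}{i}\, x^i (1-x)^{k-i} = b_{i,k}(x).
$$

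I do not expect any genuine obstacle in this argument; the whole content is the power-counting that makes the factors of $n$ cancel. The only point meriting a word of care is the mode of convergence, since the ambient space $C[0,1]$ carries the supremum norm. This causes no difficulty: $b_{i,k,n}^*$ is a polynomial of fixed degree $k$ whose coefficients converge to those of $b_{i,k}$, so the convergence is automatically uniform on the compact interval $[0,1]$, which is exactly the convergence required for its use in the subsequent verification of the hypothesis of Theorem \ref{markovConvergenceFromIntertwining}.
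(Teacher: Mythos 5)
Your proof is correct and follows essentially the same route as the paper's: factor $n^i$, $n^{k-i}$, and $n^k$ out of the falling factorials, cancel the powers of $n$, and pass to the limit factor by factor in the resulting finite products (the paper keeps the constant $n^k/n^{\downarrow k}$ intact rather than expanding the denominator as $\prod_{j=0}^{k-1}(1-j/n)$, but this is a cosmetic difference). Your explicit remark that the convergence is uniform on $[0,1]$ — needed since the ambient norm is the supremum norm — is a point the paper leaves implicit, and it is handled correctly.
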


\begin{proof}

	We write
	\begin{align*}
    	b_{ i, k, n }^* ( x )
    		& =
    			\binom{ k }{ i }
    			\frac{ 1 }{ n^{ \downarrow k } }
    			\prod_{ r = 0 }^{ i - 1 }
					( n x - r )
				\prod_{ s = 0 }^{ k - i - 1 }
	    			( n - n x - s )
			\\
    		& =
    			\binom{ k }{ i }
    			\frac{ n^k }{ n^{ \downarrow k } }
    			\prod_{ r = 0 }^{ i - 1 }
					\left( 
						x - \frac{ r }{ n } 
					\right)
				\prod_{ s = 0 }^{ k - i - 1 }
	    			\left( 
							1 - x - \frac{ s }{ n } 
					\right)
			,
	\end{align*}
	
	\noindent
	and handle each factor separately. 
	The constants $ \frac{ n^k }{ n^{ \downarrow k } } $ converge to $ 1 $ and each factor in a product converges to either $ u( x ) = x $ or $ v( x ) = 1 - x $.
\end{proof}

\begin{prop}
	\label{intertwiningOperatorAndProjectionConvergence}

	Let $ f \in \mc H $ and
	fix $ m \ge 1 $ such that $ f \in \mc H_m $.
	Then we have the convergence
	$$
    	(
			\bernsteinKernel_n^{ -1 }
		-	\rho_n
		)
		f
				\xrightarrow[ n \to \infty ]{ n \ge m }
					0
	$$

	\noindent
	in the sense of Definition \ref{convergenceInDifferentSpaces}.
\end{prop}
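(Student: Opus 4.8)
The plan is to unpack the definition of convergence and then reduce the entire statement to the uniform convergence already established in Proposition~\ref{degenerateBernsteinConvergence}, using linearity. By Definition~\ref{convergenceInDifferentSpaces} applied with limit $0$ (for which the relevant projection satisfies $\pi_n 0 = 0$), the asserted convergence is exactly the statement that
$$
	\norm{ \bernsteinKernel_n^{-1} f - \rho_n f }_{ C[n] }
		\xrightarrow[ n \to \infty ]{}
			0
	.
$$
Here $ \bernsteinKernel_n^{-1} f $ is well-defined for $ n \ge m $, since $ f \in \mc H_m \subseteq \mc H_n = \text{range } \bernsteinKernel_n $ and $ \bernsteinKernel_n $ is injective by Proposition~\ref{bernsteinKernelAndFiltration}.

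First I would fix the basis of $ \mc H_m $ adapted to our preimage formula. By (\ref{rangeOfBernsteinKernel}), the functions $ g_i = b_{ i, m } + \leftMostColDistSym_m( i ) \, b_{ 0, m } $ for $ 1 \le i \le m $ span $ \mc H_m $, and since $ \dim \mc H_m = m $ (Proposition~\ref{propertiesOfB}\ref{eigenbasisOfH}) they form a basis. Writing $ f = \sum_{ i = 1 }^m c_i g_i $, the advantage of this basis is that Proposition~\ref{preimagesOfBernsteinBasisOfH}, applied with $ k = m $ (valid since $ n \ge m \ge i \ge 1 $), supplies the exact preimages
$$
	\bernsteinKernel_n^{-1} g_i
		=
			\rho_n \big( b_{ i, m, n }^* + \leftMostColDistSym_m( i ) \, b_{ 0, m, n }^* \big)
	.
$$

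Using linearity of both $ \bernsteinKernel_n^{-1} $ and $ \rho_n $, the difference then collapses to
$$
	\bernsteinKernel_n^{-1} f - \rho_n f
		=
			\sum_{ i = 1 }^m
				c_i \, \rho_n
				\Big( ( b_{ i, m, n }^* - b_{ i, m } ) + \leftMostColDistSym_m( i ) ( b_{ 0, m, n }^* - b_{ 0, m } ) \Big)
	.
$$
Since $ \rho_n $ merely restricts a function to the grid $ \{ j/n \} \subseteq [ 0, 1 ] $, it is a contraction for the supremum norm, i.e.\ $ \norm{ \rho_n h }_{ C[n] } \le \norm{ h }_{ C[0,1] } $. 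Applying this bound together with the triangle inequality reduces the claim to showing that each term $ \norm{ b_{ i, m, n }^* - b_{ i, m } }_{ C[0,1] } \to 0 $, which is precisely Proposition~\ref{degenerateBernsteinConvergence}.

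I expect no genuine obstacle here, as the substantive content has been isolated into the earlier propositions; the remaining work is careful assembly. The one point that deserves emphasis is that $ m $ is fixed independently of $ n $, so the sum has a bounded number of terms and the finitely many uniform convergences combine into a single one; were $ m $ permitted to grow with $ n $, the conclusion would fail. I would also note that the coefficients $ c_i $ depend only on $ f $ and $ m $, not on $ n $, so they do not interfere with the limit.
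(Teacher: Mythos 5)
Your proposal is correct and follows essentially the same route as the paper's proof: both reduce by linearity to the spanning functions $b_{i,k} + \distLeftMostCol^{(\alpha,\alpha)}_k(i)\, b_{0,k}$, invoke Proposition~\ref{preimagesOfBernsteinBasisOfH} to write the exact preimage under $\bernsteinKernel_n$ as $\rho_n$ of the corresponding degenerate Bernstein combination, and then conclude from the uniform boundedness of $\rho_n$ together with Proposition~\ref{degenerateBernsteinConvergence}. Your added remarks (well-definedness of $\bernsteinKernel_n^{-1}f$ for $n \ge m$, and the importance of $m$ being fixed) are correct and merely make explicit what the paper leaves implicit.
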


\begin{proof}
	
	It suffices to consider the case when
	$ 
		f = 
            	b_{ i, k }
        	+	\leftMostColDistSym_k( i )
        		b_{ 0, k }
	$
	for some $ i $ and $ k $ satisfying $ 1 \le i \le k $.
	Defining
	$
		f_n
			=
        		b_{ i, k, n }^*
        	+	\leftMostColDistSym_k( i )
        		b_{ 0, k, n }^*
	$
	for $ n \ge 1 $,
	it follows from Proposition \ref{preimagesOfBernsteinBasisOfH} that
	\begin{align*}
    	(
			\bernsteinKernel_n^{ -1 }
		-	\rho_n
		)
		f
				& =
                    	\rho_n
        				(
    						f_n
    					-	f
    					)
				.
	\end{align*}
		
	\noindent
	Since the $ \rho_n $ are uniformly bounded, the result follows from Proposition \ref{degenerateBernsteinConvergence}.
\end{proof}

\section{Semigroup Relations from Generator Relations}
\label{section semigroup relation from generator relation}

In this section, we provide general conditions under which commutation relations involving generators lead to the corresponding relations for their semigroups.

\begin{theorem}

    Let $ \generalGenerator $ and $ \generalGeneratorTwo $ be the generators of the Feller semigroups $ \generalSemigroup $ and $ \generalSemigroupTwo $, respectively, and let $ \generalDomain $ and $ \generalDomainTwo $ denote their respective domains.
    Suppose that there is a subspace
    $ 
    	\generalSubspaceOfDomain 
    		\subset 
    			\generalDomain 
	$,
    a linear operator 
    $ 
    	L \colon \, \barr{ \! \generalSubspaceOfDomain } \to \barr{ \generalDomainTwo }
	$, 
	and a set 
	$ 
		\setOfLambdaValues 
			\subset 
				( 0, \infty ) 
	$ 
	such that
    \begin{enumerate}[ label = (\roman*) ]
    
		\item
		$ L $ is bounded,

		\item
		$ \setOfLambdaValues $ is unbounded,

    	\item
    	$ 
			\generalSubspaceOfDomain 
    			\subset 
    				( \lambda - \generalGenerator ) \generalSubspaceOfDomain 
		$ 
		for $ \lambda \in \setOfLambdaValues $, and

    	\item
    	$ 
			L
			\generalGenerator
				= 
					\generalGeneratorTwo
					L
		$ 
		on $ \generalSubspaceOfDomain $.
    
    \end{enumerate}

    \noindent
    Then 
	$
		L
		\generalSemigroup 
			= 
				\generalSemigroupTwo
				L
	$
	on $ \, \barr{ \! \generalSubspaceOfDomain } $ for each $ t \ge 0 $.

	\label{thmSemigroupRelationFromGenerator}
\end{theorem}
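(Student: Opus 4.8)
The plan is to pass from the generator relation (iv) to a resolvent relation, iterate it, and then recover the semigroups through the Yosida approximation; the unboundedness of $I$ is precisely what allows $\lambda \to \infty$ in the final step. Throughout I write $R_\lambda^A = (\lambda - A)^{-1}$ and $R_\lambda^B = (\lambda - B)^{-1}$.

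First I would record that, since $A$ and $B$ generate Feller (hence contraction) semigroups, every $\lambda > 0$ lies in both resolvent sets, so $R_\lambda^A$ and $R_\lambda^B$ are bounded with norm at most $\lambda^{-1}$. Hypothesis (iii) says exactly that for $\lambda \in I$ and $f \in E$ there is $g \in E$ with $(\lambda - A)g = f$; as $g \in E \subset \mathcal E = \mathrm{Dom}(A)$ and $\lambda$ is in the resolvent set, $g = R_\lambda^A f$, so in fact $R_\lambda^A(E) \subset E$. Applying $L$ to $(\lambda - A)g = f$ and invoking (iv) on $g \in E$ (which presupposes $L(E) \subset \mathcal F$) gives $(\lambda - B)Lg = Lf$, whence $L R_\lambda^A f = Lg = R_\lambda^B L f$. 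Thus $L R_\lambda^A = R_\lambda^B L$ on $E$, and since all three operators are bounded, this extends to $\overline E$ for every $\lambda \in I$.

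Next I would iterate. Because $R_\lambda^A$ preserves $E$, an induction on $n$ built on the relation just proved yields $L (R_\lambda^A)^n = (R_\lambda^B)^n L$ on $E$, and again by boundedness on $\overline E$, for all $n \ge 1$ and $\lambda \in I$. The Yosida approximants $A_\lambda = \lambda^2 R_\lambda^A - \lambda\,\mathbf 1$ and $B_\lambda = \lambda^2 R_\lambda^B - \lambda\,\mathbf 1$ are bounded, and
$$
	e^{t A_\lambda}
		=
			e^{-\lambda t}
			\sum_{n \ge 0}
				\frac{ (t \lambda^2)^n }{ n! }
				( R_\lambda^A )^n,
$$
with the analogous expansion for $e^{t B_\lambda}$. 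As $L$ is bounded it commutes with these norm-convergent series, so the intertwining of powers gives $L\, e^{t A_\lambda} = e^{t B_\lambda}\, L$ on $\overline E$ for each $\lambda \in I$ and $t \ge 0$.

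Finally I would invoke the classical convergence of the Yosida approximation for Feller generators: $e^{t A_\lambda} h \to V_t h$ and $e^{t B_\lambda} h \to W_t h$ as $\lambda \to \infty$, for every $h$ and uniformly for $t$ in bounded intervals. Since $I$ is unbounded I may let $\lambda \to \infty$ through $I$; fixing $f \in \overline E$ and using continuity of $L$,
$$
	L V_t f
		=
			\lim_{ \lambda \to \infty,\ \lambda \in I }
				L\, e^{t A_\lambda} f
		=
			\lim_{ \lambda \to \infty,\ \lambda \in I }
				e^{t B_\lambda}\, L f
		=
			W_t L f,
$$
which is the asserted identity on $\overline E$. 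I expect the only real obstacle to be the bookkeeping at the resolvent stage: verifying that (iii) genuinely produces an element of $E$ (not merely of $\mathcal E$), so that (iv) may be applied to it, and that $Lg$ lands in $\mathcal F = \mathrm{Dom}(B)$ so that $(\lambda - B)Lg$ is meaningful. Once this is in place, the iteration and the Yosida limit are routine consequences of the boundedness of $L$ and standard semigroup theory.
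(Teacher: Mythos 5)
Your proposal is correct and follows essentially the same route as the paper: hypothesis (iii) gives invariance of $E$ under the resolvent, (iv) yields the resolvent intertwining $L R_\lambda^{A} = R_\lambda^{B} L$, this passes to the Yosida approximants and their exponential series via the boundedness of $L$, and the unboundedness of $I$ lets $\lambda \to \infty$ to recover $L V_t = W_t L$, extended to $\overline{E}$ by continuity of $L$. The only cosmetic difference is that you iterate powers of the resolvents inside the expansion $e^{tA_\lambda} = e^{-\lambda t}\sum_{n\ge 0}\frac{(t\lambda^2)^n}{n!}(R_\lambda^A)^n$, whereas the paper iterates powers of the Yosida approximants $A_\lambda$ directly; these are equivalent bookkeeping choices.
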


\begin{proof}

    Fix $ \lambda \in \setOfLambdaValues $ 
    and let $ R_\lambda^{ \generalGenerator } $ and $ R_\lambda^{ \generalGeneratorTwo } $ 
    be the resolvent operators corresponding to $ \generalGenerator $ and $ \generalGeneratorTwo $ respectively. 
    It follows from (iii) that $ \generalSubspaceOfDomain $ is invariant under $ R_{ \lambda }^{ \generalGenerator } $.
    Combining this with (iv), we obtain the following relation on $ \generalSubspaceOfDomain $:
    \begin{align*}
    	R_\lambda^{ \generalGeneratorTwo }
    	L
    		& =
            	R_\lambda^{ \generalGeneratorTwo }
            	L
            	( \lambda - \generalGenerator ) 
            	R_\lambda^{ \generalGenerator }
    		\\
    		& =
            	R_\lambda^{ \generalGeneratorTwo }
            	( \lambda - \generalGeneratorTwo )
            	L
    			R_\lambda^{ \generalGenerator }
    		\\
    		& =
            	L
    			R_\lambda^{ \generalGenerator }
			.
    \end{align*}

    \noindent
	It then follows easily that
    $$ 
    	L
		\lambda 
    	( 
    		\lambda 
    		R_\lambda^{ \generalGenerator } 
    	- 	I 
    	)
    		=
            	\lambda 
            	( 
            		\lambda 
            		R_\lambda^{ \generalGeneratorTwo } 
            	- 	I 
            	)
				L
			\quad
			\text{on } \generalSubspaceOfDomain
			,
    $$
    
    \noindent
	or equivalently, 
    $ 
    	L
		\generalGenerator_\lambda 
    		= 
				\generalGeneratorTwo_\lambda 
				L
    $
    on $ \generalSubspaceOfDomain $, where $ \generalGenerator_\lambda $ and $ \generalGeneratorTwo_\lambda $ are the Yosida approximations of $ \generalGenerator $ and $ \generalGeneratorTwo $ respectively.
	Noting that $ \generalSubspaceOfDomain $ is invariant under $ \generalGenerator_\lambda $, this extends to nonnegative integers $ k $:
	$$ 
    	L
		\generalGenerator_\lambda^k 
    		= 
    			\generalGeneratorTwo_\lambda^k
				L
			\quad
			\text{on } \generalSubspaceOfDomain
			.
	$$

    \noindent
	Applying now (i), we have for $ f \in \generalSubspaceOfDomain $ and $ t \ge 0 $ the identity
    \begin{align*}
		L
		e^{ t \generalGenerator_{ \lambda } }
		f
    		& =
    			L
				\sum_{ k = 0 }^\infty
    				\frac{ t^k }{ k! }
    				( \generalGenerator_\lambda^k f )
    		\\
    		& =
    			\sum_{ k = 0 }^\infty
    				\frac{ t^k }{ k! }
    				( L \generalGenerator_\lambda^k f )
    		\\
    		& =
    			\sum_{ k = 0 }^\infty
    				\frac{ t^k }{ k! }
    				( \generalGeneratorTwo_\lambda^k L f )
    		\\
    		& =
    			e^{ t \generalGeneratorTwo_{ \lambda } }
    			L
				f
    		.
    \end{align*}

	\noindent
	Letting $ \lambda $ become arbitrarily large (see (ii)) yields
	$
		L
		\generalSemigroup 
		f
			= 
				\generalSemigroupTwo
				L
				f
			.
    $ 
    This establishes the result on $ \generalSubspaceOfDomain $. 
	The extension to $ \, \barr{ \! \generalSubspaceOfDomain } $ follows from the boundedness of $ L $.
\end{proof}

\begin{corollary}

	\label{generator to semigroup relation with finite filtration}
    Let $ \generalGenerator $ and $ \generalGeneratorTwo $ be the generators of the Feller semigroups $ \generalSemigroup $ and $ \generalSemigroupTwo $, respectively, and let $ \generalDomain $ and $ \generalDomainTwo $ denote their respective domains.
    Suppose that there is a subspace
    $ 
    	\generalSubspaceOfDomain 
    		\subset 
    			\generalDomain 
	$,
    a linear operator 
    $ 
    	L \colon \generalSubspaceOfDomain \to \barr{ \! \generalDomainTwo }
	$, 
	and a filtration of $ \generalSubspaceOfDomain $ by finite dimensional spaces
	$ 
		\{ \generalSubspaceOfDomain_k \}_{ k \ge 1 }
	$ 
	such that
    \begin{enumerate}[ label = (\roman*) ]
    
        \item	
    	$ 
			\generalGenerator
			\generalSubspaceOfDomain_k
    			\subset 
    				\generalSubspaceOfDomain_k
		$
		for all $ k $, and

    	\item
    	$ 
			L
			\generalGenerator
				= 
					\generalGeneratorTwo
					L
		$ 
		on $ \generalSubspaceOfDomain $.
    
    \end{enumerate}

    \noindent
    Then 
	$
		L
		\generalSemigroup 
			= 
				\generalSemigroupTwo
				L
	$
	on $ \generalSubspaceOfDomain $ for each $ t \ge 0 $.

\end{corollary}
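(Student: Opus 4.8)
The plan is to deduce the corollary from Theorem \ref{thmSemigroupRelationFromGenerator} by applying that result separately on each finite-dimensional piece $E_k$ of the filtration, and then taking the union over $k$. The key observation is that a finite-dimensional $A$-invariant subspace supplies, essentially for free, both the boundedness of $L$ and the unbounded set of resolvent parameters that the theorem demands; the invariance hypothesis (i) is exactly what converts those abstract requirements into automatic consequences.

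First I would fix $k$ and regard $A$ as a linear endomorphism of $E_k$, which is legitimate because hypothesis (i) gives $A E_k \subset E_k$. Since $E_k$ is finite-dimensional it is closed, so $\overline{E_k} = E_k$, and the restriction $L|_{E_k}$ is automatically a bounded operator into $\overline{\mathcal{F}}$; this yields condition (i) of the theorem. For the resolvent condition I would let $I_k$ be the set of $\lambda > 0$ that are not real eigenvalues of the endomorphism $A|_{E_k}$. By rank--nullity on the finite-dimensional space $E_k$, for each such $\lambda$ the map $\lambda - A|_{E_k}$ is a bijection of $E_k$, so $(\lambda - A) E_k = E_k$, giving condition (iii); and since $A|_{E_k}$ has at most $\dim E_k$ real eigenvalues, $I_k$ is cofinite in $(0,\infty)$, hence unbounded, giving condition (ii). Finally, condition (iv), $L A = B L$ on $E_k$, is just the restriction to $E_k$ of hypothesis (ii) of the corollary.

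With all four hypotheses of Theorem \ref{thmSemigroupRelationFromGenerator} verified for the triple $(E_k, L|_{E_k}, I_k)$, I would conclude $L V_t = W_t L$ on $\overline{E_k} = E_k$ for every $t \ge 0$. Because $\{ E_k \}_{k \ge 1}$ filters $E$, every $f \in E$ lies in some $E_k$, so the relation $L V_t = W_t L$ holds on all of $E$, as desired.

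The only genuine subtlety — and the step I would be most careful about — is confirming that the hypotheses of the theorem remain meaningful when applied to $E_k$ rather than to $E$. One must check that $A E_k \subset E_k \subset E$ keeps $L A f$ defined for $f \in E_k$, and that $L E_k \subset \mathcal{F}$ so that $B L f$ is defined; the latter is already implicit in hypothesis (ii) of the corollary, which presumes $L A = B L$ is a well-formed identity on $E$. Everything else is a routine finite-dimensional linear-algebra verification, so I do not anticipate any analytic difficulty beyond correctly invoking the earlier theorem.
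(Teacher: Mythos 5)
Your proof is correct and takes essentially the same route as the paper: restrict to each finite-dimensional $E_k$, verify the hypotheses of Theorem \ref{thmSemigroupRelationFromGenerator} there (boundedness of $L|_{E_k}$ and closedness of $E_k$ coming for free from finite-dimensionality), and take the union over $k$. The only minor difference is in verifying the resolvent condition: the paper observes that $\lambda - A$ is injective for \emph{every} $\lambda > 0$ (a standard consequence of $A$ generating a Feller semigroup), so finite-dimensionality gives $(\lambda - A)E_k = E_k$ for all $\lambda>0$, whereas you instead exclude the finitely many real eigenvalues of $A|_{E_k}$ to get a cofinite, hence unbounded, set of admissible $\lambda$ — both verifications are valid.
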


\begin{proof}

Let $ k \ge 1 $.
It follows from (i) that $ \generalSubspaceOfDomain_k $ is invariant under the injective operators 
\linebreak[4]
$ 
	\{
	\lambda - \generalGenerator 
	\}_{ \lambda > 0 }
$.
Together with the fact that $ \generalSubspaceOfDomain_k $ is finite-dimensional, this implies that
$$
	( \lambda - \generalGenerator )
	\generalSubspaceOfDomain_k
		=
			\generalSubspaceOfDomain_k
		,
		\quad
		\lambda > 0 
		.
$$

\noindent
Letting
$ 
	L_k
		\colon
			\generalSubspaceOfDomain_k \to \, \barr{ \! \generalDomainTwo }
$
denote the restriction of $ L $ to $ \generalSubspaceOfDomain_k $, it follows from (i) and (ii) that
$$ 
	L_k
	\generalGenerator
		= 
			\generalGeneratorTwo
			L_k
		\quad
		\text{on }
		\generalSubspaceOfDomain_k
		.
$$

\noindent
Since $ \generalSubspaceOfDomain_k $ is finite-dimensional, $ L_k $ is bounded and 
$ 
	\, \barr{ \! \generalSubspaceOfDomain_k }
		=
			\generalSubspaceOfDomain_k
		.
$
Applying Theorem \ref{thmSemigroupRelationFromGenerator}, we find that
$
	L
	\generalSemigroup 
		= 
			\generalSemigroupTwo
			L
$
on $ \generalSubspaceOfDomain_k $ for each $ t \ge 0 $.
Taking a union over $ k $ extends the identity to $ \generalSubspaceOfDomain $.
\end{proof}

\section{Proofs of Main Results}
\label{section proofs of main results}

\begin{proof}[Proof of Theorem \ref{full statement of all intertwining results}]

	The first claim was proved in Proposition \ref{propIntertwining}.
	For the second claim, we appeal to
	Corollary \ref{generator to semigroup relation with finite filtration}.
	We take
	$ \generalGenerator = n ( n + 1 ) ( \transKernelLeftMostCol_n^{ ( \alpha, 0 ) } - \mb 1 ) $,
	$ \generalGeneratorTwo = \mc L $, 
	$ L = \bernsteinKernel_n $, and
	$ \generalSubspaceOfDomain = C( [ n ] ) = \generalSubspaceOfDomain_k $ for all $ k $.
	The containment 
	$ 
		\generalGenerator
		\generalSubspaceOfDomain_k
			\subset 
				\generalSubspaceOfDomain_k
	$
	holds trivially and the identity 
	$ 
		L
		\generalGenerator
			= 
				\generalGeneratorTwo
				L
	$ 
	was established in Proposition \ref{leftMostChainAndLimitIntertwining}.
	Applying Corollary \ref{generator to semigroup relation with finite filtration}, we obtain the desired identity in terms of transition operators, which implies the same relation in terms of transition kernels.
\end{proof}

\begin{proof}[Proof of Theorem \ref{full statement of convergence result}]
The claim about the existence of initial distributions for $\upDownChainBiased_n$ follows from Theorem \ref{full statement of all intertwining results}.
	The second claim follows from applying Theorem \ref{markovConvergenceFromIntertwining} with 
	$ E = [ 0, 1 ] $,
	$ A = \mc L $,
	$ D = \mc H $,
	$ E_n = [ n ] $,
	$ Z_n = \leftMostColChain_n $,
	$ \gamma_n(j) = \frac{ j }{ n } $,
	$ D_n = \mc H_n $, 
	$ L_n = \bernsteinKernel_n^{ -1 } $,
	$ \delta_n^{ -1 } = n ( n + 1 ) $,
	and
	$ \eps_n^{ -1 } = n^2 $.
	To verify that 
    	$ A $ is the generator of a conservative Feller semigroup on $ C[ 0, 1 ] $, 
    	$ D $ is a core for $ A $, 
    	and 
		$ D $ is invariant under $ A $, 
	we appeal to Propositions 
		\ref{description of generator and domain}, 
		\ref{existence of the generator}, 
		and 
		\ref{propertiesOfB}. 
	Condition \ref{generator relation in general result} can be obtained from 
		the identity in Proposition \ref{leftMostChainAndLimitIntertwining} by recalling that 
		$ \bernsteinKernel_n $ is injective (see Proposition \ref{bernsteinKernelAndFiltration}) 
		and 
		that each $ f $ in $ D = \mc H $ lies in $ D_n = \mc H_n $ for large $ n $.
	Condition \ref{convergence condition in general result} is exactly the result of Proposition \ref{intertwiningOperatorAndProjectionConvergence}.
\end{proof}

\begin{proof}[Proof of Theorem \ref{thm main}]
    Define $\iota : \compSet \to \setOfOpenSets$ by 
    $$
    	\iota( \sigma ) 	
    		= 	
    				\bigg( 
    						0, 
    						\frac{ \sigma_1 }{ | \sigma | } 
    				\bigg ) 	
    			\cup 	
    				\bigg( 
    						\frac{ \sigma_1 }{ | \sigma | }, 
    						\frac{ \sigma_1 + \sigma_2 }{ | \sigma | } 
    				\bigg) 	
    			\cup 
    				\ldots 
    			\cup	
    				\bigg( 
    						\frac{ 
    								| \sigma | - \sigma_{ \ell( \sigma ) }
    							}{ 
    								| \sigma | 
    							}, 
    						1 
    				\bigg)
    	.
    $$
    From \cite[Theorem 1.3]{krdr2020}, we have that if
    \[ \iota(\upDownChain_n(0))  \Longrightarrow \upDownChain(0),\]
    then
    \[ \left(\iota(\upDownChain_n(\lfloor n^2t\rfloor ))\right)_{t\geq 0} \Longrightarrow \left(\upDownChain(t)\right)_{t\geq 0},\]
    where $\lfloor a \rfloor$ is the integer part of $a$ and the convergence is in distribution on the Skorokhod space $D([0,\infty),\setOfOpenSets)$, where the metric on $\setOfOpenSets$ is given by the Hausdorff distance between the complements (complements being taken in $[0,1]$).  If $\xi$ were continuous, the result would follow immediately, but $\xi$ is discontinuous.  However, it is straightforward to show that if $u_n\to u$ in $\setOfOpenSets$ and $\xi(u_n) \to c >0$, then $\xi(u)=c$.  
 
 Assuming now that $\upDownChainBiased_n$ is running in stationarity, the fact that $\iota(\upDownChainBiased_n(0))$ converges in distribution to an $(\alpha,0)$ Poisson-Dirichlet interval partition distribution follows from \cite{PitmWink09} and the fact that $\phi(\upDownChainBiased_n)$ is a Markov chain follows from Theorem \ref{full statement of all intertwining results}.  Observe that $(\upKernelBiased)^{n-1}((1), \cdot)$ is the stationary distribution of $\upDownChainBiased_n$ and, in the $(\alpha,0)$ ordered Chinese Restaurant Process growth step, no new table is ever created at the start of the list.  
    Thus, for every $k$, $\phi(\upDownChainBiased_n(k))$ is distributed like the size of the table containing $1$ in the usual $(\alpha,0)$ Chinese Restaurant Process after $n$ customers are seated, see \cite{CSP}.  
    Consequently, since our chain is stationary, for each $t$,  
    \[ \frac{1}{n}\phi(\upDownChainBiased_n(\lfloor n^2t\rfloor )) = \xi(\iota(\upDownChainBiased_n(\lfloor n^2t\rfloor )))=_d \xi(\iota(\upDownChainBiased_n(0))) \Rightarrow W,\]
    where $W$ has a Beta$(1-\alpha,\alpha)$ distribution, see \cite{CSP}.
   
   Therefore, from Theorem \ref{full statement of convergence result} with $F$ as defined there and $F(0)=_d W$, passing to a subsequence if necessary, and using the Skorokhod representation theorem, we may assume that
    \[ \left( \left(\iota(\upDownChainBiased_n(\lfloor n^2t\rfloor )), \xi(\iota(\upDownChainBiased_n(\lfloor n^2s\rfloor )))\right)\right)_{t,s\geq 0} \overset{a.s.}{\longrightarrow} \left( (\upDownChainBiased(t),F(s))\right)_{t,s\geq 0} \]
    in $D([0,\infty),\setOfOpenSets) \times D([0,\infty), [0, 1])$.
     Fix $t\geq 0$.  Since Feller processes have no fixed discontinuities, $F$ is almost surely continuous at $t$ and, therefore, since convergence in $ D([0,\infty), \setOfOpenSets)$ implies convergence at continuity points, 
    \[\xi(\iota(\upDownChainBiased_n(\lfloor n^2t\rfloor ))) \overset{a.s.}{\longrightarrow} F(t).\]
    Since $ F(t)=_d W$, $\prob(F(t)>0)=1$ and, since 
    \[ \iota(\upDownChainBiased_n(\lfloor n^2t\rfloor ))  \overset{a.s.}{\longrightarrow}  \upDownChainBiased(t),\]
    it follows that $F(t) = _{a.s.}  \xi(\upDownChainBiased(t))$. Consequently, $F(t)$ is a modification of $\xi(\upDownChainBiased(t))$ and since $F$ has a Feller semigroup, so does $\xi(\upDownChainBiased)$.
\end{proof}

\bibliographystyle{plain}      
	\bibliography{LMocrpReferences}

\end{document}